\documentclass[11pt,reqno]{amsart}
\usepackage[margin=1in,letterpaper]{geometry}
\usepackage{graphicx}
\usepackage{amssymb}
\usepackage{amsthm}
\usepackage{mathrsfs}
\usepackage{stmaryrd}
\usepackage{accents} 
\usepackage{enumitem} 
\usepackage{microtype}
\usepackage{xcolor}
\usepackage{colonequals}

\usepackage[bookmarksopen,bookmarksdepth=2]{hyperref} 

\allowdisplaybreaks

\makeatletter\let\over\@@over\makeatother

\numberwithin{equation}{section}
\theoremstyle{plain} 
\newtheorem{theorem}{Theorem}[section] 
\newtheorem{proposition}[theorem]{Proposition} 
\newtheorem{corollary}[theorem]{Corollary}
\newtheorem{lemma}[theorem]{Lemma} 
\theoremstyle{remark}
\newtheorem{remark}[theorem]{Remark}
\theoremstyle{definition}

\newcommand{\be}{\begin{equation}}
\newcommand{\ee}{\end{equation}}%
\newcommand{\bse}{\begin{subequations}}
\newcommand{\ese}{\end{subequations}}

\newcommand{\dist}{\operatorname{dist}}
\newcommand{\realpart}{\operatorname{Re}}
\newcommand{\imagpart}{\operatorname{Im}}

\newcommand{\signum}[1]{\operatorname{sgn}{#1}}

\newcommand{\range}{\operatorname{rng}}
\newcommand{\kernel}{\operatorname{ker}}
\newcommand{\linspan}{\operatorname{span}}

\newcommand{\id}{\operatorname{id}}

\newcommand{\placeholder}{\,\cdot\,}

\newcommand{\n}[2][]{#1\lVert #2 #1\rVert}

\newcommand{\dell}{\partial}

\newcommand{\bdd}{\mathrm{b}}       
\newcommand{\loc}{{\mathrm{loc}} } 
\newcommand{\even}{{\mathrm{e}} }         
\newcommand{\odd}{{\mathrm{o}} }        
\newcommand{\per}{\mathrm{per}}  
\newcommand{\het}{\mathrm{f}}

\newcommand\G{\mathscr G}    

\newcommand\F{\mathscr F}    

\newcommand\cwave{c_{\mathrm{w}}}

\newcommand\rhos{\rho_{\mathrm{s}}}

\newcommand\Wspace{\mathscr W}
\newcommand\Xspace{\mathscr X}
\newcommand\Yspace{\mathscr Y}
\newcommand\Zspace{\mathscr Z}

\newcommand{\cm}{{\mathscr C}}  
       
\newcommand{\km}{{\mathscr K}}            

\newcommand\fluidD{\mathscr{D}}
\newcommand\fluidS{\mathscr{S}}
\newcommand{\fluidB}{\mathscr{B}}

\newcommand{\Nconf}{N_{\mathrm{c}}}
\newcommand{\Nvel}{N_{\mathrm{v}}}

\newcommand{\confD}{\mathcal{D}}
\newcommand{\confS}{\Gamma_1}
\newcommand{\confB}{\Gamma_0}

\newcommand{\sol}{\mathrm{sol}}

\newcommand{\vfield}{\mathbf{v}}    
\newcommand{\qfield}{\mathbf{q}}
\newcommand{\ellfunc}{\mathcal{E}}

\begin{document}

\title[Compressible steady water waves]{Compressible steady water waves}

\date{\today}

\author[A. Ramadan]{Abba Ramadan}
\address{Department of Mathematics, University of Alabama, Tuscaloosa, AL 35401} 
\address{Faculty Affiliate, CONSERVE Center, Alabama Water Institute, The University of Alabama, Tuscaloosa, AL, USA}
\email{aramadan@ua.edu} 
\author[S. Walsh]{Samuel Walsh}
\address{Department of Mathematics, University of Missouri, Columbia, MO 65211} 
\email{walshsa@missouri.edu}

\begin{abstract}
In this paper, we consider the existence of small- and large-amplitude compressible water waves. These are solutions of the two-dimensional free boundary isentropic Euler equations subjected to a constant gravitational force and neglecting the effects of surface tension. We study periodic and solitary waves beneath vacuum as well as front-type solutions of the two-phase problem in a channel. Using local and global bifurcation techniques, we construct large families of these waves beginning at an incompressible solution and treating the Mach number (defined in a suitable sense) as the bifurcation parameter. This gives a rigorous version of the classical Janzen--Rayleigh expansion, but also furnishes solution curves that may limit to transsonic flow.
\end{abstract}

\maketitle

\setcounter{tocdepth}{1}
\tableofcontents

\section{Introduction}
\label{introduction section}

Waves in water are typically assumed to be incompressible on the grounds that the characteristic velocity is quite small relative to the speed of sound.  However, there are certain physical situations in which even slight compressible effects can play a significant role. In pioneering work, for example, Longuet-Higgins~\cite{longuet1950theory} attributed the creation of seabed microseisms to them, which was later corroborated by field observations~\cite{kedar2008origin}. More recently, a number of authors have studied (linear) instabilities created by the resonant interactions between hydrodynamic gravity waves and acoustic waves; see, for example, \cite{kadri2013generation,kadri2016resonant,yang2018steady,kadri2021approximate,andrade2025acoustic}. Omira et al.~\cite{omira2022global} have argued that this dynamic is partially responsible for the surprising reach of the meteotsunami created by the Hunga Tonga-Hunga Ha'apai volcanic eruption in 2022.

All of these works, however, treat weakly nonlinear models and rely on formal methods.  It is somewhat surprising that in the over $75$ years since it was proposed by Longuet-Higgins, there have been no rigorous results on traveling wave solutions to the full isentropic Euler system. The objective of this paper is to use modern analysis techniques from local and global bifurcation theory to construct a plethora of exact compressible water waves. We will see that this can be done readily by exploiting the quite well-developed literature on irrotational incompressible waves.

\subsection{Governing equations}

\label{governing equations section}

For now, consider a single inviscid fluid occupying the time-dependent domain $\fluidD(t) \subset \mathbb{R}^2$; in Section~\ref{front section} we discuss a two-fluid problem.  By convention we write $(X,Y)$ for the spatial coordinates in the lab frame. For simplicity, we assume that the fluid is bounded below by a rigid, purely horizontal bed $\fluidB$, with axes chosen so that it is at $Y = -d$ for some depth $d >0$. The upper boundary of $\fluidD(t)$ is free and denoted by $\fluidS(t)$. Note that we will not assume a priori that $\fluidS(t)$ is the graph of a single-valued function of $x$. Above the fluid lies a region of constant pressure, normalized to be $0$.   

We will assume that the flow in $\fluidD(t)$ is irrotational but compressible and \emph{isentropic}, meaning that the density $\rho = \rho(t,\placeholder) \colon \overline{\fluidD(t)} \to \mathbb{R}_+$ and pressure $p = p(t,\placeholder) \colon \overline{\fluidD(t)} \to \mathbb{R}$ are functionally related according to the power law equation of state
\be
\label{definition of P}
	p = P(\rho) \colonequals  \frac{\alpha^2}{\gamma} \left(\frac{\rho}{\rhos}\right)^\gamma -  \frac{\alpha^2}{\gamma},
\ee
where $\gamma \geq 1$ is the \emph{adiabatic index}, a physical parameter representing the ratio of the specific heats, $\rhos > 0$ is the (constant) density at the surface, and $\alpha > 0$ is a proportionality constant. The speed of sound $c$ in the water is given by 
\begin{equation}
\label{definition sound speed}
	c^2 \colonequals P'(\rho) =  \alpha^2 \frac{\rho^{\gamma-1}}{\rhos^\gamma}.
\end{equation}
The \emph{isothermal} case corresponds to $\gamma = 1$ so that the equation of state is given by Boyle's law. By \eqref{definition sound speed}, it then follows that $c^2$ takes on the constant value $\alpha^2$ throughout the bulk of the fluid; this is the setting considered by Longuet-Higgins~\cite{longuet1950theory} and Omira et al.~\cite{omira2022global}. Our methodology also extends easily to the \emph{polytropic} case $\gamma > 1$, so we do not make this restriction.

Because the velocity field is curl-free, it can be represented as the gradient of a potential $\Phi = \Phi(t,\placeholder) \colon \overline{\fluidD(t)} \to \mathbb{R}$.  The isentropic Euler equations then take the form 
\begin{equation}
\label{isentropic Euler equations}
\left\{
\begin{aligned}
	\partial_t (\rho \nabla \Phi) + \nabla \cdot \left( \rho \nabla \Phi \otimes \nabla \Phi \right) + \nabla P(\rho) & = -g\rho \mathbf{e}_2 \\
	\partial_t \rho + \nabla \cdot (\rho \nabla \Phi) & = 0
\end{aligned}
\right.
\qquad \textrm{in } \fluidD(t),
\end{equation}
where $g > 0$ is the constant of gravitational acceleration. The first equation represents the conservation of momentum and the second is the conservation of mass. Substituting the mass equation into the momentum allows us to integrate, leading to the 
unsteady Bernoulli principle
\begin{equation}
\label{intro unsteady Bernoulli equation}
		\partial_t \Phi + \frac{1}{2} |\nabla \Phi|^2 + gY + H(\rho)  = B(t) \qquad  \textrm{in } \fluidD(t).
\end{equation}
Here, $B = B(t)$ is in general a time-dependent but spatially constant quantity and $H$ is the \emph{enthalpy} defined by
\be
\label{definition enthalpy}
	H(r) \colonequals  
	\left\{
	\begin{aligned}
		   \frac{\alpha^2}{\gamma-1} \left(\frac{r}{\rhos} \right)^{\gamma-1} & \qquad \textrm{if } \gamma > 1 \\
		\alpha^2  \log{\frac{r}{\rhos}} & \qquad \textrm{if } \gamma = 1.
	\end{aligned}
	\right.
\ee

Following Blokhintsev~\cite{blokhintsev1956acoustics}, the density and gravity can then be eliminated from~\eqref{intro unsteady Bernoulli equation} by taking the material derivative. For $\gamma > 1$, this calculation yields
\begin{equation}
\label{dimensional BLL equation}
\begin{aligned}
	0 & = \left(\partial_t  + \nabla \Phi \cdot \nabla \right) \left( \partial_t \Phi + \frac{1}{2} |\nabla \Phi|^2 +gY \right) + \alpha^2 \frac{\rho^{\gamma-2}}{\rhos^{\gamma-1}} (\partial_t + \nabla \Phi \cdot \nabla ) \rho \\
		& = \left(\partial_t  + \nabla \Phi \cdot \nabla \right) \left( \partial_t \Phi + \frac{1}{2} |\nabla \Phi|^2 +gY \right) + (\gamma-1) \left( \partial_t \Phi + \frac{1}{2} |\nabla \Phi|^2 + gY -B \right) \Delta \Phi.
\end{aligned}
\end{equation}
To obtain the second line from the first, we used the conservation of mass in \eqref{isentropic Euler equations}, then reexpressed $\rho$ in terms of $\Phi$ and $B$ via the Bernoulli equation \eqref{intro unsteady Bernoulli equation}. 

When $\gamma = 1$, the same idea gives us instead that
\begin{align*}
	0 
	& = \left(\partial_t  + \nabla \Phi \cdot \nabla \right) \left( \partial_t \Phi + \frac{1}{2} |\nabla \Phi|^2 +gY \right)  -\alpha^2 \Delta \Phi,
\end{align*}
which was first derived by Rayleigh~\cite{rayleigh1916flow}.

On the boundary $\partial \fluidD(t)$ of the fluid domain, we impose two conditions.  The \emph{kinematic condition} is the requirement that the velocity there be purely tangential, while the  \emph{dynamic condition} asks that the pressure be continuous across the free boundary $\fluidS(t)$. Because we are assuming that the region above $\fluidS(t)$ is at constant pressure (normalized to $0$), the dynamic condition simply asserts that $P$ must vanish identically along the free boundary. Note that this is consistent with our ansatz for $P$ in~\eqref{definition of P}, and it justifies the earlier statement that $\rho$ is constant on $\fluidS(t)$.  From \eqref{intro unsteady Bernoulli equation}, we therefore have
\begin{equation}
\label{unsteady dynamic condition}
	\partial_t \Phi + \frac{1}{2} |\nabla \Phi|^2 +g Y = Q(t) \qquad \textrm{on } \fluidS(t),
\end{equation}
for the \emph{Bernoulli constant} 
\[
	Q(t) \colonequals B(t) - H(\rhos). 
\]

\label{steady problem section}
Assume now that the flow is \emph{steady} in that it is independent of time when viewed in a reference frame that is translating at some fixed velocity $\cwave$ along the positive $X$-axis.  We write $(x,y)$ for the coordinates in the corresponding moving frame and, abusing notation slightly, denote by $\fluidD$ the (now steady) fluid domain, and likewise for $\fluidB$ and $\fluidS$. Let $\phi = \phi(x,y) \colon \overline{\fluidD} \to \mathbb{R}$ denote the potential for the relative velocity field, so that
\[
	\Phi(t,X,Y) =  \phi(x,y) + \cwave x.
\]
We will consider two classes of solutions to this system: \emph{periodic waves}, for which the unknowns are each periodic in the $x$-variable, and \emph{solitary waves}, for which the free boundary is asymptotically flat upstream ($x \to -\infty$) and downstream ($x \to +\infty$) with the same limiting depth in both directions. Note that in the latter case, the limiting velocity field is necessarily horizontal and of constant magnitude. Later, in Section~\ref{front section} we study a related two-phase problem, and construct traveling \emph{fronts} where the interface between the two phases is asymptotically flat but with distinct heights upstream and downstream. For now, though, we focus on just the one-phase model. These are illustrated in Figure~\ref{types of waves figure}(a)(b).

\begin{figure}
\includegraphics[width=0.8\linewidth]{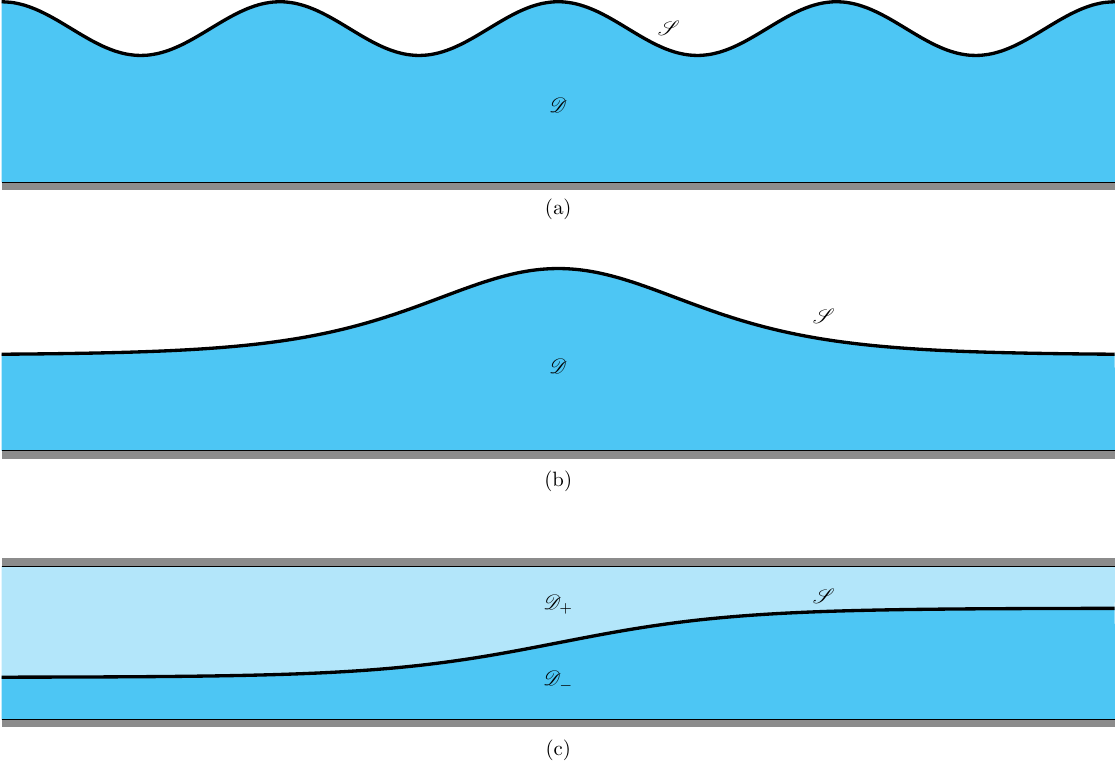}
\caption{The three species of traveling wave considered. The unshaded regions are vacuum at constant pressure $0$, while the light and dark blue shaded regions are fluid with positive density. The gray horizontal bars represent the rigid boundaries. The thick shaded line is the free boundary $\fluidS$.  (a) A \emph{periodic} or \emph{Stokes wave} in finite depth water. (b) A \emph{solitary wave} for which $\fluidS$ limits to the same horizontal line in the upstream and downstream limits. (c) A \emph{front} in a two-phase system, where the light $\fluidD_-$ and dark blue $\fluidD_+$ regions correspond to immiscible fluids separated by the free boundary $\fluidS$. }
\label{types of waves figure}
\end{figure}

 The next step is to nondimensionalize \eqref{dimensional BLL equation}. For periodic waves, we take the characteristic length scale $L$ to be the undisturbed mean depth $d$ of the water and the speed of the wave $\cwave$ as the characteristic velocity $U$. For solitary waves, we instead define $L$ to be the limiting depth and set $U$ to be the limiting (horizontal) speed. Then
 \begin{equation}
 \label{nondimensionalization space and velocity}
 \begin{aligned} 
 	 (\tilde{x},\tilde{y}) &\colonequals \left( \frac{x}{L},\frac{y}{L}\right),&\quad\tilde{\phi} &\colonequals \frac{\phi}{LU}, &\quad \tilde \rho &\colonequals \frac{\rho}{\rhos}, & \quad \tilde B & \colonequals \frac{B}{U^2}, \\
	 \tilde Q & \colonequals \frac{Q}{U^2}, &\quad \tilde c^2 &\colonequals \frac{c^2}{U^2}, &\quad \tilde\alpha^2 &\colonequals \frac{\alpha^2}{U^2}
\end{aligned}
\end{equation}
are the corresponding nondimensionalized physical variables, unknowns, and parameters. 

Rewriting \eqref{dimensional BLL equation} in terms of $\tilde{\phi}$ and dropping the tildes leads us to the governing equation
\bse
\label{steady Euler system}
\begin{equation}
\label{steady Euler}
	\Delta \phi = \mu  \left( \nabla \phi \cdot \nabla + (\gamma-1) \Delta \phi \right) \left( \frac{1}{2}  |\nabla \phi|^2 +  \frac{1}{F^2} y \right) \qquad \textrm{in } \fluidD
\end{equation}
where 
\be
\label{definition Mach number}
	\mu \colonequals \left\{
		\begin{aligned}
			\frac{1}{ (\gamma-1) B} & \qquad \textrm{if } \gamma \neq 1\\
			\frac{1}{\alpha^2} & \qquad \textrm{if } \gamma = 1
		\end{aligned}
	\right.
\ee
 is the (square) \emph{Mach number}, we are again abusing notation by writing $\fluidD$ for the nondimensionalized (steady) fluid domain, and 
\[
 	F^2	\colonequals \frac{U^2}{gL}
\]
is the (square) Froude number. 

Thus, formally $\mu = 0$ corresponds to the incompressible case where the relative potential is harmonic. The Froude number measures the relative influence of gravitational and inertial effects.  

Applying the same nondimensionalization to the dynamic condition \eqref{unsteady dynamic condition} yields
\begin{equation}
\label{steady dynamic condition}
	\frac{1}{2} |\nabla \phi|^2 + \frac{1}{F^2}y = Q \qquad \text{on } \fluidS.
\end{equation}

Finally, for the kinematic condition we have that
\begin{equation}
\label{steady kinematic condition}
	\nu \cdot\nabla\phi=0 \qquad \textrm{on } \fluidS,
\end{equation}
where $\nu$ is the outward unit normal to $\fluidD$ along $\fluidS$, and 
\begin{equation}
\label{steady kinematic condition-bed}
	\partial_y \phi = 0 \qquad \textrm{on } \fluidB.
\end{equation}
\ese
Note that the nondimensionalized form of Bernoulli's law is 
\be
\label{nondimensional Bernoulli law}
	\frac{1}{2} |\nabla \phi|^2 + \frac{1}{F^2}y + H(\rho) = B \qquad \textrm{in } \fluidD,
\ee
where, after nondimensionalization, the enthalpy now has the simpler form
\[
	H(r) = 
	\left\{
	\begin{aligned}
		   \frac{\alpha^2}{\gamma-1} r^{\gamma-1} & \qquad \textrm{if } \gamma > 1 \\
		\alpha^2  \log{r} & \qquad \textrm{if } \gamma = 1.
	\end{aligned}
	\right.
\]
For there to be a physical solution, the density $\rho$ needs to be strictly positive on $\overline{\fluidD}$, and thus in the polytropic case it must be that
\be
\label{intro subsonic flow}
	\inf_{\fluidD} \left( B - \frac{1}{2} |\nabla \phi|^2 - \frac{1}{F^2} y \right) > 0.
\ee
In particular, $B$ is strictly positive. We will see that this requirement is enforced by the assumption that the flow is subsonic, and will therefore be intimately related to the ellipticity of the system.

\subsection{Statement of results}

We now describe the main contributions of the paper. This will be done somewhat informally for the time being, because the precise statement requires making a change of variables.

Our first set of results establishes the existence of large families of compressible periodic and solitary surface water waves. These are \emph{lightly compressible} in the sense that the corresponding square Mach number $\mu$ is in a neighborhood of $0$; they are not necessarily small amplitude in that $\fluidS$ may be far from flat.

\begin{theorem}[Lightly compressible surface water waves] 
Let $(\phi_0, \fluidS_0, Q_0, F_0)$ be a solution to the water wave problem~\eqref{steady Euler system} that is incompressible in that $\mu = 0$.
\begin{enumerate}[label=\rm(\alph*)]
	\item \textup{(Periodic waves)} Suppose that for some $k > 0$,  $\phi_0$ and $\fluidD_0$ are $2\pi/k$-periodic in the $x$-direction. If the wave is nondegenerate as a solution of the incompressible problem, then there exists a real-analytic curve 
	\[
		\cm_\loc^\per = \left\{ (\phi^\mu,\fluidS^\mu, Q^\mu, \mu) : |\mu| \ll 1  \right \}
	\]
	of solutions to the compressible problem~\eqref{steady Euler system} that are also $2\pi/k$-periodic in $x$ with fixed Froude number $F_0$ and such that $(\phi^0,\fluidS^0,Q^0) = (\phi_0, \fluidS_0, Q_0)$.
	\item \textup{(Solitary waves)}
	If instead $(\phi_0, \fluidS_0,Q_0, F_0)$ is an incompressible solitary wave that is nondegenerate, then there exists a corresponding real-analytic curve
	\[
		\cm_\loc^\sol = \left\{ (\phi^\mu, \fluidS^\mu, \mu) : |\mu| \ll 1 \right\}
	\]
	of solutions to the compressible problem~\eqref{steady Euler system} with fixed Froude number $F_0$ and Bernoulli constant $Q_0 = 1/2$ that are also solitary waves and such that $(\phi^0,\fluidS^0) = (\phi_0, \fluidS_0)$.

\end{enumerate}

\label{intro JRE theorem}
\end{theorem}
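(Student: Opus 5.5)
The plan is to recast the free boundary problem \eqref{steady Euler system} as a fixed-domain operator equation $\mathcal{F}(w,\mu)=0$ that is real-analytic in $(w,\mu)$. We then apply the analytic implicit function theorem at $(w_0,0)$. The hypothesis that the incompressible wave is nondegenerate will be taken to mean exactly that the linearization $D_w\mathcal{F}(w_0,0)$ is an isomorphism between suitable Banach spaces.

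\textbf{Flattening and choice of spaces.} Because $\fluidS$ need not be a graph, we flatten through a conformal parametrization of the incompressible domain $\fluidD_0$. Let $\mathcal{Z}\colon \{-h<\imagpart\zeta<0\}\to\fluidD_0$ be a conformal map, and write the unknown domain as a perturbation of it. Equivalently, we use hodograph-type coordinates $(x,y)\mapsto(\phi,\psi)$ adapted to $\fluidD_0$, where $\psi$ is a pseudo-stream function solving $\nabla^\perp\psi=\rho\nabla\phi$; this is legitimate by the mass equation. In the resulting fixed strip:
\begin{itemize}
\item the kinematic conditions \eqref{steady kinematic condition}, \eqref{steady kinematic condition-bed} become Dirichlet conditions on $\psi$;
\item the interior equation \eqref{steady Euler} becomes a quasilinear equation whose principal part is the flat Laplacian plus $\mu$ times a second-order operator with analytic coefficients;
\item the dynamic condition \eqref{steady dynamic condition} becomes a nonlinear boundary condition of oblique type on the top boundary.
\end{itemize}

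For periodic waves we work in $2\pi/k$-periodic Hölder spaces $C^{2+\beta}_{\mathrm{per}}$, keeping $Q$ as an unknown and $F=F_0$ fixed. For solitary waves we use spaces of functions that are $C^{2+\beta}$ and decay (or are uniformly bounded, even in $x$), with $Q=1/2$ and $F=F_0$ fixed. Positivity \eqref{intro subsonic flow} holds near $\mu=0$: the physical density ratio is $B\sim 1/\mu$ to leading order, so $\rho$ is close to $1$. Hence the map is well defined and analytic on a neighborhood of $(w_0,0)$, since the coefficients are polynomial or analytic in $\nabla\phi$, $y$ and $\mu$.

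\textbf{Linearization and conclusion.} At $\mu=0$ the operator $\mathcal{F}(\cdot,0)$ is precisely the incompressible irrotational water wave operator in the same coordinates. The $\mu$-dependent terms enter only as $\mu\times(\text{analytic})$, so $D_w\mathcal{F}(w_0,0)$ coincides with the incompressible linearized operator. By nondegeneracy this operator is invertible from $\Xspace$ to $\Yspace$; in the periodic case this is after including $Q$ as an unknown, or modding out by the translation symmetry via evenness, as in the definition. The analytic implicit function theorem then gives a real-analytic curve $\mu\mapsto w^\mu$. Undoing the change of variables recovers $(\phi^\mu,\fluidS^\mu,Q^\mu)$, with $\fluidS^\mu$ the image of the top boundary, which need not be a graph.

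\textbf{Main obstacle.} The delicate step is setting up the reformulation so that all of the following hold at once:
\begin{itemize}
\item the map is genuinely analytic and Fredholm-compatible between the same spaces used to define nondegeneracy;
\item the perturbation is of order $\mu$ in the operator norm $\Xspace\to\Yspace$, including at top order, which holds since the principal symbol is $\Delta+O(\mu)$ and remains elliptic for small $\mu$;
\item in the solitary case, decaying perturbations preserve the asymptotic state. This means the upstream/downstream limits must still have unit speed and depth with $Q=1/2$, which requires checking that constant horizontal flow solves the compressible problem with these normalizations for every $\mu$.
\end{itemize}
I would first verify this last point for the solitary case, and check that the weighted/decaying spaces are mapped to themselves. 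After that, the remaining argument is routine.
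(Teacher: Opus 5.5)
Your outline (fix the domain, obtain a real-analytic map, apply the analytic implicit function theorem at $\mu=0$) matches the paper's, but the one step with real content is assumed rather than proved.

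In the statement, nondegeneracy means \eqref{definition non-degenerate periodic} (resp.\ \eqref{definition non-degenerate solitary}): invertibility of $D_{(w,Q)}(\F_2,\G)$ alone. This is a property of the incompressible problem, in which the potential is not an unknown at all ($\phi_0\circ f_0=\lambda_0\xi$, i.e.\ $u\equiv 0$). You instead take nondegeneracy to mean invertibility of the full linearization of your compressible map at $\mu=0$, which proves the theorem by fiat. You justify identifying the two operators by noting that the $\mu$-terms drop out at $\mu=0$, but that does not suffice. Once the potential perturbation is an unknown, as in the paper ($\phi\circ f=u+\lambda_0\xi$), the $\mu=0$ linearization still acts on $\dot u$. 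It does so through $\Delta\dot u$ in $\confD$ and through the term $\lambda_0\partial_\xi\dot u/|\nabla w_0+\mathbf{e}_2|^2$ in the linearized Bernoulli condition. So it is strictly larger than the operator the hypothesis controls.

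Your reformulation is also not pinned down, and the two versions you offer are not equivalent. Pulling back by a fixed conformal map of $\fluidD_0$, or using any flattened stream-function formulation, requires a lemma transferring nondegeneracy between formulations; Section~\ref{front section} shows this is real work. The pseudo-hodograph coordinates $(\phi,\psi)$ with $\nabla^\perp\psi=\rho\nabla\phi$ do reduce at $\mu=0$ to the paper's $(w,Q)$-problem after the rescaling $\phi+i\psi=\lambda_0\zeta$. However, they are not conformal for $\mu\neq 0$, and they put the periodicity in $\phi$ rather than $x$. They also turn the fixed period into the nonlinear constraint $\int_0^P\rho\,\partial_\psi y\,d\phi=2\pi/k$, with $P$ the period in $\phi$. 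That constraint needs an extra free parameter and a check that the bordered linearization stays invertible.

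The paper closes this gap with a triangular structure that comes from its choice of unknowns. It uses a conformal map onto the \emph{unknown} domain, with $w=\imagpart(f-\id)$, $\phi\circ f=u+\lambda_0\xi$, $\lambda_0$ fixed, and $Q$ free. At $\mu=0$ the interior equation for $u$ is just $\Delta u=0$, which does not involve $w$. Hence $\mathscr{L}_0=D_{(u,w,Q)}(\F,\G)(0,w_0,Q_0,\lambda_0,0)$ is block lower triangular. Its diagonal blocks are $\Delta_{\odd}$, the Neumann Laplacian on odd (hence mean-zero) periodic functions, which is invertible, and $D_{(w,Q)}(\F_2,\G)$, which is invertible by hypothesis. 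The off-diagonal coupling $\dot u\mapsto\lambda_0\partial_\xi\dot u/|\nabla w_0+\mathbf{e}_2|^2$, restricted to $\Gamma_1$, is bounded. One solves for $\dot u$ first and then for $(\dot w,\dot Q)$ with shifted Bernoulli data, so $\mathscr{L}_0$ is an isomorphism. The analytic implicit function theorem together with Lemma~\ref{periodic equivalence lemma} then finishes.

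One caution about your claim that the solitary case is routine once the decaying spaces are preserved. In this formulation the corresponding block is the Neumann Laplacian on the strip, whose $\eta$-average is $\partial_\xi^2$ on $\mathbb{R}$, and it is not onto functions vanishing at infinity. Indeed, averaging $\Delta u_1=\lambda_0\partial_\xi\mathfrak{B}(0,w_0)$ over $\eta$ shows that $u_1$ tends to opposite nonzero constants as $\xi\to\pm\infty$, already for small-amplitude waves. So in that case you must either measure only $\nabla u$, with a decay rate, or use a formulation in which the potential is not an unknown, such as the pseudo-hodograph one.
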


\begin{remark}
See Theorems~\ref{periodic JRE theorem} and \ref{solitary JRE theorem} for rigorous versions of this result. The precise definition of nondegenerate is given in~\eqref{definition non-degenerate periodic} for periodic waves and~\eqref{definition non-degenerate solitary} for solitary waves. It holds generically in the sense that for the many families of incompressible waves constructed via analytic global bifurcation theory, all but a discrete set of solutions are nondegenerate. For example, the famous curve of Stokes waves begins at a laminar flow and terminates at an ``extreme wave'' with a $120^\circ$ corner at its crest. Applying a corollary of Theorem~\ref{intro JRE theorem} along this family furnishes lightly compressible but large-amplitude periodic waves that are likewise arbitrarily close to having the same corner singularity.
\end{remark}

\begin{remark}
\label{intro JRE remark}
Formally, one expects to be able to write $\phi^\mu$ as the incompressible relative potential $\phi_0$ with an $O(\mu)$ correction, in an appropriate norm, to account for the compressible effects. More generally, this suggests a power series expansion
\[
	\phi^\mu = \phi_0 + \sum_{m \geq 1} \mu^m \phi_m ,
\]
where each term $\phi_m$ is found by solving the hierarchy of equations that results from inserting this ansatz into the system~\eqref{steady Euler system} and doing matched asymptotics. This very natural approach was introduced by Janzen~\cite{janzen1913beitrag} and Rayleigh~\cite{rayleigh1916flow}, and is used extensively in the applied literature. It is somewhat more complicated for free boundary problems as one must also make corrections to the domain. Nonetheless, our construction gives a \emph{rigorous} justification for the Janzen--Rayleigh expansion: the validity of the power series representation is an immediate consequence of the analytic implicit function theorem. The equations defining the first-order corrections to the potential and domain are given in~\eqref{periodic JRE u} and \eqref{periodic JRE w}, respectively, but this can in principle be carried out to arbitrary order in $\mu$.
\end{remark}
\begin{remark}
While we only consider gravity waves, a straightforward adaptation of the argument applies to capillary-gravity waves. Similarly, one can construct lightly compressible three-dimensional waves of various types. The conformal mapping reformulation adopted in Section~\ref{local bifurcation section} will not work for this, but it is possible to instead use a naïve flattening transform as we do in Section~\ref{front section}. It would also be interesting to consider the case of compressible water waves with vorticity. Recently, Xie and Xin~\cite{xie2010global,xie2010existence} were able to construct large families of subsonic rotational flows through a rigid nozzle (absent gravity). We believe a version of their approach could be used in the free boundary setting as well by incorporating our techniques.
\end{remark}

The next theorem takes these lightly compressible families and continues them into the finite Mach number regime. For simplicity, we present only the periodic case, but similar methods can be applied --- with some additional difficulties --- to solitary waves and fronts as well. 

Before stating the result, let us introduce some terminology to help describe the possible limiting behavior as one follows these families to their extreme.  As we detail further in Section~\ref{lightly compressible periodic waves section}, we will formulate the Euler equations~\eqref{steady Euler system} in a fixed domain using a conformal mapping. If the physical domain $\fluidD$ is even with respect to the $y$-axis and $2\pi/k$-periodic, then there exists a unique $h > 0$ and holomorphic function $f = f(\zeta)$ whose domain is the slab
\[
	\confD \colonequals \{ \zeta \in \mathbb{C} : -kh < \imagpart{\zeta} < 0 \},
\] 
with upper and lower boundary components denoted by $\Gamma_1$ and $\Gamma_0$, respectively, and such that $f$ satisfies the following:
\begin{itemize}
	\item $f$ admits a $C^{2+\beta}(\overline{\confD})$ extension as a homeomorphism between $\overline{\confD}$ and $\overline{\fluidD}$ with
\[
	 \fluidD = f(\confD), \qquad f(\Gamma_0) = \fluidB, \qquad f(\Gamma_1) = \fluidS, \qquad f(\placeholder + 2\pi) = f + \frac{2\pi}{k};
\]
	\item $\partial_\zeta f$ is non-vanishing on $\overline{\confD}$; and 
	\item  $f$ is injective on $\overline{\confD}$. 
\end{itemize}	
See~\cite[Appendix A]{constantin2011constant}. 
One possibility is that, as one continues the solution curve, this conformal description degenerates. Specifically, this means that along the global curve, the quantity  
\be
\label{intro conformal degeneracy}
	\Nconf(\fluidD) \colonequals \sup_{\Gamma_1} { {|\partial_\zeta f|}} +  \sup_{\substack{\zeta, \zeta^\prime \in \partial\confD \\ \zeta\neq\zeta^\prime}}{\frac{|\zeta-\zeta^\prime|}{|f(\zeta)-f(\zeta^\prime)|} }  \qquad \textrm{is unbounded}.
\ee
Blowup of the first term is suggestive of solutions arbitrarily close to manifesting a surface singularity. Unboundedness of the second term, on the other hand, would occur if conformality fails on the boundary of $\confD$, or if $f$ fails to be injective on $\partial\confD$. The latter case would arise if $\fluidS$ self-intersects or comes into contact with $\fluidB$, at which point the physicality of the model breaks.

Another alternative is that the velocity field degenerates, by which we mean that along the global curve the quantity 
\be
\label{intro velocity degeneracy}
	\Nvel(\phi, \mu) \colonequals \| \phi \|_{C_\per^{2+\beta}(\fluidD)} + |\mu| \qquad \textrm{is unbounded}.
\ee
The blowup of the first term corresponds to blowup of the velocity field in $C^{1+\beta}$, which is a higher-regularity norm than one often sees in global bifurcation theoretic arguments of this type for incompressible waves. Typically, one hopes to be able to do a bootstrap argument to show that boundedness of the velocity field in $C^1$ implies boundedness of the higher-regularity norm as well. As we will see in Section~\ref{lightly compressible periodic waves section}, however, in the compressible setting the system is quasilinear with a complete coupling between the unknowns at the leading order. This additional structural complexity makes it difficult to apply a similar argument. We note that Xie and Xin~\cite{xie2010existence} likewise have $C^{1+\beta}$ blowup of the velocity as one of their unboundedness alternatives for the nozzle problem.

\begin{theorem}[Finite Mach number periodic waves]
\label{intro global periodic theorem}
Let $\cm_\loc^\per$ be the real-analytic curve of lightly compressible periodic waves furnished by Theorem~\ref{intro JRE theorem}. There exists a curve $\cm_\per$ of compressible periodic water waves admitting the $C^0$ parameterization
\[
	\cm_\per = \left\{ (\phi(s),\fluidS(s),Q(s),\mu(s) ) : s \in [0,\infty) \right\} 
\]
with $\cm_\loc^\per \subset \cm_\per$. In the limit $s \to \infty$, one of the following alternatives must occur:
		\begin{enumerate}[label=\rm(A\arabic*)]
			\item \label{intro periodic unboundedness alternative} \textup{(Unboundedness)} Conformal degeneracy~\eqref{intro conformal degeneracy} or velocity degeneracy occurs~\eqref{intro velocity degeneracy}: 
			\[
				 \Nconf(\fluidD(s)) + \Nvel(\phi(s), \mu(s)) \to \infty,
			\]
			where $\fluidD(s)$ is the fluid domain determined by $\fluidS(s)$.
			\item \label{intro transsonic limit alternative} \textup{(Sonic point)} At a point in $\overline{\fluidD(s)}$, the flow approaches transsonic:
			\be
			\label{intro transsonic limit}
		\lim_{s \to \infty} \inf_{\fluidD(s)} \left(c(s)^2 - |\nabla \phi(s)|^2 \right) = 0,
			\ee
			where $c(s)$ is the speed of sound at the parameter value $s$.
			\item \label{intro stagnation alternative} \textup{(Surface stagnation)} At a point on the free boundary $\fluidS(s)$, the flow approaches stagnation:
			\[
				\lim_{s \to \infty} \inf_{\fluidS(s)} |\nabla \phi(s)| = 0.
			\]
			\item \label{intro periodic loop alternative} \textup{(Closed loop)} There exists $T > 0$ such that 
			\[
				\left(\phi(s+T),\, \fluidS(s+T),\, Q(s+T),\, \mu(s+T) \right)= \left(\phi(s),\, \fluidS(s), \, Q(s),\, \mu(s) \right)
			\]
			for all $ s \in [0,\infty)$. If this occurs, then necessarily $\cm_\per$ contains two distinct incompressible solutions.
		\end{enumerate}
\end{theorem}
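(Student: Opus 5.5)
We will prove Theorem~\ref{intro global periodic theorem} with the analytic global bifurcation theorem of Buffoni--Toland in the form of Chen--Walsh--Wheeler. It applies to a real-analytic operator equation $\F(w,\mu)=0$ between Banach spaces, posed on an open set $\mathcal{O}$. The conclusion is a global continuation of a local analytic curve along which either the solutions leave every set of the form $\{\Nconf+\Nvel \le K\}$ intersected with $\{\dist(\cdot,\partial\mathcal{O})\ge 1/K\}$, or the curve is a closed loop.

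\textbf{Formulation.} We use the conformal reformulation of Section~\ref{lightly compressible periodic waves section}. The unknowns are the boundary trace $w$ that determines $f$ (via the periodic Hilbert transform on the strip $\confD$), the potential pulled back to $\overline{\confD}$, and $Q$, with $F=F_0$ and $2\pi/k$-periodicity and evenness fixed. We pose everything in $C^{2+\beta}_\per$-type spaces. The open set $\mathcal{O}$ is cut out by three requirements:
\begin{itemize}
\item $f$ is an injective conformal homeomorphism with $\partial_\zeta f\ne0$ on $\overline{\confD}$;
\item the flow is strictly subsonic, $\inf(c^2-|\nabla\phi|^2)>0$, which by \eqref{intro subsonic flow} forces $\rho>0$;
\item there is no surface stagnation, $\inf_{\fluidS}|\nabla\phi|>0$.
\end{itemize}
On $\mathcal{O}$ the interior operator from \eqref{steady Euler} is a uniformly elliptic quasilinear operator in $\phi$. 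The boundary conditions \eqref{steady dynamic condition}--\eqref{steady kinematic condition-bed} become an oblique/nonlocal condition on $\Gamma_1$. Nonvanishing of $\nabla\phi$ there keeps the Bernoulli condition nondegenerate, as in the incompressible Stokes wave theory. $\F$ is real analytic on $\mathcal{O}$ because all nonlinearities are polynomial or analytic in the arguments there.

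\textbf{Key steps.} First, the curve $\cm_\loc^\per$ from Theorem~\ref{intro JRE theorem} (Theorem~\ref{periodic JRE theorem}) gives the required local analytic curve, with $\F_w$ invertible at $\mu=0$ by nondegeneracy. Second, we prove that $\F_w(w,\mu)$ is Fredholm of index $0$ at every point of $\mathcal{O}$. We will do this by freezing coefficients: we show the linearized elliptic boundary value problem satisfies the Lopatinskii--Shapiro condition, obtain Schauder estimates, and then deform homotopically to $\mu=0$, where invertibility is known. Third, we establish compactness: closed bounded subsets of $\{\F=0\}$ that lie uniformly inside $\mathcal{O}$ are compact. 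This follows from the Schauder estimates together with a slight gain in regularity, namely that $C^{2+\beta}$ solutions are in fact $C^{2+\beta'}$ for some $\beta'>\beta$ after a bootstrap, plus Arzel\`a--Ascoli and periodicity. Applying the abstract theorem then yields $\cm_\per$ together with the alternatives: blowup of $\Nconf+\Nvel$ gives \ref{intro periodic unboundedness alternative}; approach to $\partial\mathcal{O}$ gives \ref{intro transsonic limit alternative} or \ref{intro stagnation alternative}, since conformal failure is already absorbed into $\Nconf$; the loop case gives \ref{intro periodic loop alternative}. For the final claim in \ref{intro periodic loop alternative}, note that the curve is parametrized to pass through $\mu=0$. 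By the general theory, a loop must revisit the point $\mu=0$ on a different branch, and the standard argument with analytic reparametrization yields two distinct incompressible solutions.

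\textbf{Main obstacle.} The hardest step is the Fredholm/compactness step. The compressible system is quasilinear with full leading-order coupling between the domain and the potential. After conformal flattening, the principal part involves $\nabla\phi\otimes\nabla\phi$, whereas the incompressible case has a pure Laplacian. We therefore have to verify uniform ellipticity from the subsonic condition, together with the complementing condition for the free-boundary trace. To handle the coupling, we would first try to eliminate the domain unknown using the incompressible-style Babenko-type nonlocal reformulation. This leaves the Bernoulli condition as a first-order pseudodifferential condition whose symbol is controlled by $\inf_{\fluidS}|\nabla\phi|>0$. Compactness requires care because $C^{1+\beta}$ bounds may not bootstrap; this is why $\Nvel$ is stated in $C^{2+\beta}$ and why we prove compactness only on sets where that norm is bounded.
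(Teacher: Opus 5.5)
Your architecture matches the paper's. You use conformal unknowns $(u,w,Q)$ at fixed $\lambda_0$, an open set encoding ellipticity (subsonic flow on the zero set), absence of surface stagnation and injectivity of $f$, and the analytic global implicit function theorem started at the nondegenerate incompressible wave. You also have ADN ellipticity, a Lopatinskii condition governed by $\sigma=\partial_\xi u+\lambda$, and exclusion of the compactness and Fredholmness alternatives.

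\textbf{The gap: local properness.} The gap is in the step you yourself flag as the crux. A H\"older-exponent bootstrap from $C^{2+\beta}$ to $C^{2+\beta'}$ with $\beta'>\beta$ does not close here.
\begin{itemize}
\item The linearization at a $C^{2+\beta}$ solution has interior lower-order coefficients such as $\Delta u$ and $\nabla\mathfrak{B}$ that lie only in $C^\beta$.
\item Suppose you instead freeze the quasilinear structure and write the Bernoulli condition linearly in $(\partial_\xi u,\nabla w)$. Its coefficients are then built from $\vfield$, $\qfield$ and $w$, which lie only in $C^{1+\beta}(\Gamma_1)$.
\item A Schauder estimate at exponent $\beta'$ needs $C^{1+\beta'}$ boundary coefficients, so nothing is gained at the free boundary.
\item Compactness also requires a \emph{uniform} bound in a stronger norm on bounded subsets of the zero set in $\mathcal{O}_\delta$, not merely qualitative regularity of each solution.
\end{itemize}

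\textbf{The missing idea: translation invariance in $\xi$.} Because $(\F,\G)$ commutes with $\xi$-translations, $(\partial_\xi u_j,\partial_\xi w_j)$ is annihilated by $D_{(u,w)}(\F,\G)(u_j,w_j,Q_j,\lambda_j,\mu_j)$.
\begin{itemize}
\item Lemma~\ref{schauder lemma} applies at the \emph{same} exponent $\beta$, and its proof ignores the parity and mean-zero constraints. It therefore bounds $(\partial_\xi u_j,\partial_\xi w_j)$ uniformly in $C^{2+\beta}_\per$.
\item Combined with $C^2$ convergence from the compact embedding, this gives $C^\beta$ convergence of every second derivative except $\partial_\eta^2$.
\item For $w_j$, harmonicity gives $\partial_\eta^2 w_j=-\partial_\xi^2 w_j$.
\item For $u_j$, differentiating $\F_1=0$ in $\eta$ shows that $\partial_\eta u_j$ solves a uniformly elliptic scalar equation. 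Its right-hand side is bounded in $C^\beta$ and it has homogeneous Dirichlet data on $\partial\confD$, so scalar Schauder estimates finish the argument. This is Lemma~\ref{local properness lemma}.
\end{itemize}

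\textbf{Smaller points.}
\begin{itemize}
\item Your Fredholm step claims invertibility at $\mu=0$. That is known only at the nondegenerate base point, not at arbitrary $\mu=0$ states. You do not need it: Theorem~\ref{homoclinic global ift} already supplies index $0$ along the curve. Excluding loss of Fredholmness only requires semi-Fredholmness at limit points, which follows from the Schauder estimate and the compact embedding $C^{2+\beta}_\per\subset C^\beta_\per$ (Lemma~\ref{periodic Fredholmness lemma}).
\item A Babenko-type elimination of $w$ would hurt, since the $u$-equation contains $D^2 w$ at principal order ($A_{12}^0\neq 0$). Keeping $w$ as a bulk unknown gives a local upper-triangular ADN system, in which the key Lopatinskii minor is $|M_{14}|=|\tau|\sigma^2/(4\kappa\ellfunc|\qfield|^3)$.
\item You must also show $Q$ stays bounded. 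Average the Bernoulli condition over a period, using that $w$ has mean zero on $\Gamma_1$.
\item For the loop alternative, the concrete argument is as follows. Local uniqueness at the base point forces $\mu<0$ just before $s=T$. Since $\mu>0$ just after $s=0$, $\mu$ must vanish at some $s_*\in(0,T)$, giving the second incompressible solution.
\end{itemize}
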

\begin{remark}
Thanks to a classical result of Spielvogel~\cite[Theorem 3b]{spielvogel1970variational}, it is known that incompressible irrotational gravity water waves cannot overhang, meaning here that the interface $\fluidS$ can always be written as the graph of a single-valued function of $x$. That argument, however, does not apply in the compressible setting. Because we are using a conformal mapping to fix the domain, the curve $\cm_\per$ potentially includes overhanging waves. Establishing this analytically, however, is extremely challenging.
\end{remark}

The second part of the paper deals with the related problem of traveling fronts moving along an interface dividing two immiscible fluids; see Figure~\ref{types of waves figure}(c) for an illustration. There are now two fluid domains, call them $\fluidD_+$ and $\fluidD_-$, which are separated by a free boundary $\fluidS$ across which the pressure is continuous but the density can jump. The flow in each fluid is governed by the isentropic irrotational Euler equations; we take the equation of state to be isothermal, but allow the speed of sound to be different in each layer. We suppose that the free boundary is given globally as the graph of an unknown function $h = h(x)$, with impermeable horizontal boundaries at $y = 0$ and $y = 1$ (in the nondimensionalized variables). Using a similar argument to eliminate the pressure and density, the problem can be formulated in terms of the relative potentials $\phi_\pm$ in $\fluidD_\pm$ and $h$. After suitable nondimensionalization, there is a square Mach number $\mu_\pm$ associated to each fluid.

Solutions of this type are physically interesting for a number of reasons. As one example, the meteotsunami model considered by Omira et al.~\cite{omira2022global} concerns a two-fluid system, with an initial pressure forcing in the upper fluid (the air) generating traveling waves in the lower fluid (the ocean). To keep the presentation here concise, we postpone a detailed discussion of the governing equations until Section~\ref{front section}. For variety, and because they are analytically more challenging, we focus on the case of front-type solutions, meaning that the upstream limiting depth and downstream limiting depth differ. These are often called (smooth) hydrodynamic bores. 

Using a thematically similar implicit function theorem argument, we are able to prove that a Janzen--Rayleigh-type expansion can be carried out at a generic incompressible front solution, which gives rise to large families of lightly compressible fronts. Again, these are not small amplitude, and in fact include solutions that are arbitrarily close to having an overturned interface. See Theorem~\ref{front theorem} and Corollary~\ref{front corollary}. A very informal statement of this result is given below.

\begin{theorem}[Lightly compressible fronts]
Suppose that $(\phi_{0+}, \phi_{0-}, h_0)$ is a solution to the internal front problem~\eqref{two-phase isentropic Euler system} that is incompressible in that $\mu = 0$ and nondegenerate. There exists a real-analytic curve 
	\[
		\cm_\loc^\het = \left\{ (\phi_+^\mu,\phi_-^\mu, h^\mu, \mu) : |\mu| \ll 1  \right \}
	\]
	of front-type solutions to the compressible problem~\eqref{two-phase isentropic Euler system} and such that $\phi_\pm^0 = \phi_{0\pm}$ and $h^0 = h_0$.
\end{theorem}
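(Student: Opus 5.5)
The plan is to flatten the domain and apply the analytic implicit function theorem in $\mu$ at $\mu=0$, with the nondegeneracy hypothesis supplying invertibility of the linearization.

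\textbf{Reformulation on a fixed domain.} Since the interface is a global graph $y=h(x)$ between the rigid walls $y=0$ and $y=1$, I will use a naïve flattening map rather than a conformal one. Write $h = h_0 + \eta$ with $\eta$ in a weighted or exponentially localized H\"older space $C^{2+\beta}_\bdd$ that decays at $\pm\infty$. Then map each layer $\fluidD_\pm$ to a fixed strip, for instance by $(x,y)\mapsto (x,\,(y-h)/(1-h))$ in the upper fluid and $(x,\,y/h)$ in the lower one. I will also subtract the incompressible front, writing $\phi_\pm = \phi_{0\pm}\circ(\text{flattening}) + \varphi_\pm$ with $\varphi_\pm$ decaying.

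\textbf{The nonlinear operator.} In the flattened variables, the compressible equations \eqref{steady Euler}-type in each layer (with $\gamma=1$ and Mach numbers $\mu_\pm=\mu\,\kappa_\pm$ for fixed ratios $\kappa_\pm$), the kinematic conditions on the walls and interface, and the dynamic condition (continuity of pressure, expressed through the two Bernoulli laws) define a map
\[
\F(\varphi_+,\varphi_-,\eta,\mu)\colon \Xspace\times\mathbb{R}\to\Yspace .
\]
This map is real-analytic: it is built from polynomials in derivatives and rational functions of $h$ and $1-h$, which stay bounded away from $0$ near $h_0$. It also satisfies $\F(0,0,0,0)=0$. The upstream and downstream depths and the Bernoulli constants are fixed at the values taken by the incompressible front; for a front these are determined by the conjugate-flow relations. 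I should check that the compressible asymptotic states are consistent. If they are not, I will add the asymptotic depths and Bernoulli constants as unknowns, with corresponding conditions at infinity.

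\textbf{Invertibility of the linearization.} Nondegeneracy will be defined exactly as invertibility (or at least injectivity plus Fredholm index $0$) of $D_{(\varphi,\eta)}\F(0,0,0,0)$, which is the linearized incompressible two-phase problem. This is the main obstacle. The linearized problem is a transmission problem on an unbounded strip coupled to the interface $\eta$ through a Dirichlet--Neumann-type boundary operator. To show it is Fredholm of index $0$ on decaying spaces, I will use the standard approach for fronts:
\begin{enumerate}
\item compute the limiting operators at $x\to\pm\infty$, which have constant coefficients about the two distinct conjugate uniform flows;
\item show these limiting operators are invertible through a Fourier symbol computation, using the fact that the asymptotic states are strictly supercritical or subcritical in the appropriate sense;
\item invoke local Schauder estimates together with the limit-operator (Volpert) theory to conclude Fredholmness;
\item obtain index $0$ by a homotopy argument.
\end{enumerate}
If the symbol analysis at one end degenerates, I would pass to exponentially weighted spaces to shift the essential spectrum off zero.

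\textbf{Conclusion.} Once the linearization is an isomorphism, the analytic implicit function theorem gives a real-analytic curve $\mu\mapsto(\varphi_\pm^\mu,\eta^\mu)$ for $|\mu|\ll1$. Undoing the flattening yields $\phi_\pm^\mu$ and $h^\mu=h_0+\eta^\mu$. I then need to verify that these are genuine fronts: the decay of $\eta^\mu$ and $\varphi_\pm^\mu$ gives the distinct limiting depths inherited from $h_0$. For small $\mu$, the flow stays subsonic and the densities stay positive, so the resulting solutions are physical.
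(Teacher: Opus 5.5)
\textbf{Verdict: there are genuine gaps.} Your top-level strategy is the paper's: flatten both layers, then apply the analytic implicit function theorem in $\mu$ at a nondegenerate incompressible front. But the functional setting you propose cannot work.

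\textbf{The far-field setting.} Taking $\eta$ and $\varphi_\pm$ to decay at both ends freezes the downstream state at its incompressible value. Any compressible front must satisfy the compressible conjugate-flow relations of Corollary~\ref{conjugate flow corollary}. Expand the lower-layer mass-flux identity in \eqref{conservation of mass flux} to first order in $\mu$, holding $d^\dagger=d_0^\dagger$ and $U_-^\dagger=Ud/d_0^\dagger$ fixed. This forces $(d-d_0^\dagger)\bigl(F^2U^2(d+d_0^\dagger)-(d_0^\dagger)^2\bigr)=0$, which is precisely the factor in the $O(\mu)$ coefficient of $U_-^\dagger$ in \eqref{JRE front}. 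Hence, apart from at most one exceptional value of $d$, your class contains no compressible front with $0<|\mu|\ll 1$. So $D_{(\varphi,\eta)}\F(0,0,0,0)$ can never be an isomorphism on your spaces, since otherwise the implicit function theorem would produce such a front.

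Your step (2) also fails independently, even at the upstream end. In potential variables the limiting operators contain the Neumann Laplacian on the strip, whose symbol degenerates at zero frequency (the harmonic modes $a+b\xi$). They are therefore not invertible on bounded H\"older spaces, regardless of criticality, and the potential itself must grow linearly downstream.

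Your fallback of adding asymptotic quantities as unknowns points the right way, but that is where the real work lies. Moreover, the quantities that must move are $d^\dagger$ and $U_\pm^\dagger$, not the Bernoulli constants, which are fixed by the upstream normalization. The paper builds this into the spaces \eqref{definition X space and Y space front}: $w\in C^{2+\beta}_\het$ and $u_\pm\in \dot C^{2+\beta}_\het$, pinned at a point with only $\nabla u$ controlled. Downstream limits are left free there, so $d^\dagger$ and $U_\pm^\dagger$ are determined implicitly along the family.

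\textbf{The nondegeneracy hypothesis.} Declaring nondegeneracy to mean invertibility of your own linearization sidesteps the real issue, and by the above it fails for all but at most one $d$. The nondegeneracy actually available, off a discrete set along the global incompressible families (Theorem~\ref{large incompressible fronts theorem}), is invertibility of the linearized stream-function problem \eqref{definition nondegenerate stream function}. The substance of the paper's proof is transferring it to the potential formulation. The harmonic-conjugate maps $\mathcal{C}^w$, $\tilde{\mathcal{C}}^w$ of Lemma~\ref{correspondence lemma} and Alinhac's good unknowns $\mathcal{T}$, $\tilde{\mathcal{T}}$ of Lemma~\ref{good unknowns lemma} intertwine the two linearizations, which gives injectivity. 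Surjectivity (Lemma~\ref{nondegeneracy front lemma}) is obtained by first solving $\Delta^w \dot v_1 = Y_1$ with an explicit channel Green's function, then using the stream-function isomorphism to absorb the boundary data. No limit-operator or homotopy analysis is needed. Such an analysis would in any case only give Fredholmness, not the injectivity you were assuming.

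\textbf{A minor point.} In the isothermal model $\mu_+/\mu_-$ is the interfacial density ratio, so your fixed $\kappa_+/\kappa_-$ must equal $\rho_0$. The paper lets this ratio vary through $\delta$, and the stated curve is the slice $\delta=0$ of a two-parameter family.
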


\subsection{Plan of the article}

We begin in Section~\ref{preliminaries section} with some preliminary material, including a list of notational conventions used throughout the paper. For the convenience of the reader, we also recall the analytic global implicit function theorem, which will serve as the main tool for constructing finite Mach number flows.

Section~\ref{local bifurcation section} is devoted to the proof of Theorem~\ref{intro JRE theorem} on the existence of lightly compressible periodic and solitary waves and the validity of the Janzen--Rayleigh expansion. The main work is simply to formulate the problem correctly so that the (local) analytic implicit function theorem can be applied at a nondegenerate incompressible wave. We show that large families of such waves exist, and in fact that these give rise to two-dimensional manifolds of lightly compressible waves. In Section~\ref{global bifurcation section}, we take up the proof of Theorem~\ref{intro global periodic theorem}. The basic strategy is to make a global continuation of the families of lightly compressible periodic waves. This machinery relies crucially on certain compactness properties that in turn are derived from the ellipticity of the system in the Agmon--Douglis--Nirenberg sense. We prove that this relates directly to the flow being subsonic~\eqref{intro subsonic flow}, which is precisely why the development of a sonic point~\eqref{intro transsonic limit} is one of the alternatives for the limiting behavior of the global curve. This analysis is strongly inspired by the work of Haziot and Wheeler~\cite{haziot2023large} on large-amplitude (incompressible) solitary water waves with constant vorticity, who also used conformal mappings and elliptic systems regularity theory. Here, however, the problem is quasilinear, whereas in their paper it is semilinear, which in particular breaks a number of bootstrapping arguments.

Finally, in Section~\ref{front section}, we consider the existence of lightly compressible fronts. The underlying argument is again based on applying the implicit function theorem. However, there are substantial new difficulties in this setting. For instance, conformal mappings are no longer an attractive means of fixing the domain, as each fluid layer would in principle require its own mapping, and these will in general give conflicting parameterizations of the shared interface. We instead use a naïve flattening transform, performing an $x$-dependent vertical scaling to force the layers to be of uniform width. The resulting equations are naturally far more difficult to analyze. Because fronts have distinct limits  upstream and downstream, the functional analytic setup is likewise more subtle than for either the periodic or solitary wave cases. Ultimately, we are able to show that nondegeneracy in the stream function formulation of the incompressible problem implies nondegeneracy in our formulation, so that the implicit function theorem yields the lightly compressible fronts.

\section{Preliminaries}
\label{preliminaries section}

\subsection{Notation}
\label{notation section}

For $\Omega \subset \mathbb{R}$ or $\mathbb{R}^2$, $k \geq 0$, and $\beta \in (0,1)$, the space $C^{k+\beta}(\Omega)$ consists of $k$-times continuously differentiable functions on $\Omega$ whose $k$-th order derivatives are Hölder continuous with exponent $\beta$.

 For any of these spaces, a subscript of ``c'' indicates the subset of functions whose support is compactly contained inside $\Omega$. When $\Omega$ is unbounded, for clarity we use a subscript of ``$\bdd$'' to indicate functions that are uniformly bounded in that their norm on $\Omega$ is finite. Conversely, a subscript of ``$\loc$'' denotes the set of functions whose norm is bounded on each compact subset of $\Omega$. Thus, for example, 
\[
	C_\bdd^{k+\beta}(\Omega) \colonequals \left\{ u \in C^{k+\beta}(\Omega) : \| u \|_{C^{k+\beta}(\Omega)} < \infty\right\},
\]
where $\| \placeholder \|_{C^{k+\beta}(\Omega)}$ is the standard Hölder norm on $\Omega$.

\subsection{Analytic global bifurcation theory}

Here, we state the version in \cite[Theorem B.1]{chen2023global}, which essentially rephrases the classical analytic global bifurcation theory of Dancer~\cite{dancer1973bifurcation,dancer1973globalstructure} and Buffoni and Toland~\cite{buffoni2003analytic}. 

\begin{theorem}[Global implicit function theorem] \label{homoclinic global ift} 
Let $\mathscr{W}$ and $\mathscr{Z}$ be Banach spaces, $\mathcal{O} \subset \mathscr{W} \times \mathbb{R}$ an open set containing a point $(w_0, \lambda_0)$. Suppose that  $\mathscr{H} \colon \mathcal{O} \to \mathscr{Z}$ is real analytic and satisfies 
  \begin{equation}
    \mathscr{H}(w_0, \lambda_0) = 0, \qquad \mathscr{H}_w(w_0, \lambda_0) \colon \mathscr{W} \to \mathscr{Z} \quad \textrm{is an isomorphism}.  \label{homoclinic global ift assumptions} 
  \end{equation}
 Then there exist a curve $\mathscr{K}$ that admits the global $C^0$ parameterization  
 \[ \mathscr{K} \colonequals  \left\{ (w(s), \lambda(s)) : s \in \mathbb{R}  \right \} \subset \mathscr{H}^{-1}(0) \cap \mathcal{O},\]
  and satisfies the following.  
  \begin{enumerate}[label=\rm(\alph*)]
  \item \label{K well behaved} At each $s \in \mathbb{R}$, the linearized operator $\mathscr{H}_w(w(s), \lambda(s)) \colon \mathscr{W} \to \mathscr{Z}$ is Fredholm index $0$.
  \item \label{K alternatives} One of the following alternatives holds as $s \to \infty$ and $s \to -\infty$.
    \begin{enumerate}[label=\rm(A\arabic*$'$)]
    \item  \label{K blowup alternative}
      \textup{(Blowup)}  The quantity 
      \begin{align}
        \label{K blowup}
        N(s)\colonequals  \n{w(s)}_{\mathscr{W}} + |\lambda(s)| +\frac 1{\dist((w(s),\lambda(s)), \, \dell \mathcal{O})} \to \infty.
      \end{align}
    \item \label{K loss of compactness alternative} \textup{(Loss of compactness)} There exists a sequence $s_n \to \pm\infty$ with $\sup_n N(s_n) < \infty$, but $( w(s_n), \lambda(s_n) )$ has no convergent subsequence in $\mathscr{W} \times \mathbb{R}$.  
      \item \label{K loss of fredholmness alternative} \textup{(Loss of Fredholmness)}    There exists a sequence $s_n \to \pm\infty$
        with $\sup_{n} N(s_n) < \infty$ and so that $(w(s_n), \lambda(s_n)) \to (w_*, \lambda_*) \in \mathcal{O}$ in $\mathscr{W} \times \mathbb{R}$, however $\mathscr{H}_w(w_*, \lambda_*)$ is not semi-Fredholm.  
   \item \label{K loop} \textup{(Closed loop)} There exists $T > 0$ such that $(w(s+T), \lambda(s+T)) = (w(s), \lambda(s))$ for all $s \in (0,\infty)$. 
        \end{enumerate}
          \item \label{K reparam} Near each point $(w(s_0),\lambda(s_0)) \in \mathscr{K}$, we can locally reparametrize $\mathscr{K}$ so that $s\mapsto (w(s),\lambda(s))$ is real analytic.
 \item \label{K maximal part} The curve $\mathscr{K}$ is maximal in the sense that, if $\mathscr{J} \subset \mathscr{H}^{-1}(0) \cap \mathcal{O}$ is a locally real-analytic curve containing $(w_0,\lambda_0)$ and along which  $\mathscr H_w$ is Fredholm index $0$, then $\mathscr{J} \subset \mathscr{K}$. 
  \end{enumerate}
\end{theorem}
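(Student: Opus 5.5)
This result is quoted from \cite[Theorem B.1]{chen2023global}. I would prove it by following the analytic global bifurcation theory of Dancer and Buffoni--Toland, with the implicit function theorem starting point $(w_0,\lambda_0)$ playing the role usually taken by a bifurcation point.

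\emph{Local step.} Since $\mathscr{H}$ is real analytic and $\mathscr{H}_w(w_0,\lambda_0)$ is an isomorphism, the analytic implicit function theorem gives a real-analytic curve $\lambda \mapsto (w(\lambda),\lambda)$ of zeros near $\lambda_0$. This curve extends in two directions, one for $s>0$ and one for $s<0$, and each direction is treated identically.

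\emph{Structure of the solution set.} Let $\mathcal{S}$ be the set of points of $\mathscr{H}^{-1}(0)\cap\mathcal{O}$ at which $\mathscr{H}_w$ is Fredholm of index $0$. Near any $(w_*,\lambda_*)\in\mathcal{S}$, I would perform a Lyapunov--Schmidt reduction. The kernel $\kernel \mathscr{H}_w(w_*,\lambda_*)$ and a complement of the range are finite dimensional of equal dimension. Solving the range equation by the analytic implicit function theorem reduces $\mathscr{H}=0$ locally to a finite system of real-analytic equations in finitely many variables. The zero set near $(w_*,\lambda_*)$ is therefore a finite-dimensional real-analytic variety.

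\emph{Route of distinguished arcs.} Following Buffoni--Toland, I would use the structure theory of analytic varieties (Łojasiewicz stratification together with Puiseux-type expansions of one-dimensional branches). At any point reached along an analytic arc, the variety contains only finitely many one-dimensional analytic branches emanating from that point. This allows one to select a distinguished continuation that is real analytic after local reparametrization. Concatenating these arcs yields a continuous parameterization $s\mapsto(w(s),\lambda(s))$. Property~\ref{K reparam} follows from the construction. For maximality~\ref{K maximal part}, I would show that any locally analytic curve $\mathscr{J}$ through $(w_0,\lambda_0)$ along which $\mathscr{H}_w$ is Fredholm index $0$ must coincide, arc by arc, with the distinguished arcs. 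This uses the identity theorem for analytic curves in the finite-dimensional reduced variety, since two analytic branches that agree on an open set agree everywhere they are defined.

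\emph{Alternatives.} As $s\to\infty$, suppose \ref{K blowup alternative}, \ref{K loss of compactness alternative} and \ref{K loss of fredholmness alternative} all fail. Then there is a sequence $s_n\to\infty$ with $\sup_n N(s_n)<\infty$ along which the points converge to some $(w_*,\lambda_*)\in\mathcal{O}$, and $\mathscr{H}_w(w_*,\lambda_*)$ is semi-Fredholm. Because $\mathscr{H}_w$ is Fredholm of index $0$ along $\mathscr{K}$ and the index is locally constant on the semi-Fredholm set, the limit is Fredholm of index $0$, so $(w_*,\lambda_*)\in\mathcal{S}$.

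Near $(w_*,\lambda_*)$ the reduced variety contains only finitely many analytic branches. The curve cannot accumulate at $(w_*,\lambda_*)$ infinitely often without reusing a branch. Once a branch is reused, the distinguished route repeats by uniqueness of the continuation, which forces periodicity and hence the closed loop~\ref{K loop}.

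I expect the main obstacle to be this last dichotomy: showing that the distinguished route is well defined through singular points of the variety, and that recurrence forces an exact loop rather than some more complicated accumulation. For this I would follow the counting argument of \cite[Chapter 9]{buffoni2003analytic} nearly verbatim, checking that replacing the bifurcation point by an implicit function point only simplifies the start of the argument.
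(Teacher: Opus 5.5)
Your proposal is correct and is essentially the argument the paper relies on. The paper gives no self-contained proof; it defers to the adaptation of \cite[Theorem 6.1]{chen2018existence}, sketched in \cite[Appendix B]{chen2023global}, which is the Dancer--Buffoni--Toland distinguished-arc construction started from an implicit-function-theorem point rather than a bifurcation point, with Lyapunov--Schmidt reduction at zeros where $\mathscr{H}_w$ is Fredholm of index $0$. Your reduction of the alternatives matches that adaptation and the paper's remark that loss of semi-Fredholmness is equivalent to the index ceasing to be $0$: you upgrade a semi-Fredholm limit to Fredholm index $0$ by local constancy of the index, then use the finitely many half-branches of the nondegenerate set near that limit to force reuse of an arc and hence a closed loop.
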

This theorem follows from a straightforward adaptation of \cite[Theorem 6.1]{chen2018existence}.  A sketch of the argument can be found in \cite[Appendix B]{chen2023global}.  Note that here \ref{K loss of fredholmness alternative} refers to the loss of semi-Fredholmness, which by continuity of the index is equivalent to the index ceasing to be $0$ as stated in~\cite{chen2023global}.

Let us also mention some further structural properties of the global curve $\km$ from Theorem~\ref{homoclinic global ift}. Denote by
\[
	\mathscr{N} \colonequals \{ (w,\lambda) \in \mathscr{H}^{-1}(0) : \mathscr{H}_w(w,\lambda) \colon \Wspace \to \Zspace \textrm{ is an isomorphism} \}
\]
the set of nondegenerate points along the curve. The maximal connected components of $\mathscr{N}$ are called \emph{distinguished arcs}. By the implicit function theorem, each of these is a real-analytic one-dimensional manifold. In the proof of Theorem~\ref{homoclinic global ift}, it is shown that there are at most countably many distinguished arcs, $\{ \mathscr{A}_j \}$, and $\km = \bigcup_{j} \overline{\mathscr{A}_j}.$
The degenerate points $\km \setminus \mathscr{N}$ form a discrete set.

\section{Lightly compressible periodic and solitary water waves}
\label{local bifurcation section}

In this section, we prove that one can ``generically'' take a periodic or solitary wave solution to the incompressible problem~\eqref{steady Euler system} with $\mu = 0$, and find nearby traveling waves of the same type that are lightly compressible in that $0 < |\mu| \ll 1$. These solutions are, moreover, parameterized locally by the Mach number and we rigorously justify the validity of expressing them as an (infinite) power series in $\mu$, as was formally carried out by Janzen~\cite{janzen1913beitrag} and Rayleigh~\cite{rayleigh1916flow}.

\subsection{Periodic waves}
\label{lightly compressible periodic waves section}

Consider first the case of periodic water waves.

\subsubsection*{Conformal reformulation}

\begin{figure}
	\includegraphics{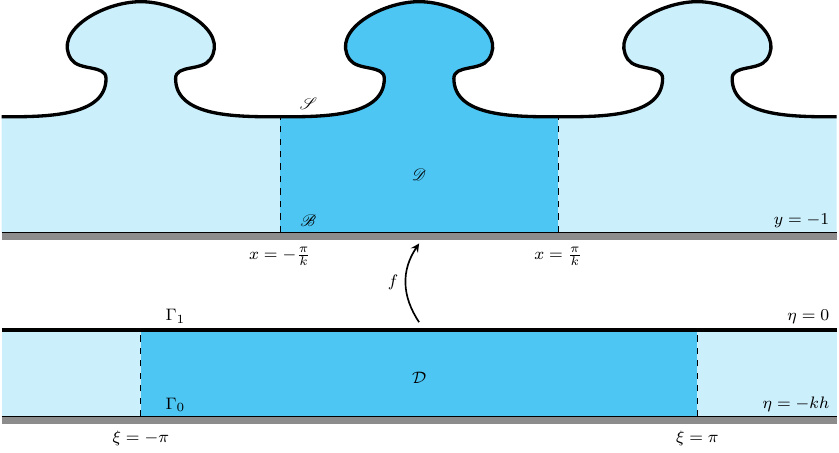}
	\caption{The physical domain $\fluidD$ in the $(x,y)$-plane and conformal domain $\confD$ in the $\zeta$-plane. A single period in each domain is shaded in darker blue. Note that in general, the upper boundary $\fluidS$ in the physical coordinates need not be the graph of a single-valued function of $x$. We are depicting an overhanging interface (in the physical variables) to emphasize that these are permitted with our formulation.}
		\label{periodic setup figure}
\end{figure}

As usual, a preliminary step in analyzing free boundary problems such as~\eqref{steady Euler system} is to make a change of variables recasting it in terms of functions defined on a fixed domain. There are many ways to achieve this, each with their own advantages. Here, we exploit the two-dimensionality of the problem to use a conformal mapping. That is, we identify the fluid domain $\fluidD$ in the physical $(x,y)$ variables with a subset of the complex plane, and view it as the image under an a priori unknown conformal mapping $f$ of a canonical reference domain $\confD$, which is a subset of the complex $\zeta$-plane. We will always write $\zeta = \xi + i \eta$, identifying the complex $\zeta$-plane with the real $(\xi,\eta)$-plane.

Suppose that we are given a $2\pi/k$-periodic solution to the irrotational and incompressible water wave problem for some wave number $k > 0$.  There exists a unique $h > 0$, called the \emph{conformal depth} of the fluid domain, for which there is a conformal mapping $f$ with domain 
\be
\label{periodic conformal domain definition}
	\confD \colonequals \left\{ \zeta = \xi + i \eta \in \mathbb{C} : -k h < \eta < 0 \right\},
\ee
and such that
\be
\label{correspondence conformal and physical domains}
	 \fluidD = f(\confD), \qquad f(\confB) = \fluidB, \qquad f(\confS) = \fluidS, \qquad f(\placeholder + 2\pi) = f + \frac{2\pi}{k}.
\ee
Here, 
\[
	\Gamma_0 \colonequals \{ \eta = -kh \}, \qquad \Gamma_1 \colonequals \{ \eta = 0 \}
\]
are the boundary components of $\confD$. In fact, one can verify that
\[
	h = \frac{1}{2\pi} \int_{-\pi}^{\pi} \imagpart f(\xi,0) \, d\xi. 
\] 
This setup is illustrated in Figure~\ref{periodic setup figure}.

For irrotational incompressible flow, the relative potential in the conformal variables will be a (real) scalar multiple of $\xi$. With that in mind, for the compressible case, we write
\[
	\phi \circ f \equalscolon u + \frac{m}{kh} \xi,
\]
where the new unknown $u = u(\xi,\eta)$ will be $2\pi$-periodic in $\xi$ and normalized so that it has mean zero over one period on the upper boundary. The constant factor multiplying $\xi$ is motivated by the observation that in the incompressible setting where $u = 0$, the relative stream function will be the harmonic conjugate of $\phi \circ f$, namely $\zeta \mapsto m\eta/kh$. Thus $-m$ is the volumetric mass flux in that case. To simplify notation, denote
\[
	\lambda \colonequals \frac{m}{kh},
\]
which serves as the bifurcation parameter in the existence theory for incompressible periodic waves.

We now take as our unknowns the function $u$ given above, and 
\[
	w \colonequals \imagpart{\left(f - \frac{1}{kh}\id\right)}.
\]
Note that $w$ is then $2\pi$-periodic in $\xi$, and it can be used to reconstruct the entire conformal mapping $f$.  In light of~\eqref{steady Euler} and the analyticity of $f$, in the interior of the conformal domain we see that  
\bse
\label{conformal governing equations periodic}
\begin{equation}
\label{qpde periodic}
\left\{
\begin{aligned}
	\Delta u & =  \mu \left(  (\gamma-1) \Delta u + (\nabla u + \lambda \mathbf{e}_1)  \cdot \nabla \right) \left( \frac{1}{2} \frac{|\nabla u + \lambda \mathbf{e}_1|^2}{|\nabla w+\frac{1}{kh}\mathbf{e}_2|^2} + \frac{1}{F^2} (w+\eta)  \right)  \\
	\Delta w & = 0
\end{aligned}
\right.
\qquad \textrm{in } \confD.
\end{equation}
The kinematic condition~\eqref{steady kinematic condition}--\eqref{steady kinematic condition-bed} on both boundary components likewise becomes 
\begin{equation}
\label{kinematic condition periodic}
	\partial_\eta u = 0 \qquad \textrm{on } \partial \confD.
 \end{equation}
 To ensure that the bottom boundary $\Gamma_0$ is mapped to the physical bed at $\{ y=-1\}$, we must have that
 \be
 \label{bed condition periodic}
 	w =0 \qquad \text{on } \Gamma_0.
\ee
The dynamic condition~\eqref{steady dynamic condition} accordingly takes the form
\begin{equation}
\label{cbernoulli periodic}
    \frac{1}{2} \frac{| \nabla u + \lambda \mathbf{e}_1|^2}{|\nabla w+\frac{1}{kh}\mathbf{e}_2|^2} + \frac{1}{F^2}w = Q \qquad \textrm{on } \Gamma_1.
\end{equation}
For periodic waves, $Q$ will be treated as one of the unknowns that varies along the solution family to be constructed. Lastly, to maintain the spatial period, we fix the conformal mean depth along the solution curve.  This is accomplished by requiring
\be
\label{conformal mean depth equation}
	\frac{1}{2\pi} \int_{-\pi}^\pi u(\xi,0) \, d\xi  = \frac{1}{2\pi} \int_{-\pi}^\pi w(\xi,0) \, d\xi = 0.
\ee  
\ese

Likewise, the full Bernoulli law~\eqref{intro unsteady Bernoulli equation}, non-dimensionalized, specialized to the steady problem, and pulled back to conformal domain is
\be
\label{conformal Bernoulli law}
	\frac{1}{2} \frac{|\nabla u + \lambda \mathbf{e}_1|^2}{|\nabla w+\frac{1}{kh}\mathbf{e}_2|^2} + \frac{1}{F^2} (w+\eta) + H(\varrho) = B  \qquad \textrm{in } \confD,
\ee
where $\varrho = \rho \circ f$ is the pullback of the density to the conformal domain. Given $(u,w,Q)$, the above equation allows us to reconstruct $\varrho$. 

\subsubsection*{Abstract operator equation}

In order to apply the implicit function theorem (and eventually the global implicit function theorem), we now reconfigure the governing equations~\eqref{conformal governing equations periodic} as an abstract operator equation of an appropriate type for the unknown $(u,w,Q)$.  

The kinematic condition~\eqref{kinematic condition periodic} as well as the normalization of the bed \eqref{bed condition periodic} and conformal mean depth \eqref{conformal mean depth equation} are linear and can therefore be encoded in the function spaces.  Define
\be
\label{definition X space and Y space periodic}
\begin{aligned}
	\Xspace_\per & \colonequals \left\{ (u, w) \in \mathring{C}_{\odd,\per}^{2+\beta}(\overline{\confD}) \times  \mathring{C}_{\even,\per}^{2+\beta}(\overline{\confD}) : ~ \partial_\eta u|_{\Gamma_0\cup\Gamma_1}=0, ~w|_{\Gamma_0}=0  \right\}, \\
		& \equalscolon \Xspace_{1\per} \times \Xspace_{2\per} \\
	\Yspace_{\per} &\colonequals  C_{\odd,\per}^{\beta}(\overline{\confD})\times C_{\even,\per}^{\beta}(\overline{\confD}) \times C_{\even,\per}^{1+\beta}(\Gamma_1) \equalscolon \Yspace_{1\per} \times \Yspace_{2\per} \times \Yspace_{3\per} .
\end{aligned}
\ee
Here, the subscript $\per$ indicates $2\pi$-periodicity in $\xi$, and $\mathring{C}_\per^{\ell + \beta}$ denotes the subspace of $C_\per^{\ell+\beta}$ consisting of functions that have vanishing mean over a period on $\Gamma_1$.  Similarly, subscripts of ``e'' or ``o'' indicate the subspace of functions that are even or odd in $\xi$, respectively. This restriction is needed to remove the translation symmetry.  

The full problem~\eqref{conformal governing equations periodic} is then equivalent to the abstract operator equation
\begin{equation}
\label{meqn periodic}
\left\{
\begin{aligned}
      \F(u,w,Q,\lambda; \mu) &= 0 \\
      \G(u,w,Q,\lambda; \mu) &= 0, 
\end{aligned}
\right.
\end{equation}
where $(\F,\G)$ is the real-analytic mapping $\mathcal{O} \subset \Xspace_{\per} \times \mathbb{R}^3 \to \Yspace_{\per}$ given by 
\be
\label{definition abstract F}
	\F(u,w,Q,\lambda; \mu) \colonequals
		\begin{pmatrix}
			\Delta u - \mu \left(  (\gamma-1) \Delta u + (\nabla u  + \lambda \mathbf{e}_1) \cdot \nabla  \right)  \left( \frac{1}{2} \frac{|\nabla u + \lambda \mathbf{e}_1|^2}{|\nabla w+\frac{1}{kh}\mathbf{e}_2|^2} + \frac{1}{F^2} (w+\eta) \right) \\
			\Delta w
		\end{pmatrix},
\ee
and
\[
	\G(u,w,Q,\lambda; \mu) \colonequals \left( \frac{1}{2} \frac{|\nabla u + \lambda \mathbf{e}_1|^2}{|\nabla w+\frac{1}{kh}\mathbf{e}_2|^2} + \frac{1}{F^2} w -Q \right)\Big|_{\Gamma_1}.
\]
The domain of $(\F,\G)$ is the open set $\mathcal{O}$; it is chosen so that membership enforces the ellipticity of the system, which physically relates to be the flow being subsonic, and also that the mapping $f$ is conformal and globally injective. The latter of these facts is necessary in order to reconstruct solutions of the original problem from that of the conformal formulation. Specifically, for $\delta > 0$, we set
\be
\label{definition periodic O delta}
\begin{aligned}
	\mathcal{O}_\delta  \colonequals  \Big\{ (u,w,Q,\lambda, \mu) & \in \Xspace_\per \times \mathbb{R}^3  :   \\
	& \qquad \sup_{\Gamma_1} \frac{1}{w+\eta+kh}  + \sup_{\substack{\zeta, \zeta^\prime \in  \Gamma_1 \\ \zeta \neq \zeta^\prime}} \frac{|\zeta - \zeta^\prime|}{|w(\zeta) + \zeta - w(\zeta^\prime) - \zeta^\prime|}   < \frac{1}{\delta},  \\
	&  \qquad \sup_{\confD} \left( \frac{\gamma+1}{2}\frac{|\nabla u+\lambda\mathbf{e}_1|^2}{|\nabla w+\frac{1}{kh}\mathbf{e}_2|^2} + \frac{(\gamma-1)(w+\eta)}{F^2} \right)
< \frac{1}{|\mu|} - \delta, \\
	& \qquad \inf_{\Gamma_1} \frac{(\partial_\xi u + \lambda)^2}{|\nabla w + \frac{1}{kh}\mathbf{e}_2|^3} > \delta \Big\}, 
\end{aligned}
\ee
where the third inequality is assumed to hold automatically when $\mu = 0$. We then define $\mathcal{O} \colonequals \bigcup_{\delta > 0} \mathcal{O}_\delta$. Notice that the first term controlled by $\delta$ in~\eqref{definition periodic O delta} gives (quantitative) bounds on the distance between the free boundary and the bed. The second term is a chord-arc distance, which gives both a lower bound on $|\partial_\zeta f|$ along $\Gamma_1$ and implies injectivity of $f$ along $\Gamma_1$. The third term is an ellipticity condition in the Agmon--Douglis--Nirenberg sense; see Section~\ref{ellipticity section}. Of course, it will hold automatically for $|\mu| \ll 1$, but it will become relevant for the global bifurcation argument to be undertaken later. The fourth condition excludes (relative) stagnation points on the free surface; it is needed for the Lopatinskii condition, see Proposition~\ref{lopat prop}, and is used in the proof of Lemma~\ref{schauder lemma}.

\begin{lemma}[Equivalence for periodic waves]
\label{periodic equivalence lemma}
Fix $\gamma \geq 1$ and suppose that $(u,w,Q,\lambda,\mu) \in \mathcal{O}$ satisfies the abstract operator equation~\eqref{meqn periodic} with $\mu > 0$. Assume further that either
\begin{enumerate}[label=\rm(\roman*)]
	\item \label{f is conformal case} $\inf_{\confD} |\nabla w+\frac{1}{kh}\mathbf{e}_2| > 0$ or 
	\item \label{connected component case} $(u,w,Q,\lambda,\mu)$ is in a connected component of $\mathcal{O}$ containing a laminar solution $(u_*,0,Q_*,\lambda_*, \mu_*)$.
\end{enumerate}
There exists $\alpha^2 > 0$, a $C^{2+\beta}$  fluid domain $\fluidD$, and relative velocity potential $\phi \in C^{2+\beta}(\overline{\fluidD})$ solving the (non-dimensionalized) steady isentropic Euler equations~\eqref{steady Euler system} and for which the velocity field and domain are $2\pi/k$-periodic in $x$. 
\end{lemma}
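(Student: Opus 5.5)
The plan is to reconstruct the physical solution from $(u,w,Q,\lambda,\mu)$ in three stages. First, build the conformal map $f$. Second, show that $f$ is a conformal diffeomorphism onto a physical domain. Third, push forward the potential and recover $\alpha^2$, $B$, and $\rho$.

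\textbf{Constructing $f$.} Since $w$ is harmonic, $2\pi$-periodic in $\xi$, and vanishes on $\Gamma_0$, we take $v$ to be its harmonic conjugate on the slab, chosen so that $v + i w$ is holomorphic. We then set
\[
	f(\zeta) \colonequals \frac{1}{kh}\zeta + v + i w ,
\]
which gives $\imagpart f = w + \eta/(kh)$, up to the scaling convention of $w$. Periodicity of $\nabla w$, together with the vanishing mean of $w$ on $\Gamma_1$ and $w|_{\Gamma_0} = 0$, implies that $v$ is $2\pi$-periodic up to an additive constant, which we fix by oddness. Hence $f(\placeholder + 2\pi) = f + 2\pi/k$. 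By Schauder theory, $f \in C^{2+\beta}(\overline{\confD})$, and $f$ maps $\Gamma_0$ into $\{y=-1\}$.

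\textbf{Injectivity and conformality (the main obstacle).} The membership $(u,w,Q,\lambda,\mu) \in \mathcal{O}$ already gives two facts on the boundary. The chord-arc bound shows that $f$ is injective on $\Gamma_1$ with $|\partial_\zeta f|$ bounded below there. The first bound keeps $f(\Gamma_1)$ strictly above $\{y=-1\}$. We then apply the argument principle on a period cell, in the form of the Darboux--Picard theorem for periodic strips. Injectivity of $f$ on $\partial\confD$, modulo the period, forces $f$ to be injective on $\overline{\confD}$ and to map $\confD$ onto the region $\fluidD$ between $\fluidB$ and the Jordan curve $\fluidS = f(\Gamma_1)$.

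The remaining issue is that $\partial_\zeta f \neq 0$ in the interior.
\begin{itemize}
	\item In case \ref{f is conformal case}, this is exactly the hypothesis, since $|\partial_\zeta f| = |\nabla w + \frac{1}{kh}\mathbf{e}_2|$.
	\item In case \ref{connected component case}, we argue by continuity along a path in the component joining the point to the laminar solution. For the laminar solution $f$ is affine, so $\partial_\zeta f \equiv 1/(kh) \neq 0$. Along the path, $\partial_\zeta f$ never vanishes on $\partial\confD$, by the chord-arc bound and by $\partial_\xi w = 0$ on $\Gamma_0$, which gives $\partial_\zeta f = 1/kh + \partial_\xi v$ there. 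By the argument principle, the number of zeros of $\partial_\zeta f$ in a period cell is locally constant, hence identically $0$.
\end{itemize}
Making this rigorous requires care on $\Gamma_0$, where nonvanishing is not directly encoded in $\mathcal{O}$. We handle it by reflecting $w$ oddly across $\Gamma_0$, which turns $\Gamma_0$ into an interior line so that the zero count on the doubled strip is stable.

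\textbf{Recovering the physical solution.} We define $\phi \colonequals (u + \lambda\xi) \circ f^{-1}$. Then $\phi \in C^{2+\beta}(\overline{\fluidD})$, with $2\pi/k$-periodic gradient. The chain rule for conformal maps gives
\[
	|\nabla\phi|^2 \circ f = \frac{|\nabla u + \lambda \mathbf{e}_1|^2}{|\partial_\zeta f|^2}, \qquad (\Delta \phi) \circ f = \frac{\Delta u}{|\partial_\zeta f|^2}.
\]
The $|\partial_\zeta f|^{-2}$ factors therefore cancel in the interior equation, and \eqref{qpde periodic} becomes \eqref{steady Euler}. The conditions \eqref{kinematic condition periodic} become \eqref{steady kinematic condition}--\eqref{steady kinematic condition-bed}, and \eqref{cbernoulli periodic} becomes \eqref{steady dynamic condition}.

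Finally, since $\mu > 0$, we set $\alpha^2 = 1/\mu$ when $\gamma = 1$, and $B = 1/((\gamma-1)\mu)$ when $\gamma > 1$, choosing $\alpha^2$ consistently with the surface normalization $H(1) = B - Q$. The density $\rho$ is then defined by \eqref{nondimensional Bernoulli law}. Positivity of $\rho$ in the polytropic case, i.e.\ \eqref{intro subsonic flow}, follows from the third condition in \eqref{definition periodic O delta}, because $\frac{\gamma+1}{2}|\nabla\phi|^2 + (\gamma-1)y/F^2 < (\gamma-1)B$. Reversing Blokhintsev's derivation of \eqref{dimensional BLL equation} then yields mass conservation, and hence \eqref{isentropic Euler equations}.
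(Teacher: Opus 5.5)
Your proposal is correct and has the same architecture as the paper's proof. You build $f$ from $w$, get $\partial_\zeta f \neq 0$ either from hypothesis (i) or from an argument-principle continuity argument along a path to the laminar solution in case (ii), apply Darboux--Picard, and then set $\phi = (u+\lambda\xi)\circ f^{-1}$ and $\alpha^2 = 1/\mu - (\gamma-1)Q$ and recover $\rho$ from Bernoulli's law.

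The genuine difference is how you control $\partial_\zeta f$ on the bed $\Gamma_0$, which $\mathcal{O}$ does not encode directly. You reflect $w$ oddly across $\Gamma_0$, so that $\Gamma_0$ becomes an interior line of a doubled strip. Since $\partial_\zeta f$ at a mirror point is the conjugate of its value at the original point, both boundary lines of the doubled strip carry the chord-arc lower bound. The zero count on a doubled period cell is then stable along the path. This is valid and uses only the chord-arc condition, but it requires the path. The paper instead uses the first (depth) condition in $\mathcal{O}$: $w+\eta+kh$ is harmonic, vanishes on $\Gamma_0$, and is positive on $\Gamma_1$. The maximum principle and Hopf lemma then give $\partial_\eta(w+\eta)>0$ on $\Gamma_0$. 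That argument is pointwise at every element of $\mathcal{O}$, and it also gives strict monotonicity of $\realpart f$ on $\Gamma_0$.

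That monotonicity is exactly what your ordering lacks. You invoke Darboux--Picard using injectivity of $f$ on all of $\partial\confD$, but at that stage you only know injectivity on $\Gamma_1$. There are two fixes:
\begin{itemize}
\item Move Darboux--Picard after the conformality step, as the paper does. On $\Gamma_0$ one has $\partial_\zeta f = 1/(kh) + \partial_\eta w$, whose period average is $1/(kh)$, so nonvanishing forces $\realpart f$ to be strictly increasing there.
\item Alternatively, count preimages with $\exp(ikhf)$. Then the image of $\Gamma_0^0$ is a degree-one loop on a circle, and injectivity there is not needed.
\end{itemize}

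Conversely, once Darboux--Picard gives injectivity in $\confD$, nonvanishing of $\partial_\zeta f$ in the interior is automatic. So in your ordering the case split only does work on $\Gamma_0$, and the Hopf argument handles that without either hypothesis.

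Finally, record that $\alpha^2>0$ in the polytropic case. Evaluate your inequality $\frac{\gamma+1}{2}|\nabla\phi|^2 + (\gamma-1)y/F^2 < (\gamma-1)B$ on $\fluidS$, where $\frac12|\nabla\phi|^2 + y/F^2 = Q$; this gives $(\gamma-1)(B-Q)>0$. Neither you nor the paper states this explicitly.
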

\begin{proof}
Let $f$ be the unique holomorphic mapping on $\confD$ with imaginary part $w$ and such that $f - \id$ is $2\pi$-periodic in $\xi$. By the definition of $\mathcal{O}$ in \eqref{definition periodic O delta}, we see that $f$ is injective on $\partial\confD$. If~\ref{f is conformal case} holds, then $f$ is conformal on $\overline{\confD}$; we claim the same is true in the case~\ref{connected component case} by a continuity argument. 

Indeed, suppose that there is a continuous curve $\{ (u(s), w(s), Q(s), \lambda(s), \mu(s)) : s \in [0,1] \} \subset \mathcal{O}$ with $w(0) =0$ and $w(s) = w$. Denote by $f(s)$ the corresponding holomorphic mapping obtained from $w(s)$. Let $\mathcal{D}^0 \colonequals (-\pi,\pi) \times (-kh,0)$ be a single period of the conformal domain and consider the contour $\mathcal{C}$ corresponding to its boundary given positive orientation. For each $s \in [0,1]$, we can slightly deform the lateral boundaries of $\mathcal{C}$ so that they avoid any zeros of $\partial_\zeta f(s)$, and each side is a $2\pi$ translation of the other in the $\xi$ direction; call the resulting contour $\mathcal{C}(s)$.  By the definition of $\mathcal{O}$, we know that $\partial_\zeta f(s)$ is nonvanishing on the upper boundary $\Gamma_1^0$ of $\confD^0$. Since $w(s) + \eta + kh$ is harmonic in $\confD$, vanishes on $\Gamma_0$, and is bounded below by $\delta > 0$ on $\Gamma_1$ by the definition of $\mathcal{O}$, the maximum principle gives $w(s)+\eta+kh > 0$ in $\confD$. The Hopf lemma then implies $\partial_\eta (w(s) + \eta) > 0$ on $\Gamma_0$, and hence $|\partial_\zeta f(s)| > 0$ on the lower boundary $\Gamma_0^0$ of $\confD^0$, meaning $\mathcal{C}(s)$ contains no zeros of $\partial_\zeta f(s)$. 
On the other hand, $\partial_\zeta f(s)$ is bounded, so clearly it has no poles. By periodicity and the Cauchy argument principle, we therefore have that
\[
	\frac{1}{2\pi i} \int_{\Gamma_0^0} \frac{\partial_\zeta^2 f(s)}{\partial_\zeta f(s)} \, d\zeta - \frac{1}{2\pi i}\int_{\Gamma_1^0} \frac{\partial_\zeta^2 f(s)}{\partial_\zeta f(s)} \, d\zeta = \frac{1}{2\pi i}\int_{\mathcal{C}(s)} \frac{\partial_\zeta^2 f(s)}{\partial_\zeta f(s)} \, d\zeta = \# \{ \zeta \in \mathcal{D}^0 : \partial_\zeta f(s)(\zeta) = 0 \}. 
\]
The quantity on the far right-hand side is the winding number. As it is  integer-valued, by continuity it is independent of $s$. Evaluating the far left-hand side at $f(0) = \id$, we infer that the winding number vanishes for all $s \in [0,1]$, meaning that $\partial_\zeta f(s)$ is nonvanishing for all such $s$.  

Therefore, in both cases~\ref{f is conformal case} and \ref{connected component case}, the mapping $f$ is conformal in $\confD$.  As we have shown $f$ is injective on $\partial\confD$, the Picard--Darboux theorem now implies that $f$ is globally injective on $\overline{\confD}$. We may then define the physical domain $\fluidD$ and its boundary components by~\eqref{correspondence conformal and physical domains}, and set $\phi \colonequals (u + \lambda \xi) \circ f^{-1}$.

In the isothermal case $\gamma=1$, we have that $\alpha^2 = 1/\mu$ and $B = Q$, since $H(1) = 0$. For the polytropic case $\gamma >1$, the value of $\mu$ instead determines $B$, and to find $\alpha$ we use the relation
\[
	Q = B - H(1) = \frac{1}{\gamma-1} \left( \frac{1}{\mu} - \alpha^2 \right),
\]
which gives
\[
	\alpha^2 = \frac{1}{\mu} - (\gamma-1) Q.
\]
Note that this formula holds also in the isothermal case. 

We can now recover the density by inverting the enthalpy in Bernoulli's law:
\[
	\rho =  H^{-1}\left(B-\frac{1}{2} |\nabla \phi|^2 - \frac{1}{F^2} y\right) = \left\{
		\begin{aligned}
			\left( \frac{\gamma-1}{\alpha^2} (B -\frac{1}{2} |\nabla \phi|^2 - \frac{1}{F^2} y ) \right)^{\frac{1}{\gamma-1}} & \qquad \textrm{if } \gamma > 1 \\
			\exp{\left( \frac{1}{\alpha^2}(Q-\frac{1}{2} |\nabla \phi|^2 - \frac{1}{F^2} y ) \right)} & \qquad \textrm{if } \gamma = 1.
		\end{aligned}
		\right.
\]
The argument of $H^{-1}$ on the right-hand side above is positive by the definition of $\mathcal{O}$, and so this gives a positive density. The dynamic condition is therefore satisfied on $\fluidS$, as $\rho =  1$ there thanks to the Bernoulli condition $\G(u,w,Q,\lambda;\mu) = 0$. To see that this satisfies the continuity of mass equation, we rewrite $\F_1(u,w,Q,\lambda;\mu) = 0$ in the physical variables. If $\gamma = 1$, this yields
\[
	\Delta \phi = \mu \nabla \phi \cdot \nabla \left( \frac{1}{2} |\nabla \phi|^2 + \frac{1}{F^2} y \right) = -\frac{1}{\alpha^2} \nabla \phi \cdot \nabla H(\rho) = -\frac{1}{\rho} \nabla \phi \cdot \nabla \rho.
\]
The polytropic case is similar:
\begin{align*}
	\Delta \phi & = \frac{1}{(\gamma-1)B} \left( \nabla \phi \cdot \nabla + (\gamma-1) \Delta \phi \right) \left( B - H(\rho) \right) \\
		& =  \left(1-\frac{H(\rho)}{B} \right) \Delta \phi -\frac{1}{(\gamma-1) B}  \nabla \phi \cdot \frac{\nabla P(\rho)}{\rho}.
\end{align*}
Rearranging slightly and recalling the definition of $P$ in~\eqref{definition of P}, this becomes
\begin{align*}
	0 &  = \frac{\alpha^2}{(\gamma-1)B} \rho^{\gamma-1} \Delta \phi + \frac{\alpha^2}{(\gamma-1) B} \rho^{\gamma-2} \nabla \phi \cdot \nabla \rho,
\end{align*}
which is clearly equivalent to the conservation of mass. Finally, the conservation of momentum follows immediately from taking the gradient of Bernoulli's law and applying conservation of mass.
\end{proof}

\subsubsection*{Local bifurcation}

The well-studied incompressible problem is equivalent to solving the corresponding compressible problem $(\F,\G) = 0$ in the subspace where $u =0$ and $\mu = 0$. We say that $(0,w_0,Q_0,\lambda_0,0) \in \mathcal{O}$ represents a \emph{nondegenerate} incompressible periodic wave provided that 
\be
\label{definition non-degenerate periodic}
\begin{aligned}
	 (0,w_0,Q_0,\lambda_0,0) &\in  (\F_2,\G)^{-1}(0) \cap \mathcal{O}, \\
	 D_{(w,Q)} (\F_2,\G)(0,w_0,Q_0,\lambda_0,0) & \colon \Xspace_{2\per} \times \mathbb{R} \to \Yspace_{2\per} \times \Yspace_{3\per} \textrm{ is an isomorphism}.
\end{aligned}
\ee
The existence of global families of incompressible irrotational traveling waves originating at shear flows and extending to large amplitude is classical~\cite{toland1996stokes}, and this has more recently been extended to allow for vorticity, critical layers, stratification, surface tension, and other physical effects; see the survey~\cite{haziot2022traveling}. For concreteness, we include here a lemma outlining how this can be done using the conformal variable reformulation. As a basic consequence of the analytic global bifurcation theoretic construction, all but a discrete subset of this curve of solutions are nondegenerate in the sense of~\eqref{definition non-degenerate periodic}. 

\begin{lemma}[Incompressible periodic waves]
Fix a positive wave number $k$ and conformal mean depth $h > 0$.
\begin{enumerate}[label=\rm(\alph*\rm)]
	\item \label{CVS existence part} There exists a locally real-analytic curve $\km_{k,h}$ of solutions to the incompressible irrotational water wave problem admitting the global $C^0$ parameterization
	\be
	\label{global curve incompressible periodic}
		\km_{k,h} = \left\{ (0,w(s), Q(s),\lambda(s), 0) : s \in \mathbb{R} \right\} \subset (\F,\G)^{-1}(0) \cap \mathcal{O},
	\ee
	with 
	\[
		(0,w(0),Q(0),\lambda(0),0) = \left(0,0,\tfrac{(\lambda_1^*)^2}{2},\lambda_1^*,0 \right)
	\]
	being a uniform flow.
	\item There exists a discrete set $S \subset \mathbb{R}$ such that for all $s \in \mathbb{R} \setminus S$ we have that $(0,w(s),Q(s),\lambda(s),0)$ is non-degenerate~\eqref{definition non-degenerate periodic}.
\end{enumerate}

\end{lemma}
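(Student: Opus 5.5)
Restricted to the incompressible subspace $u=0$, $\mu=0$, the problem reduces to $(\F_2,\G)(0,w,Q,\lambda,0)=0$ for the unknowns $(w,Q)\in\Xspace_{2\per}\times\mathbb{R}$, with $\lambda$ serving as the bifurcation parameter. The plan is to run a Crandall--Rabinowitz local bifurcation from the family of uniform flows $(0,0,\lambda^2/2,\lambda,0)$, and then continue globally with the analytic global bifurcation theorem of Buffoni--Toland, in the form of Theorem~\ref{homoclinic global ift} adapted to bifurcation from a simple eigenvalue as in \cite{chen2018existence}.

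\textbf{Local bifurcation.} First, observe that $\F_2=\Delta w$ and that, on $\Gamma_1$,
\[
	\G=\tfrac12\lambda^2 (kh)^2/|\nabla w+\tfrac{1}{kh}\mathbf e_2|^2\cdot\tfrac{1}{(kh)^2}+\tfrac{1}{F^2}w-Q.
\]
Hence $w=0$, $Q=\lambda^2/2$ (with the normalization $|\mathbf e_2/kh|$ absorbed appropriately) solves the system for every $\lambda$.

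Next, eliminate $Q$ using the zero-mean condition on $\Gamma_1$ and linearize at $w=0$. Since $w$ is harmonic, vanishes on $\Gamma_0$, and is even and $2\pi$-periodic, it expands in modes $\cos(n\xi)\sinh(n(\eta+kh))$. Using $\partial_\eta w$ at $\Gamma_1$, the linearized Bernoulli condition becomes $-\lambda^2 (kh)\,\partial_\eta w+F^{-2}w=0$ on $\Gamma_1$, up to the precise constants.

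The kernel is nontrivial exactly when $\lambda^2 kh\, n\coth(nkh)=F^{-2}$. Since $n\coth(nkh)$ is strictly increasing in $n$, at most one mode $n=1$ is resonant at $\lambda=\lambda_1^*$. This gives a one-dimensional kernel, and the range is of codimension one by an explicit Fourier computation: the linearized operator is the Dirichlet-to-Neumann map plus a multiple of the identity, which is Fredholm of index zero between $C^{2+\beta}$ and $C^{1+\beta}$.

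Transversality then follows from $\partial_\lambda$ of the symbol being $2\lambda_1^* kh\coth(kh)\neq0$. Crandall--Rabinowitz therefore yields an analytic local curve with $s\mapsto(w(s),Q(s),\lambda(s))$ through $(0,(\lambda_1^*)^2/2,\lambda_1^*)$, lying in $\mathcal O$ for small $s$.

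\textbf{Global continuation.} The main obstacle is verifying the hypotheses of the analytic global theorem: that the linearized operator is Fredholm of index $0$ throughout $\mathcal O$ (restricted to $u=0$, $\mu=0$), and that bounded closed subsets of the zero set in $\mathcal O_\delta$ are compact.

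For the Fredholm property, I would reduce to a scalar nonlocal equation on $\Gamma_1$, namely the Babenko/Dirichlet-to-Neumann formulation. The no-stagnation condition in $\mathcal O_\delta$, $(\lambda)^2/|\nabla w+\frac1{kh}\mathbf e_2|^3>\delta$, makes the principal part $\lambda^2\mathcal{G}(w)$-type elliptic, where $\mathcal G$ is the Dirichlet-to-Neumann operator; this is equivalent to the Lopatinskii condition. Index $0$ then follows by homotopy to the laminar case.

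For compactness, I would use Schauder estimates together with the fact that the problem is semilinear in $w$. A bound on $\|w\|_{C^{2+\beta}}$ in $\mathcal O_\delta$ gives precompactness in $C^{2+\beta'}$, and bootstrapping the elliptic boundary problem upgrades this to convergence in $C^{2+\beta}$. This is standard in the spirit of \cite{constantin2011constant}.

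With these in hand, the Buffoni--Toland theorem gives the global curve $\km_{k,h}$ in \eqref{global curve incompressible periodic}, which is locally real-analytic.

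\textbf{Nondegeneracy off a discrete set.} For part (b), I would invoke the structure described after Theorem~\ref{homoclinic global ift}: $\km_{k,h}$ is the closure of countably many distinguished arcs of points where the relevant linearization is invertible, and the degenerate points form a discrete set. Some care is needed here. The global theorem's invertibility refers to the operator in $(w,Q,\lambda)$ with a parameterization, whereas \eqref{definition non-degenerate periodic} asks for $D_{(w,Q)}(\F_2,\G)$ with $\lambda$ fixed. I would handle this by an analytic argument. Along a real-analytic reparameterization, $D_{(w,Q)}(\F_2,\G)$ is an analytic family of Fredholm index-$0$ operators. Its noninvertibility is characterized locally by the vanishing of an analytic determinant, obtained via a Lyapunov--Schmidt reduction to finite dimensions.

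This determinant is not identically zero on any arc. Indeed, on distinguished arcs where $\lambda'(s)\neq0$ the implicit function theorem applies, and $\lambda'$ is itself analytic and nonzero near the bifurcation point, since $\lambda(s)$ is not constant there. The discreteness of its zeros then gives the set $S$.
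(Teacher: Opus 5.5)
Your argument for part (a) is essentially the paper's. It runs through the laminar flows $(0,0,\lambda^2/2,\lambda,0)$, a Fourier computation giving the dispersion relation, a one-dimensional kernel spanned by the $n=1$ mode with codimension-one range, transversality, and the analytic Crandall--Rabinowitz theorem. It then continues globally, justified by Schauder-type compactness and Fredholm properties. Your deviations are inessential: you eliminate $Q$ by averaging instead of keeping $R=Q-\lambda^2/2$ as an unknown, and you use a Dirichlet-to-Neumann reduction in place of the PDE estimates of Lemmas~\ref{local properness lemma} and~\ref{periodic Fredholmness lemma}.

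Part (b) is where the proposal goes wrong. The mismatch you worry about does not exist. In Theorem~\ref{homoclinic global ift}, membership in $\mathscr N$ means invertibility of the derivative in the unknowns with the parameter frozen. For $\mathscr H(w,R;\lambda)$ this is $D_{(w,R)}\mathscr H = D_{(w,Q)}(\F_2,\G)$ at fixed $\lambda$, which is exactly the operator in \eqref{definition non-degenerate periodic}. So once the global curve is generated from a nondegenerate point, (b) is just the discreteness of $\km\setminus\mathscr N$.

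Your substitute argument also relies on the implication ``$\lambda'(s)\neq0$, hence the implicit function theorem applies,'' which is false in general. The implication runs the other way: at a nondegenerate point the zero set is locally a graph over $\lambda$. At a crossing of two branches (think of $x^2=\lambda^2$ at the origin), the linearization is singular even though $\lambda'\neq0$ along both branches. For the same reason, the degenerate set is not the zero set of $\lambda'$.

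The step that genuinely needs proof is that the Crandall--Rabinowitz points with $0<|s|\ll1$ are nondegenerate, so that a first distinguished arc exists. The paper also passes over this with ``perhaps shrinking the curve slightly.'' The correct tool is the Lyapunov--Schmidt (exchange-of-stability) computation. Write the reduced equation as $s\,h(\lambda,s)=0$ with $\partial_\lambda h(\lambda_1^*,0)\neq0$ by transversality. Along $h(\lambda(s),s)=0$ the reduced derivative equals $-s\lambda'(s)\,\partial_\lambda h$, so near the bifurcation point nondegeneracy is equivalent to $s\lambda'(s)\neq0$.

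Even then you need $\lambda\not\equiv\lambda_1^*$ on the local curve, which you assert without justification. This requires either computing a nonvanishing Taylor coefficient of $\lambda(s)-\lambda_1^*$ or some other argument excluding vertical bifurcation.
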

\begin{proof}
We only sketch the argument in the interest of keeping the presentation self-contained. First, observe that for any $\lambda$, we have the trivial solution 
\[
	(u, w, Q, \lambda,\mu) =  (0, 0, \tfrac{\lambda^2}{2}, \lambda, 0),
\]
which physically corresponds to an incompressible water wave for which the interface is completely flat and the velocity field is purely constant and horizontal with speed $\lambda$. Linearizing $(\F,\G)$ at this point yields the mapping
\be
\label{linearized F periodic}
	D_{(u,w,Q)} \F(0, 0, \tfrac{\lambda^2}{2},\lambda;0) 
		\begin{pmatrix} 
			\dot u \\
			\dot w \\
			\dot Q
		\end{pmatrix}
			=
		\begin{pmatrix}
			\Delta \dot u \\
			\Delta \dot w
		\end{pmatrix},
\ee
and
\be
\label{linearized G periodic}
		D_{(u,w,Q)} \G(0,0,\tfrac{\lambda^2}{2},\lambda;0) 
		\begin{pmatrix} 
			\dot u \\
			\dot w \\
			\dot Q
		\end{pmatrix}
			=  \left( -\lambda^2 \partial_\eta \dot w + \lambda \partial_\xi \dot u + \frac{1}{F^2} \dot w - \dot Q \right)\Big|_{\Gamma_1}.
\ee
Let 
\[
\begin{aligned}
	& \mathscr{H}(w, R; \lambda) 
	\\
	& \qquad \colonequals \left(\F_2(0,w,R+\tfrac{\lambda^2}{2},\lambda; 0),~ \G(0,w,R+\tfrac{\lambda^2}{2},\lambda; 0) \right) \colon \tilde{\mathcal{O}} \subset \Xspace_{2\per} \times \mathbb{R}^2 \to \Yspace_{2\per} \times \Yspace_{3\per},
\end{aligned}
\]
for the neighborhood
\[
	\tilde{\mathcal{O}} \colonequals \{ (w,R,\lambda) \in \Xspace_{2\per} \times \mathbb{R}^2 : (0,w,R+\tfrac{\lambda^2}{2},\lambda,0) \in \mathcal{O} \}.
\]
Each element $(w,R,\lambda)$ of the zero-set of $\mathscr{H}$ represents an incompressible periodic wave with $Q = R+\lambda^2/2$. Clearly, $\mathscr{H}$ is real analytic, and 
\[
	\mathscr{H}(0,0,\lambda) = 0 \quad \textrm{for all } \lambda \in \mathbb{R}, \qquad 
	D_{(w,R)} \mathscr{H}(0,0,\lambda) \begin{pmatrix} \dot w \\ \dot R \end{pmatrix}
		 = 
		 \begin{pmatrix} 
		 	\Delta \dot w \\
			\left( -\lambda^2 \partial_\eta \dot w + \frac{1}{F^2} \dot w -\dot R\right)|_{\Gamma_1}
		\end{pmatrix}.
\] 
Elliptic regularity implies that this is a Fredholm operator. To compute its index, we expand $\dot w$ as a Fourier cosine series. A routine calculation then shows that the dimension of the kernel is given exactly by the number of positive integer solutions $n$ to the so-called \emph{dispersion relation}:
\[ 
	0  = \mathfrak{d}(n,\lambda) \colonequals -\lambda^2 + \frac{1}{F^2} \frac{\tanh{(nkh)}}{n}. 
\]
For each $n \geq 1$, there exists a unique positive root $\lambda_n^*$ of $\mathfrak{d}(n,\placeholder)$. We compute that
\[
	\kernel{D_{(w,R)} \mathscr{H}(0,0,\lambda_n^*)} = \linspan{\{\dot w_n\}} \oplus \{ 0\}, \qquad \dot w_n(\xi,\eta) \colonequals \cos{(n\xi)} \sinh{\left( n(\eta+kh) \right)}.  
\]
Exploiting again the Fourier multiplier expression for $D_w \mathscr{H}(0,0,\lambda_n^*)$, it is readily confirmed that the range is characterized by 
\[
	\range{D_{(w,R)} \mathscr{H}(0,0,\lambda_n^*)} = \left\{ (Y_2, Y_3) \in \Yspace_{2\per} \times \Yspace_{3\per} : \int_{\confD^0} Y_2 \dot w_n \, d\xi \, d\eta + \frac{1}{(\lambda_n^*)^2} \int_{\Gamma_{1}^0} Y_3 \dot w_n \, d\xi = 0 \right\},
\]
where $\confD^0$ and $\Gamma_{1}^0$ are a single period of the conformal domain and the preimage of the free boundary, respectively. Thus, the range is codimension $1$ and so the linearized operator is Fredholm index $0$. The above characterization of the range also allows us to infer that
\[
	D_{(w,R)} D_\lambda \mathscr{H}(0,0,\lambda_n^*) \dot w_n = \begin{pmatrix} 0 \\ -2 \lambda_n^* \partial_\eta \dot w_n|_{\Gamma_1} \end{pmatrix} \not\in \range{D_{(w,R)} \mathscr{H}(0,0,\lambda_n^*)}.
\]
Applying the (analytic) Crandall--Rabinowitz theorem, we conclude that there exists a real-analytic curve $\tilde{\km}_n^\loc \subset \mathscr{H}^{-1}(0)$ so that locally the zero-set of $\mathscr{H}$ is the union of $\tilde\km_{n}^\loc$ and the trivial solution curve $\{ (0,0,\lambda) : \lambda \in \mathbb{R} \}$.  

We focus on the case $n=1$. Undoing the change of variables, the above argument gives a real-analytic curve 
\[
	\km_{k,h}^\loc \subset (\F,\G)(0,\placeholder, \placeholder, \placeholder;\, 0)^{-1}(0)
\]
of nontrivial solutions bifurcating from the trivial solution $(0,0,(\lambda_1^*)^2/2, \lambda_1^*,0)$. Perhaps shrinking the curve slightly, the linearized operator is invertible at each point except the point of bifurcation. Then, applying Theorem~\ref{homoclinic global ift} furnishes the global curve $\km_{k,h}$. Using Schauder estimates and the fact that the domain is compact, we can prove that $(\F,\G)$ is relatively locally proper on the zero-set and it is semi-Fredholm; this is essentially done in the more general compressible setting in Lemmas~\ref{local properness lemma} and \ref{periodic Fredholmness lemma}. Thus, the loss of compactness~\ref{K loss of compactness alternative} and loss of Fredholmness~\ref{K loss of fredholmness alternative} alternatives can be eliminated. The closed loop alternative~\ref{K loop} is ruled out through the analysis of nodal properties, leaving only the possibility of blowup~\ref{K blowup alternative}.
\end{proof}

We are now prepared to prove the main result of this section, namely that there exists a family of lightly compressible periodic waves bifurcating from each nondegenerate incompressible periodic wave, and these compressible solutions are given by the Janzen--Rayleigh expansion.

\begin{theorem}[Lightly compressible periodic waves]
\label{periodic JRE theorem}
Suppose that $(0,w_0,Q_0)$ is a solution to the periodic incompressible wave problem~\eqref{conformal governing equations periodic} that is nondegenerate \eqref{definition non-degenerate periodic} with wave number $k > 0$, conformal depth $h > 0$, and Froude number $F$. There exists a curve $\cm_\loc^\per$ of solutions to the compressible periodic wave problem \eqref{steady Euler system} with the real-analytic parameterization
\[
	\cm_\loc^\per = \left\{ (u^{\mu} , w^{\mu} , Q^\mu, \lambda_0,\mu) \in \Xspace_\per \times \mathbb{R}^3 : ~ |\mu| < \mu_0 \right\} \subset \mathcal{O} \cap (\F, \G)^{-1}(0),
\] 
for some $\mu_0 > 0$. The solutions along $\cm_\loc^\per$ admit the infinite power series representation 
\be
\label{JRE}
	u^\mu = \sum_{m \geq 1} \mu^m u_m, \qquad w^\mu = w_0 + \sum_{m \geq 1} \mu^m w_m, \qquad Q^\mu = Q_0 + \sum_{m \geq 1} \mu^m Q_m
\ee
where the leading-order terms $(u_1, w_1,Q_1)$ are the unique solutions in $\Xspace_\per \times \mathbb{R}$ to the homogeneous Poisson problem
\begin{equation}
    \label{periodic JRE u}
    \left\{
    \begin{aligned}
        \Delta u_1 & = \lambda_0 \partial_\xi \left( \frac{1}{2} \frac{\lambda_0^2}{|\nabla w_0+\mathbf{e}_2|^2} + \frac{1}{F^2}w_0 \right) & \qquad& \text{in } \confD \\
        \partial_\eta u_1 &=0 &\qquad & \text{on } \partial\confD
      \end{aligned} \right.
\end{equation}
and homogeneous Laplace equation with Robin condition 
\begin{equation}
\label{periodic JRE w}
      \left\{
      \begin{aligned}
        \Delta w_1 & =0 &\qquad & \text{in }\confD\\
        -\frac{\lambda_0^2( \nabla w_0+\mathbf{e}_2) \cdot\nabla w_1 }{|\nabla w_0+\mathbf{e}_2|^4}+\frac{1}{F^2} w_1 & = -\frac{\lambda_0 \partial_\xi u_1}{|\nabla w_0 + \mathbf{e}_2|^2} + Q_1 & \qquad & \text{on } \Gamma_1 \\
        w_1 & = 0 & \qquad & \textrm{on } \Gamma_0.
    \end{aligned}
    \right.
\end{equation}
\end{theorem}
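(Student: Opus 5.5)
Apply the analytic implicit function theorem to $(\F,\G)$ at the point $(0,w_0,Q_0,\lambda_0,0)$. Here $\lambda=\lambda_0$ is frozen and $\mu$ is the parameter, while $(u,w,Q)\in\Xspace_\per\times\mathbb{R}$ are the unknowns. First check that this point lies in the open set $\mathcal{O}$. The first, second and fourth conditions in \eqref{definition periodic O delta} hold because $(0,w_0,Q_0,\lambda_0,0)$ is an admissible incompressible wave. The third condition is vacuous at $\mu=0$ and stays true for $|\mu|$ small by continuity. Since $(\F,\G)$ is real analytic on $\mathcal{O}$ and vanishes there (because $u=0$, $\mu=0$ and $(\F_2,\G)(0,w_0,Q_0,\lambda_0,0)=0$), what remains is to show that
\[
	L \colonequals D_{(u,w,Q)}(\F,\G)(0,w_0,Q_0,\lambda_0;0)\colon \Xspace_\per\times\mathbb{R}\to\Yspace_\per
\]
is an isomorphism.

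The key structural observation is that at $\mu=0$ the operator $L$ is block triangular. The first component reduces to $\F_1=\Delta u$, so $L_1(\dot u,\dot w,\dot Q)=\Delta\dot u$, which involves neither $\dot w$ nor $\dot Q$. The remaining components are
\[
	(L_2,L_3)(\dot u,\dot w,\dot Q)=D_{(w,Q)}(\F_2,\G)(0,w_0,Q_0,\lambda_0,0)(\dot w,\dot Q)+\Big(0,\ \tfrac{\lambda_0\partial_\xi\dot u}{|\nabla w_0+\frac1{kh}\mathbf{e}_2|^2}\Big|_{\Gamma_1}\Big).
\]
So to solve $L(\dot u,\dot w,\dot Q)=(Y_1,Y_2,Y_3)$, I first solve the Neumann problem $\Delta\dot u=Y_1$, $\partial_\eta\dot u=0$ on $\partial\confD$, with $\dot u$ odd, periodic and of mean zero on $\Gamma_1$.

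For this step:
\begin{itemize}
	\item Because $Y_1$ is odd in $\xi$, its integral over a period cell vanishes, so the compatibility condition holds automatically.
	\item Oddness also forces the mean of $\dot u$ on $\Gamma_1$ to vanish.
	\item Neumann Schauder theory then gives a unique $\dot u\in\Xspace_{1\per}$ with $\|\dot u\|\lesssim\|Y_1\|$. Uniqueness holds because the kernel consists of constants, and odd constants are zero.
\end{itemize}
The term $\partial_\xi\dot u|_{\Gamma_1}$ lies in $C^{1+\beta}_{\even,\per}(\Gamma_1)=\Yspace_{3\per}$, since $w_0$ is even and so the coefficient is even. I then solve for $(\dot w,\dot Q)$ using the nondegeneracy hypothesis \eqref{definition non-degenerate periodic}. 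Bounded invertibility of $L$ follows, and the implicit function theorem yields a real-analytic map $\mu\mapsto(u^\mu,w^\mu,Q^\mu)$ for $|\mu|<\mu_0$. Shrinking $\mu_0$ keeps the curve inside $\mathcal{O}$.

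The power series \eqref{JRE} is just the Taylor expansion of this analytic map. The first-order terms are obtained by differentiating $(\F,\G)(u^\mu,w^\mu,Q^\mu,\lambda_0;\mu)=0$ at $\mu=0$, which gives $L(u_1,w_1,Q_1)=-\partial_\mu(\F,\G)(0,w_0,Q_0,\lambda_0;0)$. I compute $\partial_\mu\F_1$ at $u=0$: it equals $-\lambda_0\partial_\xi\big(\tfrac12\tfrac{\lambda_0^2}{|\nabla w_0+\mathbf e_2|^2}+\tfrac1{F^2}(w_0+\eta)\big)$, and the $\eta$ term disappears under $\partial_\xi$. This yields \eqref{periodic JRE u}, where I read $\tfrac1{kh}\mathbf{e}_2$ in the normalization the statement uses. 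Since $\F_2$ and $\G$ do not depend on $\mu$, the remaining equations give $\Delta w_1=0$ together with the linearized Bernoulli condition. Differentiating $|\nabla w+\mathbf e_2|^{-2}$ produces the term $-\lambda_0^2(\nabla w_0+\mathbf e_2)\cdot\nabla w_1/|\nabla w_0+\mathbf e_2|^4$, and moving the $u_1$ term to the right-hand side gives \eqref{periodic JRE w}. Uniqueness of $(u_1,w_1,Q_1)$ is immediate from the invertibility of $L$. I expect the main obstacle to be bookkeeping rather than analysis: verifying the parity and mean-zero constraints, and the compatibility condition in the Neumann step, so that the triangular solve really lands in $\Xspace_\per$.
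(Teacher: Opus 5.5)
Your proposal is correct and follows the paper's proof essentially verbatim. The paper also observes that $D_{(u,w,Q)}(\F,\G)(0,w_0,Q_0,\lambda_0,0)$ is block lower triangular, with the odd Neumann Laplacian acting on $\dot u$, the nondegenerate block $D_{(w,Q)}(\F_2,\G)$ acting on $(\dot w,\dot Q)$, and the bounded coupling $\lambda_0|\nabla w_0+\mathbf{e}_2|^{-2}\partial_\xi|_{\Gamma_1}$; it then applies the analytic implicit function theorem and differentiates in $\mu$ to obtain \eqref{periodic JRE u}--\eqref{periodic JRE w}. The only difference is that you spell out why the odd Neumann Laplacian is invertible (oddness gives both the compatibility condition and the mean-zero normalization), which the paper simply asserts.
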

\begin{proof}
It follows from the general formula~\eqref{linsystem} for the linearized operator (evaluated at $u = 0$, $\mu = 0$) and the assumption that $(0,w_0,Q_0,\lambda_0,0)$  is nondegenerate~\eqref{definition non-degenerate periodic} that
\begin{align*}
	\mathscr{L}_0 & \colonequals D_{(u,w,Q)} (\F,\G)(0,w_0,Q_0,\lambda_0,0) \\
	& =  
	\begin{pmatrix}
		\Delta_{\odd} & 0 & 0\\
		0 & \Delta_\even  & 0 \\
		\frac{\lambda_0}{|\nabla w_0+\mathbf{e}_2|^2} \partial_\xi|_{\Gamma_1} & \left( -\frac{\lambda_0^2}{|\nabla w_0+\frac{1}{kh}\mathbf{e}_2 |^4} (\nabla w_0+\frac{1}{kh}\mathbf{e}_2) \cdot \nabla + \frac{1}{F^2} \right)|_{\Gamma_1} & -1 
	\end{pmatrix},
\end{align*}
where 
\begin{align*}
	\Delta_\odd  \colonequals \Delta\Big|_{\Xspace_{1\per}}^{\Yspace_{1\per}} \qquad \textrm{and} \qquad 
	\Delta_{\even} \colonequals \Delta\Big|_{\Xspace_{2\per}}^{\Yspace_{2\per}}
\end{align*}
 are the Laplacian with the appropriate domain and codomain restrictions. In particular, $\Delta_\odd$ is invertible, the lower right block operator is $D_{(w,Q)}(\F_2,\G)$, which is invertible by the nondegeneracy assumption~\eqref{definition non-degenerate periodic}, and the lower left entry is clearly a bounded operator. Therefore
\[
	 D_{(u,w,Q)} (\F,\G)(0,w_0,Q_0,\lambda_0,0) \colon \Xspace_\per \times \mathbb{R} \to \Yspace_\per \quad \textrm{is an isomorphism.}
\]
The existence of the curve $\cm_\loc^\per$ follows simply from the real analytic implicit function theorem, which also justifies the power series expansion. To compute the first-order terms, we differentiate the equation in $\mu$ to see that
\[
\mathscr{L}_0	\begin{pmatrix}
		u_1 \\  w_1 \\ Q_1
	\end{pmatrix}
		= 
	- 
		\begin{pmatrix} 
			D_\mu\F(0,w_0,Q_0,\lambda_0,0) \\
			D_\mu\G(0,w_0,Q_0,\lambda_0,0) 
		\end{pmatrix}
		= 
			\begin{pmatrix}
				\lambda_0 \partial_\xi \left( \frac{1}{2} \frac{\lambda_0^2}{|\nabla w_0+\frac{1}{kh}\mathbf{e}_2|^2} + \frac{1}{F^2}w_0 \right) \\
				0 \\ 0
			\end{pmatrix}.
\]
This leads directly to the PDEs in~\eqref{periodic JRE u} and \eqref{periodic JRE w}.
\end{proof}

The above result allows us to take a single given nondegenerate incompressible periodic wave and do a local Janzen--Rayleigh expansion. But we can of course perform this same procedure at each point along the distinguished arcs of the global families $\km_{k,h}$, resulting in a collection of $2$-manifolds.

\begin{corollary}[Manifolds of lightly compressible periodic waves] 
\label{periodic JRE manifold corollary}
Let $\{\mathscr{A}_j\}$ be the (at most countable) distinguished arcs of the global family of incompressible periodic water waves $\km_{k,h}$ given in~\eqref{global curve incompressible periodic}. There exists a corresponding at most countable ordered list $\{ \mathscr{M}_j\} \subset \mathcal{O}$ of two-dimensional real-analytic manifolds of periodic compressible water waves. Each $\mathscr{M}_j$ has the structure of a (real-analytic) fiber bundle over the base $\mathscr{A}_j$.
\end{corollary}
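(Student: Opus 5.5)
The plan is to apply the real-analytic implicit function theorem not at a single point, but uniformly along each distinguished arc $\mathscr{A}_j$, treating the position on the arc as an additional parameter.

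\textbf{Step 1: parameterize the arc.} Fix $j$. By the structure of Theorem~\ref{homoclinic global ift} applied to $\mathscr{H}$ in the proof of the incompressible lemma, $\mathscr{A}_j$ is a connected real-analytic one-dimensional manifold of nondegenerate points. Near any point it is the graph of an analytic map $\tau \mapsto (0,w(\tau),Q(\tau),\lambda(\tau),0)$, and along it $D_{(w,R)}\mathscr{H}$ is an isomorphism. For nondegeneracy in the sense of~\eqref{definition non-degenerate periodic} I need $D_{(w,Q)}(\F_2,\G)$ to be invertible. Since $Q = R + \lambda^2/2$ with $\lambda$ held fixed, this is the same operator, so every point of $\mathscr{A}_j$ is nondegenerate.

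\textbf{Step 2: the linearization on the arc.} At each such point, the block-triangular argument in the proof of Theorem~\ref{periodic JRE theorem} shows that $\mathscr{L}(\tau) \colonequals D_{(u,w,Q)}(\F,\G)(0,w(\tau),Q(\tau),\lambda(\tau),0)$ is an isomorphism $\Xspace_\per\times\mathbb{R}\to\Yspace_\per$.

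\textbf{Step 3: a local chart.} Consider the map
\[
(u,w,Q,\lambda,\mu) \mapsto (\F,\G)(u,w,Q,\lambda;\mu),
\]
which is analytic on $\mathcal{O}$. Its zero set near the point $p_0$ with parameter $\tau_0$ is studied with $(\lambda,\mu)$ as parameters. The analytic implicit function theorem gives analytic $(u,w,Q)(\lambda,\mu)$ for $(\lambda,\mu)$ near $(\lambda(\tau_0),0)$. This yields a local two-dimensional analytic manifold of solutions; at $\mu=0$ it reproduces $\mathscr{A}_j$ by uniqueness, since $u=0$ solves there.

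There is a subtlety when $\lambda'(\tau_0)=0$, i.e.\ a fold in $\lambda$. There $(\lambda,\mu)$ are not good coordinates. Instead I would use $(\tau,\mu)$: apply the implicit function theorem to $\Phi(u,w,Q;\tau,\mu) \colonequals (\F,\G)(u,w,Q,\lambda(\tau);\mu)$ at $(0,w(\tau_0),Q(\tau_0);\tau_0,0)$. This gives solutions $(u,w,Q)(\tau,\mu)$ with $\lambda=\lambda(\tau)$, analytic in $(\tau,\mu)$, defined for $|\tau-\tau_0|<\epsilon$ and $|\mu|<\mu_0(\tau_0)$. The fiber over the point $\tau$ is then the curve $\mu\mapsto(u,w,Q)(\tau,\mu)$ from Theorem~\ref{periodic JRE theorem} with $\lambda_0=\lambda(\tau)$ fixed. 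Shrinking neighborhoods keeps everything inside $\mathcal{O}$, since $\mathcal{O}$ is open.

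\textbf{Step 4: globalize.} Cover $\mathscr{A}_j$ by such charts. On overlaps, the uniqueness part of the implicit function theorem forces the local pieces to agree: for fixed $\tau$ there is a unique solution near $(0,w(\tau),Q(\tau))$. Take $\mathscr{M}_j$ to be the union of these pieces. To make it a genuine embedded two-manifold, I would choose a continuous positive radius function $\mu_0(\tau)$ on $\mathscr{A}_j$ (possible by local compactness and a partition of unity) and restrict to $\{|\mu|<\mu_0(\tau)\}$. The map $(\tau,\mu)\mapsto(u,w,Q,\lambda(\tau),\mu)$ is then an analytic injective immersion; injectivity holds because $\mu$ and $\tau$ are recovered from the $\mu$- and $\mu=0$-data via the arc's injective parameterization. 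The projection to $\tau$ exhibits $\mathscr{M}_j$ as a bundle over $\mathscr{A}_j$ with interval fibers $(-\mu_0(\tau),\mu_0(\tau))$, trivialized analytically via $\mu\mapsto\mu/\mu_0(\tau)$ after smoothing $\mu_0$.

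\textbf{Main obstacle.} The delicate step is this globalization. Two issues must be settled: that the chart neighborhoods are uniform enough to glue, and that $\mathscr{M}_j$ is embedded rather than merely immersed. In particular, one must exclude fibers from distant parts of the arc coming close, since $\mathscr{A}_j$ may be noncompact or accumulate near degenerate points. My first approach would be to exhaust $\mathscr{A}_j$ by compact sub-arcs. On each compact sub-arc, the uniform invertibility of $\mathscr{L}(\tau)$ gives a uniform implicit-function radius. The radii are then shrunk so that the tubes around different compact pieces are disjoint away from their overlaps, which uses the fact that $\mathscr{A}_j$ is injectively parameterized and closed in $\mathscr{N}$.
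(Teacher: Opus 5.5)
Your proposal is correct and follows the paper's argument: apply the analytic implicit function theorem at each nondegenerate point of $\mathscr{A}_j$, with the arc parameter and $\mu$ as parameters, glue the resulting charts, and use projection to $\mu=0$ as the bundle map. You are in fact more careful than the paper about the globalization, and one observation simplifies it: since $D_{(w,R)}\mathscr{H}$ is invertible on $\mathscr{A}_j$, the arc is locally a graph over $\lambda$, so no fold in $\lambda$ can occur there and $\lambda$ is strictly monotone along each distinguished arc, which makes your injectivity and embeddedness concerns automatic.
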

\begin{proof}
Consider a distinguished arc $\mathscr{A}_j$ of $\km_{k,h}$, which in view of Theorem~\ref{homoclinic global ift} admits the locally real-analytic parameterization 
\[
	\mathscr{A}_j = \{ (0, w(s), Q(s), \lambda(s), 0) : s \in (s_j, s_{j+1}) \}
\]
for some $0 < s_j < s_{j+1} \leq \infty$. By definition, each solution on $\mathscr{A}_j$ is nondegenerate in the sense of~\eqref{definition non-degenerate periodic}. Applying Theorem~\ref{periodic JRE theorem}, we conclude that there exists a two-dimensional manifold $\mathscr{M}_j$ of lightly compressible solutions which in the neighborhood of $\mathcal{N}_*$ of $(0,w(s_*), Q(s_*), \lambda(s_*), 0) \in \km_{k,h}$ has the  real-analytic coordinate chart:
\[
	\mathscr{M}_j \cap \mathcal{N}_* = \{ (u(s,\mu), w(s,\mu), Q(s,\mu), \lambda(s), \mu) : |s - s_*| + |\mu| \ll 1\} \subset (\F,\G)^{-1}(0),
\] 
where
\[
	u(s,\mu) = \sum_{m \geq 1} \mu^m u_m(s), \quad w(s,\mu) = w(s) + \sum_{m \geq 1} \mu^m w_m(s), \quad Q(s,\mu) = Q(s) + \sum_{m \geq 1} \mu^m Q_m(s),
\]
and each of the terms in the sum are obtained via the Janzen--Rayleigh expansion with base point $(0,w(s),Q(s), \lambda(s), 0)$.  This is clearly a fiber bundle over $\mathscr{A}_j$ with the trivialization map given by projection to $\mu = 0$.
\end{proof}

\subsection{Solitary waves} 

We next turn to the solitary wave case, which only requires we make an appropriate change to the functional analytic setting.

\subsubsection*{Conformal reformulation}

\begin{figure}
	\includegraphics{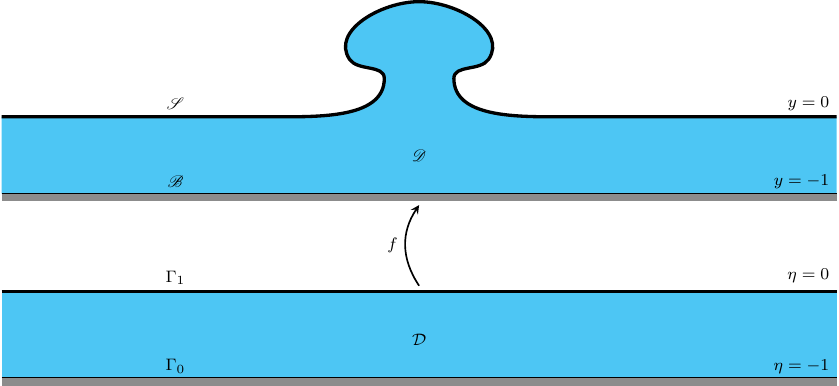}
	\caption{The physical domain $\fluidD$ in the $(x,y)$-plane and conformal domain $\confD$ in the $\zeta$-plane for solitary waves. Again, the conformal formulation of the problem allows for the free surface $\fluidS$ to be overhanging.}
	\label{solitary setup figure}
\end{figure}

Assume that we have an incompressible and irrotational solitary wave, meaning a traveling wave solution to the Euler equations~\eqref{steady Euler system} with $\mu=0$ and such that $\fluidS$ limits asymptotically as $|x| \to \infty$ to a straight line parallel to $\fluidB$. Thanks to our nondimensionalization~\eqref{nondimensionalization space and velocity}, the limiting depth is $1$.  As before, we require that $\fluidS$ is a globally injective curve with uniform $C^{2+\beta}$ regularity.  Reusing notation, we take the conformal domain to be the slab region 
\be
\label{solitary conformal domain definition}
	\confD \colonequals \{ \zeta = \xi + i \eta \in \mathbb{C} : -1 < \eta < 0 \},
\ee
with boundary components $\Gamma_0 \colonequals \{ \eta = -1\}$ and $\Gamma_1 \colonequals \{ \eta =0 \}$. There exists a (unique) holomorphic mapping $f \colon \confD \to \fluidD$ that admits an extension to $C^{2+\beta}(\overline{\confD})$, is conformal on $\overline{\confD}$, and satisfies
\[
	f(\confD) = \fluidD, \qquad f(\Gamma_0) = \fluidB, \qquad  f(\Gamma_1) = \fluidS, \qquad f(\zeta) - \zeta \to 0 \quad \textrm{as } \realpart{\zeta} \to \pm\infty.
\]
The requirement that $\fluidS$ limit to the horizontal line $\{ y = 0 \}$ means that
\be
\label{asymptotic condition w solitary wave}
	\lim_{\xi \to \pm\infty} \imagpart{f(\xi, 0)} = 0.
\ee 
See Figure~\ref{solitary setup figure} for an illustration.

We treat as the new unknowns
\[
	\phi \circ f \equalscolon u + \xi, \qquad w \colonequals \imagpart{(f-\id)}. 
\]
As in the periodic case, $w$ allows us to reconstruct $f$, whereas $u$ measures the deviations of the relative potential in the conformal variables from its far-field limit. Arguing as before, we find that the steady problem~\eqref{steady Euler system} written now in the conformal strip becomes the elliptic system
\bse
\label{conformal governing equations solitary}
\begin{equation}
\label{qpde solitary}
\left\{
\begin{aligned}
	\Delta u & =  \mu \left(  (\gamma-1) \Delta u + (\nabla u + \mathbf{e}_1)  \cdot \nabla \right) \left( \frac{1}{2} \frac{|\nabla u + \mathbf{e}_1|^2}{|\nabla w+\mathbf{e}_2|^2} + \frac{1}{F^2} (w+\eta)  \right)  \\
	\Delta w & = 0
\end{aligned}
\right.
\qquad \textrm{in } \confD.
\end{equation}
together with the boundary conditions
\begin{equation}
\label{boundary conditions solitary}
\left\{ \begin{aligned}
	\partial_\eta u & = 0 & \qquad & \textrm{on } \Gamma_0 \cup \Gamma_1 \\
	\frac{1}{2} \frac{| \nabla u + \mathbf{e}_1|^2}{|\nabla w+\mathbf{e}_2|^2} + \frac{1}{F^2}w &= \frac{1}{2} &\qquad & \textrm{on } \Gamma_1 \\
	w & =0 &\qquad& \text{on } \Gamma_0.
\end{aligned}
\right.
 \end{equation}
The first two of these correspond to the kinematic and dynamic conditions, respectively, while the third ensures $\Gamma_0$ is mapped to the physical bed at $\{ y=-1\}$. Note that we have explicitly evaluated the Bernoulli constant using the limiting values of $u$ and $w$, namely:
\be
\label{conformal far-field condition}
	\lim_{\xi \to \pm\infty} u(\xi,\eta) = 0, \qquad \lim_{\xi \to \pm\infty}  w(\xi,\eta)  = 0,
\ee  
where both limits are uniform in $\eta$. In contrast to the periodic case,  the Bernoulli constant here is determined by the far-field conditions, and hence not a varying parameter. In its place, we allow the Froude number $F$ to vary, as this agrees with the typical construction of incompressible solitary waves. 
\ese

\subsubsection*{Abstract operator equation}

The procedure for the solitary wave case is largely the same as the periodic case, except that now we work in Banach spaces of functions defined on the unbounded strip $\confD$ defined in~\eqref{solitary conformal domain definition} and with appropriately prescribed behavior at infinity. Specifically, let 
\be
\label{definition X space and Y space}
\begin{aligned}
	\Xspace_\bdd & \colonequals \Big\{ (u, w) \in C_{\bdd,\odd,0}^{2+\beta}(\overline{\confD}) \times  C_{\bdd,\even,0}^{2+\beta}(\overline{\confD}) : ~
		\partial_\eta u|_{\Gamma_0\cup\Gamma_1}=0, ~w|_{\Gamma_0}=0   \Big\}, \\
		& \equalscolon \Xspace_{1\bdd} \times \Xspace_{2\bdd} \\
	\Yspace_{\bdd} &\colonequals  C_{\bdd,\odd,0}^{\beta}(\overline{\confD})\times C_{\bdd,\even,0}^{\beta}(\overline{\confD}) \times C_{\bdd,\even,0}^{1+\beta}(\Gamma_1)  \equalscolon \Yspace_{1\bdd} \times\Yspace_{2\bdd} \times \Yspace_{3\bdd}.
\end{aligned}
\ee
Here, $\beta\in(0,1)$ is fixed but arbitrary, and the subscript ``b'' indicates that the corresponding H\"older norms are finite. The subscript $0$ indicates that the functions vanish as $|\xi| \to \infty$ and uniformly in $\eta$, which ensures that the conformal mapping limits to the identity upstream and downstream and the background velocity field is uniform~\eqref{conformal far-field condition}.  

The compressible solitary wave problem \eqref{steady Euler system} can then be written as the abstract operator equation
\begin{equation}
\label{meqn}
\left\{
\begin{aligned}
      \F(u,w,F; \mu) &= 0 \\
      \G(u,w,F; \mu) &= 0, 
\end{aligned}
\right.
\end{equation}
where $(\F,\G) : \mathcal{O}_\bdd \subset \Xspace_\bdd \times \mathbb{R}^2  \to \Yspace_\bdd$ is the real-analytic mapping with $\F$ given by
\be
\label{definition abstract F solitary}
	\F(u,w,F; \mu) \colonequals
		\begin{pmatrix}
			\Delta u - \mu \left(  (\gamma-1) \Delta u + (\nabla u  + \mathbf{e}_1) \cdot \nabla  \right)  \left( \frac{1}{2} \frac{|\nabla u + \mathbf{e}_1|^2}{|\nabla w+\mathbf{e}_2|^2} + \frac{1}{F^2} (w+\eta) \right) \\
			\Delta w
		\end{pmatrix},
\ee
and
\[
	\G(u,w,F; \mu) \colonequals \left( \frac{1}{2} \frac{|\nabla u +  \mathbf{e}_1|^2}{|\nabla w+\mathbf{e}_2|^2} + \frac{1}{F^2} w - \frac{1}{2}  \right)\Big|_{\Gamma_1}.
\]
Again, the open set $\mathcal{O}_\bdd$ is chosen so that it quantitatively implies ellipticity holds and that  $f$ is conformal and injective on the boundary. We do this by defining for each $\delta > 0$ the open set $\mathcal{O}_\delta^\bdd$ consisting of $(u,w,F,\mu) \in \Xspace_\bdd \times \mathbb{R}^2$ satisfying the conditionals in~\eqref{definition periodic O delta}, then taking 
$\mathcal{O}_\bdd \colonequals \bigcup_{\delta > 0} \mathcal{O}_\delta^\bdd$.

\begin{lemma}[Equivalence for solitary waves]
Fix $\gamma \geq 1$ and suppose that $(u,w,F,\mu) \in \mathcal{O}_\bdd$ satisfies the abstract operator equation~\eqref{meqn} with $\mu > 0$. Assume further that either
\begin{enumerate}[label=\rm(\roman*)]
	\item \label{solitary f is conformal case} $\inf_{\confD} |\nabla w+\mathbf{e}_2| > 0$ or 
	\item \label{solitary connected component case} $(u,w,F,\mu)$ is in a connected component of $\mathcal{O}_\bdd$ containing a laminar solution $(u_*,0,F_*,\mu_*)$.
\end{enumerate}
There exists $\alpha > 0$, a $C^{2+\beta}$  fluid domain $\fluidD$, and relative velocity potential $\phi \in C_\loc^{2+\beta}(\overline{\fluidD})$ solving the (nondimensionalized) steady isentropic Euler equations~\eqref{steady Euler system}. This is a solitary wave in the sense that the free boundary approaches the line $\{y = 0\}$ asymptotically and $\nabla\phi \to \mathbf{e}_1$ as $x \to \pm\infty$. 

\end{lemma}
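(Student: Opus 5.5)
The plan is to follow the proof of Lemma~\ref{periodic equivalence lemma} line by line, replacing the periodic-cell arguments with arguments on the unbounded strip that exploit the decay built into $\Xspace_\bdd$. First, let $f$ be the holomorphic function on $\confD$ with $\imagpart f = w + \eta$ and $f - \id \to 0$ as $|\xi| \to \infty$. Its real part is recovered as the harmonic conjugate of $w$, normalized to vanish at infinity; the decay of $w$ in $C^{2+\beta}_{\bdd,0}$, together with $w|_{\Gamma_0} = 0$, makes this well defined. Membership in $\mathcal{O}_\bdd$ supplies the chord-arc bound on $\Gamma_1$. Since $f(\xi - i) = \xi - i + \realpart(f-\id)$ on $\Gamma_0$, $f$ is injective on $\Gamma_0$. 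The bound $w + \eta + 1 > \delta$ on $\Gamma_1$, combined with the maximum principle on the strip (using the limit $w \to 0$ at infinity), keeps $\Gamma_1$ separated from $\Gamma_0$. Together these give injectivity of $f$ on $\partial\confD$.

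The next step, and the main obstacle, is to show that $\partial_\zeta f$ does not vanish in $\confD$. Case~\ref{solitary f is conformal case} is immediate. For case~\ref{solitary connected component case}, I would argue by continuation along a path in $\mathcal{O}_\bdd$ from the laminar solution, where $f = \id$. The periodic cell is not available here, so I would use the far-field decay. Because $\partial_\zeta f - 1 \to 0$ uniformly as $|\xi| \to \infty$, there is $R$ with $|\partial_\zeta f - 1| < 1/2$ for $|\xi| \ge R$. By compactness of the path in $\Xspace_\bdd$ and uniform decay along it, $R$ can be chosen uniformly along the path. On the boundary of the rectangle $[-R,R] \times [-1,0]$ the derivative $\partial_\zeta f$ is then nonvanishing: on the lateral sides this follows from the choice of $R$, on $\Gamma_1$ from the chord-arc condition, and on $\Gamma_0$ from the Hopf lemma applied to $w + \eta + 1$. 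The argument principle count of zeros inside the rectangle is therefore continuous in $s$, hence constant, and it equals $0$ at the laminar end. If uniformity along the path proves delicate, I would instead work locally: choose $R$ at each parameter value, which still gives local constancy of the zero count, and then invoke connectedness.

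With $f$ conformal and injective on $\partial\confD$, I would obtain global injectivity by a Darboux--Picard argument adapted to the strip. Concretely, I would apply the theorem to truncated rectangles: $f$ is close to a translation near $|\xi| = R$, so the image boundary curve is a Jordan curve for large $R$. Define $\fluidD = f(\confD)$ and $\phi = (u + \xi) \circ f^{-1}$. Regularity in $C^{2+\beta}_\loc$ follows from the $C^{2+\beta}$ bounds on $u$ and $f$ together with the lower bound on $|\partial_\zeta f|$ near the boundary. The solitary asymptotics $\imagpart f \to 0$ on $\Gamma_1$ and $\nabla\phi \to \mathbf{e}_1$ follow from the decay of $(u,w)$ and of $\nabla w$.

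Finally, I would reconstruct the thermodynamic quantities exactly as in the periodic case, with $Q = 1/2$. Set $\alpha^2 = 1/\mu - (\gamma-1)/2$ and define $\rho$ by inverting the enthalpy $H$. The third condition defining $\mathcal{O}_\bdd$ gives positivity of the argument, and hence $\alpha^2 > 0$. Mass conservation is verified by rewriting $\F_1 = 0$ in physical variables through the same algebra as before. The dynamic condition follows from $\G = 0$, and momentum conservation follows by taking the gradient of the Bernoulli law.
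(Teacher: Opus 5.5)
Your proposal is correct and is essentially the paper's argument: the paper only says the lemma follows as in Lemma~\ref{periodic equivalence lemma} with $Q = 1/2$ (so $\alpha^2 = 1/\mu - (\gamma-1)/2$), and truncating the argument-principle and Darboux--Picard steps to rectangles $[-R,R]\times[-1,0]$, with $\partial_\zeta f \to 1$ as $|\xi|\to\infty$ replacing periodicity, is exactly the adaptation left implicit there. Two harmless slips. First, $\realpart(f-\id)$ cannot in general be normalized to vanish at both ends: when $\int_{\mathbb{R}} w(\xi,0)\,d\xi$ converges, its limits at $\pm\infty$ differ by exactly that integral, but you only ever use $\nabla(f-\id)\to 0$. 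Second, injectivity on $\Gamma_0$ comes from the Hopf bound $\partial_\xi \realpart f = \partial_\eta(w+\eta) > 0$ there, not from the displayed formula alone.
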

\begin{proof}
This follows as in the periodic case detailed in Lemma~\ref{periodic equivalence lemma}, recalling that the far-field conditions imply that the Bernoulli constant $Q = 1/2$. 
\end{proof}

\subsubsection*{Local bifurcation}

Call a solution of the incompressible solitary wave problem \emph{nondegenerate} provided it satisfies 
\be
\label{definition non-degenerate solitary}
\begin{aligned}
	 (0,w_0,F,0) &\in  (\F_2,\G)^{-1}(0) \cap \mathcal{O}_\bdd, \\
	 D_{w} (\F_2,\G)(0,w_0,F,0) & \colon \Xspace_{2\bdd}  \to \Yspace_{2\bdd} \times \Yspace_{3\bdd}  \textrm{ is an isomorphism}.
\end{aligned}
\ee
For bifurcation curves constructed through analytic global continuation, nondegeneracy is generic as it holds outside a discrete set. This well-known fact is recorded in the lemma below.

\begin{lemma}[Incompressible solitary waves]
	There exists a locally real-analytic curve $\km_\sol$ of solutions to the incompressible irrotational solitary water wave problem admitting the global $C^0$ parameterization
	\be
	\label{global curve incompressible solitary}
		\km_{\sol} = \left\{ (0,w(s), F(s), 0) : s \in (0,\infty) \right\} \subset (\F,\G)^{-1}(0) \cap \mathcal{O}_\bdd,
	\ee
	with $w(s) \to 0$ and $F(s) \searrow 1$ as $s \searrow 0$.  There exists a discrete set $S \subset \mathbb{R}$ such that for all $s \in (0,\infty)\setminus S$ we have that $(0,w(s),F(s),0)$ is nondegenerate~\eqref{definition non-degenerate solitary}.
\end{lemma}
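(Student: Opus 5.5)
The plan follows the periodic case. Restrict $(\F_2,\G)$ to $u=0$, $\mu=0$ and regard
\[
\mathscr{H}(w,F) \colonequals \left(\F_2(0,w,F;0),\, \G(0,w,F;0)\right)
\]
as a real-analytic map from an open subset of $\Xspace_{2\bdd}\times\mathbb{R}$ into $\Yspace_{2\bdd}\times\Yspace_{3\bdd}$, with $F$ as the parameter. The trivial solutions are $w=0$ for every $F$. Linearizing at $w=0$ gives
\[
\dot w \mapsto \left(\Delta \dot w,\ \left(-\partial_\eta \dot w + F^{-2}\dot w\right)|_{\Gamma_1}\right), \qquad \dot w|_{\Gamma_0}=0.
\]
Taking the Fourier transform in $\xi$ gives the symbol $-|\kappa|\coth|\kappa| + F^{-2}$, which is bounded away from zero for all $\kappa\in\mathbb{R}$ exactly when $F>1$ (supercritical). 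So for $F>1$ the linearized operator is invertible, while at $F=1$ the essential spectrum touches zero at $\kappa=0$. Bifurcation therefore happens from the essential spectrum, and Crandall--Rabinowitz is not available.

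For small amplitude I would take an existing construction rather than redo it. Options are the classical KdV-scaling/Friedrichs--Hyers argument, Beale's Nash--Moser-free approach, or a center manifold reduction in the style of Kirchgässner. Any of these produces a family of small, even, monotone solitary waves with $w\to0$ and $F\searrow1$. I would then check that these solutions lie in $\Xspace_{2\bdd}$ (decaying, even, $C^{2+\beta}$) and that they are nondegenerate in the sense of~\eqref{definition non-degenerate solitary}. Nondegeneracy comes from the KdV limit: the linearized operator at the KdV soliton has kernel spanned only by the odd translation mode $\operatorname{sech}^2{}'$, which the even-symmetry restriction removes, and a perturbation argument then gives invertibility for small amplitude. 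Fredholm index $0$ holds because $F>1$ makes the limiting operator at $|\xi|=\infty$ invertible, and standard arguments (as in Chen--Walsh--Wheeler) give Fredholmness for operators that are asymptotically invertible in this way.

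Next, apply Theorem~\ref{homoclinic global ift} at one such nondegenerate small solution and glue the result to the local curve. This produces the global $C^0$ curve $\km_\sol$, with local real-analytic reparametrizations and parameter $s\in(0,\infty)$, where $s\searrow0$ corresponds to the small-amplitude end. The real content needed for the statement is genericity. By the structural remarks after Theorem~\ref{homoclinic global ift}, $\km_\sol$ is the union of the closures of countably many distinguished arcs, and the degenerate points form a discrete set. Along $\km_\sol$ the full $w$-linearization is Fredholm of index $0$ by part~\ref{K well behaved}, so it is an isomorphism exactly when it is injective. In the analytic construction, each point of $\km_\sol$ outside the discrete set $S$ lies in the interior of some distinguished arc, hence is nondegenerate. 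This gives the discrete $S$.

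The main obstacle is the small-amplitude step at $F=1$, namely establishing nondegeneracy of the bifurcating waves in the decaying even space. I would handle it by citing the known small-amplitude theory, for example Chen--Walsh--Wheeler or Haziot--Wheeler, which treat the conformal formulation, instead of re-proving it. For the global curve I only need existence and the generic-nondegeneracy structure. The curve's alternatives, such as the exclusion of loops, are not required by the statement.
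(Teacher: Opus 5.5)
Your proposal follows the same outline as the paper's sketch: obtain small-amplitude solitary waves emerging from $F=1$ by an existing construction (the paper uses a center manifold reduction), show that the small nontrivial waves are nondegenerate via the reduced KdV-type problem, apply Theorem~\ref{homoclinic global ift}, and read off the discreteness of the degenerate set from the distinguished-arc structure. The only difference is in the nondegeneracy step. The paper makes your ``KdV limit plus perturbation'' argument precise through the correspondence between the full linearization and the linearization of the reduced system on the center manifold \cite[Theorem 4.1]{wheeler2013solitary}, which is the natural way to handle the singular long-wave limit.
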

\begin{proof}
Versions of this result have been obtained through a variety of means, and in much more general physical settings, by many authors. For a survey of this literature, see \cite[Section 5]{haziot2022traveling}. We mention that our formulation is inspired by the recent work of Haziot and Wheeler~\cite{haziot2023large} on (incompressible) solitary waves with constant vorticity. Again, we only briefly outline the argument in the interests of keeping the paper somewhat self-contained. 

Consider the trivial solution $(0,0,F,0)$. The linearized operator there is generally \emph{not} Fredholm in this case due to the unboundedness of the domain. In place of local bifurcation theory, one can construct small-amplitude solutions via a center manifold reduction; see, for example~\cite{dias2003handbook}. The result of this analysis is a curve $\km_\loc^\sol$ that contains in its closure the trivial solution. Furthermore, inspecting the behavior of the linearized operator along $\km_\loc^\sol$, it is possible to verify that sufficiently small-amplitude solutions that are nontrivial are in fact nondegenerate~\eqref{definition non-degenerate solitary}. The basic idea is that the linearized system is in one-to-one correspondence with the linearization of the reduced system on the center manifold; see, for example, \cite[Theorem 4.1]{wheeler2013solitary}. We may then apply the global implicit function theorem as in Theorem~\ref{homoclinic global ift} to obtain the global curve $\km_\sol$. 
\end{proof}

\begin{theorem}[Lightly compressible solitary waves]
\label{solitary JRE theorem}
Suppose that $(0,w_0)$ is a solution to the incompressible solitary wave~\eqref{conformal governing equations solitary} for a Froude number $F > 1$ and that is nondegenerate~\eqref{definition non-degenerate solitary}. There exists a curve $\cm_\loc^\sol$ of solutions to the compressible solitary wave problem \eqref{steady Euler system} with the real-analytic parameterization
\[
	\cm_\loc^\sol = \left\{ (u^\mu, w^\mu, \mu) \in \Xspace_\bdd \times \mathbb{R} : |\mu| < \mu_0 \right\} \subset \mathcal{O}_\bdd \cap (\F, \G)^{-1}(0),
\] 
for some $\mu_0 > 0$. The solutions along $\cm_\loc^\sol$ admit the infinite power series representation~\eqref{JRE} 
\be
\label{JRE solitary}
	u^\mu = \sum_{m \geq 1} \mu^m u_m, \qquad w^\mu = w_0 + \sum_{m \geq 1} \mu^m w_m,
\ee
where $F$ is fixed along the curve and the leading-order terms $(u_1, w_1)$ are the unique solutions in $\Xspace_\bdd$ of~\eqref{periodic JRE u} and \eqref{periodic JRE w} (with $\lambda_0 = 1$ and $Q_1 = 0$).  
\end{theorem}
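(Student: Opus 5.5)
The argument will mirror the proof of Theorem~\ref{periodic JRE theorem}, now carried out in the spaces $\Xspace_\bdd$, $\Yspace_\bdd$. With $F$ held fixed, I will regard $(\F,\G)$ as a real-analytic map of $(u,w)\in\Xspace_\bdd$ and $\mu\in\mathbb{R}$, defined on a neighborhood of $(0,w_0,F,0)$ in $\mathcal{O}_\bdd$. The first step is analyticity. The nonlinearities are rational functions of $\nabla u$, $\nabla w$, $w$ and $\eta$. The only denominator is $|\nabla w+\mathbf{e}_2|^2$, which is bounded below on $\Gamma_1$ by the chord-arc condition. Throughout $\overline{\confD}$ it is bounded below using the conformality of the incompressible mapping $f_0$ together with smallness of the perturbation in $C^{2+\beta}_\bdd$, so it stays bounded away from zero nearby. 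Since Hölder spaces on the strip are Banach algebras, the map $(\F,\G)$ is analytic. I also need to check that it sends $\Xspace_\bdd\times\mathbb{R}$ into $\Yspace_\bdd$, i.e.\ that it preserves parity and decay at infinity. Parity is preserved because the term $(\nabla u+\mathbf{e}_1)\cdot\nabla(\cdots)$ is odd in $\xi$ when $u$ is odd and $w$ is even. For decay, at $u=w=0$ the bracket reduces to $\tfrac12+\eta/F^2$, whose $\xi$-derivative vanishes and whose Laplacian vanishes; hence every term of $\F_1$ decays whenever $u,w\in C_0$. Likewise $\G\to \tfrac12-\tfrac12=0$ at infinity. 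Finally, $\F(0,w_0,F;0)=0$ and $\G(0,w_0,F;0)=0$ because $w_0$ is an incompressible solitary wave.

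The second step is to compute $\mathscr{L}_0 \colonequals D_{(u,w)}(\F,\G)(0,w_0,F;0)$. At $\mu=0$ it has the same block lower-triangular structure as in the periodic case: its rows are $\Delta_\odd\dot u$, then $\Delta_\even\dot w$, then the boundary operator. The boundary row reads
\[
\frac{\partial_\xi\dot u}{|\nabla w_0+\mathbf{e}_2|^2} - \frac{(\nabla w_0+\mathbf{e}_2)\cdot\nabla\dot w}{|\nabla w_0+\mathbf{e}_2|^4}+\frac{1}{F^2}\dot w \qquad \textrm{on } \Gamma_1.
\]
The block $D_w(\F_2,\G)$ is an isomorphism by the nondegeneracy hypothesis~\eqref{definition non-degenerate solitary}. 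The coupling term $\dot u\mapsto \partial_\xi\dot u|_{\Gamma_1}/|\nabla w_0+\mathbf{e}_2|^2$ is bounded from $\Xspace_{1\bdd}$ into $\Yspace_{3\bdd}$; the target has the right parity, and the decay follows from $\dot u\in C_0^{2+\beta}$. It therefore remains to show that $\Delta_\odd\colon\Xspace_{1\bdd}\to\Yspace_{1\bdd}$ is an isomorphism, after which $\mathscr{L}_0$ is invertible by back-substitution.

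This Neumann problem on the unbounded strip is the main obstacle, since it is not a Fredholm statement on a compact domain. Injectivity is straightforward. For bounded odd $\dot u$ that is harmonic with Neumann data and decays, reflect evenly across $\Gamma_0$ and $\Gamma_1$ to obtain a periodic-in-$\eta$ bounded harmonic function on $\mathbb{R}^2$. Liouville's theorem makes it constant, and decay makes it zero. For surjectivity, given odd $g\in C^\beta_{\bdd,0}$, I will build the solution explicitly. Expand in the Neumann cosine modes $\cos(n\pi(\eta+1))$. For $n\ge1$ the operators $\partial_\xi^2-(n\pi)^2$ are invertible with exponentially decaying kernels. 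The $n=0$ mode requires solving $\partial_\xi^2 v=\bar g(\xi)$, where $\bar g$ is the vertical average. Because $\bar g$ is odd, I can take $v(\xi)=\int_{\xi}^{\infty}\int_{t}^{\infty}\bar g$, but convergence is not automatic for merely $C_0$ data. So I expect to need either a weighted-decay refinement or an argument using oddness together with a compactness/approximation step: solve on truncated strips, get uniform Schauder bounds, and pass to the limit. If this step fails in general, I would instead note that the particular right-hand side needed for the expansion (below) is a total $\xi$-derivative of a decaying even function, for which the zero mode integrates once explicitly.

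Granting invertibility of $\mathscr{L}_0$, the analytic implicit function theorem yields $\mu_0>0$ and an analytic curve $\mu\mapsto(u^\mu,w^\mu)$ with $F$ fixed. Shrinking $\mu_0$ keeps the curve inside $\mathcal{O}_\bdd$, since the ellipticity condition holds automatically for small $|\mu|$ and the other conditions are open. Analyticity gives the power series~\eqref{JRE solitary}. Differentiating the equation in $\mu$ at $\mu=0$ gives $\mathscr{L}_0(u_1,w_1)=-D_\mu(\F,\G)$, whose only nonzero entry is $\partial_\xi\bigl(\tfrac12|\nabla w_0+\mathbf{e}_2|^{-2}+F^{-2}w_0\bigr)$. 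This reproduces~\eqref{periodic JRE u} and~\eqref{periodic JRE w} with $\lambda_0=1$ and $Q_1=0$, the latter because the Bernoulli constant is fixed at $1/2$.
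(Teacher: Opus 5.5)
You follow the same route as the paper, whose own proof is only the assertion that the periodic argument adapts. You have correctly isolated the one step that does not adapt: invertibility of $\Delta_\odd\colon\Xspace_{1\bdd}\to\Yspace_{1\bdd}$. In the periodic case, oddness and periodicity remove the zero $\xi$-frequency; on the strip nothing does. Your proof only grants this step, and the step is false, so none of your proposed repairs can work.

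Suppose $u\in\Xspace_{1\bdd}$ solves $\Delta u=g$. The vertical mean $\bar u(\xi)\colonequals\int_{-1}^0 u(\xi,\eta)\,d\eta$ satisfies $\bar u''=\bar g$, because the Neumann conditions kill $\int_{-1}^0\partial_\eta^2u\,d\eta$. It is odd, and $\bar u,\bar u'\to 0$ at $\pm\infty$. Hence $\bar u'(\xi)=-\int_\xi^\infty \bar g$, and
\[
0=\bar u(0)=\int_0^\infty t\,\bar g(t)\,dt .
\]
So even compactly supported odd data must satisfy a nontrivial moment condition. Worse, for $\eta$-independent odd data equal to $\signum{\xi}/(1+|\xi|)$ when $|\xi|\geq 1$, no solution with bounded gradient exists at all, since $\bar u'$ would grow like $\log|\xi|$. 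This is not a convergence or compactness defect that truncation and limits, or weights placed on the data alone, could cure: the solutions do not exist. Since the first row of $\mathscr{L}_0$ at $\mu=0$ is just $\Delta\dot u$, $\mathscr{L}_0$ is not onto $\Yspace_\bdd$.

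Your fallback of using only the specific forcing also fails, which shows the obstruction is not an artifact of the data space. Set $E\colonequals\mathfrak{B}(0,w_0)-\tfrac12-\eta/F^2$, which is even and decaying. Then $\Delta u_1=\partial_\xi E$ forces $\bar u_1'=\bar E$, so $\bar u_1(\xi)\to\pm\tfrac12\int_{\confD}E\,d\xi\,d\eta$ as $\xi\to\pm\infty$. Write $q\colonequals 1/\partial_\zeta f_0$, and use Cauchy's theorem to move $\int_{\mathbb{R}}(q-1)\,d\xi$ down to $\Gamma_0$, where $q>0$ is real. One finds
\[
\int_{\confD}E\,d\xi\,d\eta=-\Big(1-\frac{1}{2F^2}\Big)a+\frac12\int_{\confD}|q-1|^2\,d\xi\,d\eta+\int_{\Gamma_0}\frac{(q-1)^2}{q}\,d\xi,\qquad a\colonequals\int_{\mathbb{R}}w_0(\xi,0)\,d\xi .
\]
For small-amplitude waves the linear term dominates: in the KdV scaling $a\sim\epsilon^{1/2}$, while the quadratic terms are $O(\epsilon^{3/2})$, and $1-\frac{1}{2F^2}>0$ because $F>1$. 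So $u_1$ does not decay. Differentiating any $C^1$ solution curve in $\Xspace_\bdd$ at $\mu=0$ would produce exactly such a $u_1\in\Xspace_{1\bdd}$. Hence no curve of the claimed kind exists in these spaces: compressibility generates a far-field phase shift $u\to\pm c$ that $\Xspace_{1\bdd}$ excludes.

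A workable setup is as follows. Write $u=c\chi(\xi)+\tilde u$, with $\chi$ a fixed odd profile satisfying $\chi(\pm\infty)=\pm1$, and treat $c\in\mathbb{R}$ as an extra unknown. Use spaces in which $\bar g$ has a finite first moment, for instance exponentially weighted ones, which are natural since $F>1$. Then $(c,\tilde u)\mapsto c\chi''+\Delta\tilde u$ is an isomorphism, because $\int_0^\infty t\,\chi''(t)\,dt=-1$ supplies the missing direction. The nondegeneracy of the $w$-block must also be re-verified in these spaces before your back-substitution and the analytic implicit function theorem apply.
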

\begin{proof}
This follows from a straightforward adaptation of the proof of Theorem~\ref{periodic JRE theorem}. \end{proof}

\section{Subsonic compressible periodic water waves}
\label{global bifurcation section}

Our objective in this section is to make an analytic global continuation of the curves of lightly compressible periodic waves constructed in Section~\ref{lightly compressible periodic waves section}. These families extend beyond the perturbative Mach number regime and up until either a singularity develops or the curve returns to the point of bifurcation. Our main result is as follows.

\begin{theorem}[Subsonic periodic waves]
\label{periodic global bifurcation theorem}
Fix a wave number $k > 0$, conformal depth $h > 0$, Froude number $F > 0$, and a corresponding nondegenerate solution $(0,w_0,Q_0)$ to the periodic incompressible water wave problem.  Let $\cm_\loc^\per$ be the real-analytic curve of lightly compressible periodic waves furnished by Theorem~\ref{periodic JRE theorem}. There exists a curve $\cm_\per$ of compressible periodic water waves admitting the $C^0$ parameterization
\[
	\cm_\per = \left\{ (u(s), w(s), Q(s),\lambda_0, \mu(s) ) : s \in [0,\infty) \right\} \subset (\F,\G)^{-1} (0) 
\]
and satisfying the following.
\begin{enumerate}[label=\rm(\alph*)]
	\item \label{periodic extension part} $\cm_\loc^\per \subset \cm_\per$ and $(u(0), w(0), Q(0), \lambda_0, \mu(0)) = (0,w_0,Q_0,\lambda_0, 0)$.
	\item \label{periodic physical part} Every solution on $\cm_\per$ is physical in that the corresponding mapping $f(s)$ is conformal and injective on $\overline{\confD}$.
	\item \label{periodic alternatives part} In the limit $s \to \infty$, one of the following alternatives must occur:
		\begin{enumerate}[label=\rm(A\arabic*)]
			\item \label{periodic unboundedness alternative} \textup{(Unboundedness)} The quantity
			\[
				 \|u(s) \|_{C_\per^{2+\beta}(\confD)} + \| w(s) \|_{C_\per^{2+\beta}(\confD)} + |\mu(s)| \to \infty. 
			\]
			\item \label{boundary degeneracy alternative} \textup{(Boundary degeneracy)} Either
			\[
				\dist{(\fluidS(s), \fluidB)} \to 0 \qquad \textrm{or} \qquad \lim_{s\to\infty} \sup_{\substack{\zeta, \zeta^\prime \in \Gamma_1 \\ \zeta \neq \zeta^\prime}} \frac{|\zeta-\zeta^\prime|}{|f(\zeta) - f(\zeta^\prime)|} = \infty,
			\]
			where $f(s)$ is the conformal mapping derived from $w(s)$ and $\fluidS(s)$ is the corresponding free boundary in the physical domain.

			\item \label{transsonic limit alternative} \textup{(Transsonic flow)} At a point in $\confD$, the flow approaches transsonic:
			\be
			\label{transsonic limit}
		\lim_{s \to \infty} \inf_{\confD} \left(c(s)^2 - \frac{|\nabla u(s) + \lambda_0 \mathbf{e}_1|^2}{|\nabla w(s) + \frac{1}{kh}\mathbf{e}_2|^2}  \right) = 0.
			\ee
			\item \label{stagnation alternative} \textup{(Stagnation)} A relative stagnation point forms on the free surface:
			\[
				\lim_{s \to \infty} \inf_{\Gamma_1} |\partial_\xi u(s) + \lambda_0| = 0.
			\]

			\item \label{periodic loop alternative} \textup{(Closed loop)} There exists $T > 0$ such that 
			\[
				\left(u(s+T),\, w(s+T),\, Q(s+T),\, \mu(s+T) \right)= \left(u(s),\, w(s), \, Q(s),\, \mu(s) \right)
			\]
			for all $ s \in [0,\infty)$. If this occurs, then necessarily $\cm_\per$ contains two distinct incompressible solutions.
		\end{enumerate}
	\item At each parameter value $s$, $\cm_\per$ admits a local reparameterization that is real analytic. 
\end{enumerate}
\end{theorem}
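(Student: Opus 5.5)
We will apply the global implicit function theorem, Theorem~\ref{homoclinic global ift}, to $\mathscr{H} \colonequals (\F,\G)$ with $\lambda = \lambda_0$ fixed. The unknown is $w \mapsto (u,w,Q) \in \Xspace_\per \times \mathbb{R}$, the parameter is $\mu$, and the open set is the slice of $\mathcal{O}$ with $\lambda = \lambda_0$. The starting point is $(0,w_0,Q_0,\lambda_0,0)$. The proof of Theorem~\ref{periodic JRE theorem} already shows that $\mathscr{L}_0$ is an isomorphism there, so \eqref{homoclinic global ift assumptions} holds. The theorem then produces a curve $\km$ parametrized by $s \in \mathbb{R}$. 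By the maximality part~\ref{K maximal part}, it contains $\cm_\loc^\per$. We restrict to $s \in [0,\infty)$ and define $\cm_\per$ to be this half-curve. This gives part~\ref{periodic extension part}, and the local real-analytic reparametrization comes from part~\ref{K reparam}. Part~\ref{periodic physical part} follows from Lemma~\ref{periodic equivalence lemma}\ref{connected component case}. Indeed, $\cm_\per$ is connected and lies in $\mathcal{O}$, and it contains $(0,w_0,Q_0,\lambda_0,0)$. That point is joined within $\mathcal{O}$ to a laminar flow along the incompressible curve $\km_{k,h}$. The argument-principle proof of that lemma also goes through for $\mu \le 0$, so $f(s)$ is conformal and injective.

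The main step is to eliminate alternatives \ref{K loss of compactness alternative} and \ref{K loss of fredholmness alternative}, and this rests on ellipticity. First, we check that for $(u,w,Q,\lambda_0,\mu) \in \mathcal{O}_\delta$ the linearized operator $D_{(u,w,Q)}(\F,\G)$ is elliptic in the Agmon--Douglis--Nirenberg sense, with constants depending only on $\delta$. In the interior, the principal part of the $u$-equation is
\[
	\Delta u - \mu\big((\gamma-1)\Delta u + \ldots\big)
\]
coupled with the second-order term coming from $\nabla u \cdot \nabla(|\nabla u+\lambda\mathbf e_1|^2/|\nabla w+\cdots|^2)$. Its symbol is $(1 - \mu(\gamma-1)q - \mu q)|\xi|^2$ plus a term $-\mu(V\cdot\xi)^2$, where $q$ and $V$ are the relevant velocity quantities. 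Positivity of this symbol is exactly the third condition in \eqref{definition periodic O delta}, that is, subsonicity. The $w$-equation is the Laplacian, and the system is triangular at top order, so the interior symbol is fine. Next, we verify the Lopatinskii condition on $\Gamma_1$ (Proposition~\ref{lopat prop}) using the Bernoulli boundary operator. Its principal part in $\dot w$ is the normal derivative with coefficient $\lambda_0^2$-type factors, and it is nondegenerate exactly when $\inf_{\Gamma_1}(\partial_\xi u+\lambda_0)^2/|\nabla w + \tfrac1{kh}\mathbf e_2|^3 > \delta$. Given these, Schauder estimates (Lemma~\ref{schauder lemma}) give a uniform $C^{2+\beta}$ bound with a compact lower-order remainder. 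Because the conformal domain is compact modulo periodicity, this yields two conclusions. The linearization is semi-Fredholm on all of $\mathcal{O}$ (Lemma~\ref{periodic Fredholmness lemma}), and continuity of the index from $\mathscr{L}_0$ makes it Fredholm of index $0$. Also, closed and bounded subsets of $(\F,\G)^{-1}(0)\cap\mathcal{O}_\delta$ are compact (Lemma~\ref{local properness lemma}): from $\sup N(s_n)<\infty$, the bound on $\mathcal{O}_\delta$, and the quasilinear structure, we extract a convergent subsequence by Arzel\`a--Ascoli in $C^{2+\beta'}$ and then upgrade using the Schauder estimate. I expect this to be the main obstacle, because the coupling is quasilinear and the top-order coefficients depend on $\nabla u, \nabla w$. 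The estimate must therefore be applied to the equation viewed as linear in its highest derivatives with H\"older coefficients, and $C^{2+\beta}$ convergence has to be obtained from the difference equation rather than by bootstrapping.

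Only the blowup alternative \ref{K blowup alternative} and the loop alternative \ref{K loop} now remain. If $N(s)\to\infty$, we unpack $1/\dist(\cdot,\partial\mathcal{O})\to\infty$, which means $\delta\to 0$ along the curve. Each of the four defining conditions in \eqref{definition periodic O delta} then corresponds to one alternative, and $\|u\|+\|w\|+|\mu|\to\infty$ gives \ref{periodic unboundedness alternative}. The first term relates to $\dist(\fluidS,\fluidB)\to0$. This follows because $w+\eta+kh$ is the height above the bed, and the chord-arc term is the second part of \ref{boundary degeneracy alternative}. The third condition, rewritten via $c^2=\alpha^2\rho^{\gamma-1}$ and the Bernoulli law \eqref{conformal Bernoulli law}, is equivalent to $c^2 - |\nabla\phi|^2 > \mu^{-1}\cdot O(\delta)$, which gives \ref{transsonic limit alternative}. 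The fourth condition gives \ref{stagnation alternative}, since $|\nabla w+\tfrac1{kh}\mathbf e_2|$ is bounded when \ref{periodic unboundedness alternative} fails. The conditions are only required along a sequence, and a subsequence argument handles this. Finally, suppose a closed loop occurs. The curve then returns to $(0,w_0,Q_0,\lambda_0,0)$. At that point the local solution set is exactly $\cm_\loc^\per$, which is parametrized by $\mu$ and is therefore traversed in both directions. A standard symmetry argument in $\mu$ then shows that $\mu(s)$ must vanish somewhere else on the loop, giving a second incompressible solution.
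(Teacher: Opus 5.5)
Your proposal follows the paper's proof essentially step for step. Both apply the global implicit function theorem at $(0,w_0,Q_0,\lambda_0,0)$ with $\lambda=\lambda_0$ frozen and $\mu$ as the parameter, and both use Lemma~\ref{periodic equivalence lemma} for part~(b). Both then discard loss of compactness and of Fredholmness via Lemmas~\ref{local properness lemma} and \ref{periodic Fredholmness lemma}, and read off \ref{boundary degeneracy alternative}--\ref{stagnation alternative} from the inequalities defining $\mathcal{O}_\delta$.

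Your difference-equation proof of local properness is a legitimate variant. The paper instead uses translation invariance to put $(\partial_\xi u_j,\partial_\xi w_j)$ in the kernel of the linearization, and then handles $\partial_\eta^2 w_j$ and $\partial_\eta^2 u_j$ separately. Your route avoids needing $\partial_\xi u_j\in C^{2+\beta}$ a priori. The cost is checking that the segment-averaged operator is uniformly elliptic and complementing for large indices, which holds because those conditions are open in $C^1$.

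There is one omitted step and one misdescribed step.

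\textbf{Omitted: the $Q$ term.} Since $Q$ is part of the unknown, $N(s)$ contains $|Q(s)|$, but \ref{periodic unboundedness alternative} does not. You must therefore rule out $|Q(s)|\to\infty$ while $(u,w,\mu)$ stays bounded and the curve stays away from $\partial\mathcal{O}$. The paper does this by averaging \eqref{cbernoulli periodic} over a period and using the zero mean of $w$ on $\Gamma_1$ from \eqref{conformal mean depth equation}. This expresses $Q(s)$ as the mean over $\Gamma_1$ of $\frac12|\nabla u+\lambda_0\mathbf e_1|^2/|\nabla w+\mathbf e_2|^2$. That quantity is bounded by $\|(u,w)\|_{C^1}$ together with the chord-arc lower bound on $|\nabla w+\mathbf e_2|$.

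\textbf{Misdescribed: the closed loop.} There is no $\mu\mapsto-\mu$ symmetry to invoke. What you need is that $\mu$ is strictly monotone along $\cm_\loc^\per$ through the base point. By periodicity, $\mu(s)$ then has one sign for small $s>0$ and the opposite sign for $s$ slightly below $T$. The intermediate value theorem gives $s_*\in(0,T)$ with $\mu(s_*)=0$.

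\textbf{Two smaller corrections.}
First, for $\mu>0$ the third condition in \eqref{definition periodic O delta} is exactly $\inf(c^2-|\nabla\phi|^2)>\delta$, not $\mu^{-1}O(\delta)$.
Second, in part~(b), linking the initial wave to a laminar flow through $\km_{k,h}$ presupposes that the wave lies on that curve, which the theorem does not assume. It is simpler to start the argument-principle count at the initial wave itself, whose conformal map already has nonvanishing derivative.
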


Over the next subsections, we will show that the more expansive set of alternatives in the general setting of Theorem~\ref{homoclinic global ift} can be reduced down to those above. To slightly simplify the formulas, throughout the rest of the section we simply take $kh = 1$. The general case can be obtained through a trivial modification of the argument.

\subsection{Linear a priori estimates}
\label{ellipticity section}

We first establish linear Schauder-type estimates for the linearization of the abstract operator $(\F,\G)$ from~\eqref{definition abstract F} at a solution $(u,w,Q,\lambda;\mu)\in\mathcal{O}$. The main objective is to verify that the corresponding system is elliptic in the Agmon--Douglis--Nirenberg (ADN) sense and satisfies the Lopatinskii (complementing) condition, and to derive the resulting Schauder estimates that rule out the loss of compactness and Fredholmness alternatives in the global continuation. While we carry this out for the water wave problem at hand, the argument uses only the structure of the linearized system, and therefore extends to a wider class of quasilinear elliptic systems sharing the same features; see Remark~\ref{generality remark}.

To lighten the notation, throughout this section we write
\[
    \vfield \colonequals \nabla u + \lambda \mathbf{e}_1,
    \qquad
    \qfield \colonequals \nabla w + \mathbf{e}_2,
\]
and introduce the \emph{Bernoulli function}
\be
\label{definition Bernoulli function}
    \mathfrak{B} = \mathfrak{B}(u,w)
    \colonequals \frac{1}{2} \frac{|\vfield|^2}{|\qfield|^2}
    + \frac{w+\eta}{F^2},
\ee
in terms of which
\[
    \F(u,w,Q,\lambda;\mu)
    = \big( 1 - \mu(\gamma-1) \mathfrak{B} \big) \Delta u
    - \mu\, \vfield \cdot \nabla \mathfrak{B},
    \qquad
    \G(u,w,Q,\lambda;\mu)
    = \big( \mathfrak{B} - Q \big)\big|_{\Gamma_1}.
\]
Observe also that Bernoulli's law~\eqref{conformal Bernoulli law}
states precisely that $\mathfrak{B} = B - H(\varrho)$ in
$\overline{\confD}$ whenever $(u,w,Q,\lambda;\mu)$ is in the
zero-set of $(\F,\G)$.

Differentiating $\mathfrak{B}$ at $(u,w)$ in the direction
$(\dot u, \dot w)$ yields the \emph{linearized Bernoulli form}
\be
\label{linearized Bernoulli form}
    \dot{\mathfrak{B}} \colonequals
    D_{(u,w)} \mathfrak{B}(u,w)
    \begin{pmatrix} \dot u \\ \dot w \end{pmatrix}
    = \frac{\vfield \cdot \nabla \dot u}{|\qfield|^2}
    - |\vfield|^2\, \frac{\qfield \cdot \nabla \dot w}{|\qfield|^4}
    + \frac{\dot w}{F^2},
\ee
and hence the linearized operator at $(u,w,Q,\lambda;\mu)$ is
given by
\bse
\label{linsystem}
\begin{align}
    D_{(u,w,Q)} \F
    \begin{pmatrix} \dot u \\ \dot w \\ \dot Q\end{pmatrix}
    & =
    \begin{pmatrix}
        \big(1 - \mu(\gamma-1)\mathfrak{B}\big) \Delta \dot u
        - \mu(\gamma-1)\, \Delta u \; \dot{\mathfrak{B}}
        - \mu\, \nabla \mathfrak{B} \cdot \nabla \dot u
        - \mu\, \vfield \cdot \nabla  \dot{\mathfrak{B}}  \\
        \Delta \dot w
    \end{pmatrix}, \label{Lqpde}\\
    D_{(u,w,Q)} \G
    \begin{pmatrix} \dot u \\ \dot w \\ \dot Q\end{pmatrix}
    & = \big( \dot{\mathfrak{B}}  - \dot Q\big)\Big|_{\Gamma_1}.
    \label{Lcbernoulli}
\end{align}
\ese

 Note that in~\eqref{Lqpde}, the final two terms involve a second-order coupling between $\dot u$ and $\dot w$; this is one of the main technical obstacles for the compressible case.

Reading off the second-order part of~\eqref{Lqpde}, the principal
symbol of the interior operator is the upper triangular matrix
\[
    A^0(\boldsymbol{\xi},\boldsymbol{\Xi})
    = \begin{pmatrix} A_{11}^0 & A_{12}^0 \\ 0 & A_{22}^0
    \end{pmatrix},
    \qquad \textrm{where} \quad  \boldsymbol{\Xi} = (\Xi_1, \Xi_2) \in \mathbb{R}^2,
\]
with entries
\be
\label{linsys:coeffs}
    A_{11}^0  =  \big(1-\mu(\gamma-1)\mathfrak{B}\big)
    |\boldsymbol{\Xi}|^2
    - \mu\, \frac{(\vfield \cdot \boldsymbol{\Xi})^2}{|\qfield|^2},
    \quad
    A_{12}^0 = \mu\, |\vfield|^2\,
    \frac{(\vfield\cdot\boldsymbol{\Xi})
    (\qfield\cdot\boldsymbol{\Xi})}{|\qfield|^4},
    \quad
    A_{22}^0  = |\boldsymbol{\Xi}|^2.
\ee
It is convenient to introduce the \emph{ellipticity function}
$\ellfunc = \ellfunc(u,w,\lambda; \mu)$ defined by
\be
\label{definition ellipticity function}
    \ellfunc \colonequals
    1 - \mu(\gamma-1)\mathfrak{B}
    - \mu \frac{|\vfield|^2}{|\qfield|^2}
    = 1 - \mu \left( \frac{\gamma+1}{2}
    \frac{|\vfield|^2}{|\qfield|^2}
    + \frac{\gamma-1}{F^2}(w+\eta) \right).
\ee
Since $A^0$ is triangular, $\det A^0 = |\boldsymbol{\Xi}|^2 A_{11}^0$, and, as $\mu \geq 0$, the Cauchy--Schwarz inequality gives
\be
\label{sharp ellipticity bound}
    A_{11}^0(\boldsymbol{\xi}, \boldsymbol{\Xi})
    \geq \ellfunc \, |\boldsymbol{\Xi}|^2,
\ee
with equality when $\boldsymbol{\Xi}$ and $\vfield$ are
collinear. The bound~\eqref{sharp ellipticity bound} is therefore
sharp, and uniform ADN ellipticity of the linearized system is
\emph{equivalent} to the interior condition
\be
\label{interior ellipticity condition}
    \inf_{\confD} \ellfunc > 0,
\ee
which is exactly the quantity controlled by the third condition
in the definition~\eqref{definition periodic O delta} of
$\mathcal{O}_\delta$. Since
$|\vfield|^2 = (\partial_\xi u + \lambda)^2
+ (\partial_\eta u)^2$, we have the identity
\be
\label{interior boundary identity}
    1-\mu(\gamma-1)\mathfrak{B}
    - \mu \frac{(\partial_\xi u+\lambda)^2}{|\qfield|^2}
    = \ellfunc + \mu \frac{(\partial_\eta u)^2}{|\qfield|^2},
\ee
so the coefficient of $\Xi_1^2$ in $A_{11}^0$ dominates
$\ellfunc$ pointwise, with equality precisely on
$\partial\confD$ where $\partial_\eta u = 0$.

\begin{lemma}[Subsonic implies ellipticity]
\label{subsonic ADN lemma}
Suppose that $(u,w,Q,\lambda,\mu) \in (\F,\G)^{-1}(0)$
with $\mu > 0$ is subsonic in that
\[
    \inf_{\confD} \left(c^2
    - \frac{|\nabla u+\lambda \mathbf{e}_1|^2}
    {|\nabla w + \mathbf{e}_2|^2} \right) > 0.
\]
Then, the corresponding linearized operator
$D_{(u,w)} (\F,\G)(u,w,Q,\lambda; \mu)$ is elliptic in
the ADN sense.
\end{lemma}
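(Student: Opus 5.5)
The plan is to reduce the statement to the interior condition \eqref{interior ellipticity condition}, which the discussion preceding the lemma has already shown to be equivalent to uniform ADN ellipticity. The principal symbol $A^0$ is upper triangular with $\det A^0 = |\boldsymbol{\Xi}|^2 A_{11}^0$. The bound \eqref{sharp ellipticity bound} gives $A_{11}^0 \geq \ellfunc |\boldsymbol{\Xi}|^2$. Hence $\det A^0 \geq \ellfunc|\boldsymbol{\Xi}|^4$, and it suffices to show $\inf_{\confD}\ellfunc>0$. For ADN weights, I would take $s_j = 0$ and $t_k = 2$ for both unknowns. The lower-order coupling terms in \eqref{Lqpde}, namely $\Delta u\,\dot{\mathfrak B}$ and $\nabla\mathfrak B\cdot\nabla\dot u$, are then not principal. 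The principal part is exactly the second-order part read off in \eqref{linsys:coeffs}.

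The key step is to express $\ellfunc$ through the sound speed on the zero set. By Bernoulli's law \eqref{conformal Bernoulli law}, $\mathfrak B = B - H(\varrho)$ in $\overline{\confD}$, where $\varrho>0$ is the density reconstructed as in Lemma~\ref{periodic equivalence lemma}.

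\emph{Polytropic case $\gamma>1$.} Here $\mu = 1/((\gamma-1)B)$, so
\[
1-\mu(\gamma-1)\mathfrak B = \frac{(\gamma-1)H(\varrho)}{(\gamma-1)B} = \mu\,\alpha^2\varrho^{\gamma-1} = \mu\, c^2,
\]
using \eqref{definition sound speed} in nondimensional form. Therefore
\[
\ellfunc = \mu\left(c^2 - \frac{|\vfield|^2}{|\qfield|^2}\right).
\]

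\emph{Isothermal case $\gamma=1$.} Here $\mu=1/\alpha^2$ and $c^2=\alpha^2$, so directly
\[
\ellfunc = 1-\mu\frac{|\vfield|^2}{|\qfield|^2} = \mu\left(c^2-\frac{|\vfield|^2}{|\qfield|^2}\right).
\]

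In both cases the subsonic hypothesis together with $\mu>0$ gives $\inf_{\confD}\ellfunc>0$. Combined with the previous paragraph, this yields ADN ellipticity, uniformly in $\overline{\confD}$.

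The main point to check carefully is this identity. One must confirm that the nondimensionalized $H$ and $c^2$ are consistent, in particular that $(\gamma-1)H(\varrho) = \alpha^2\varrho^{\gamma-1} = c^2$ with $\rhos=1$. One must also confirm that $\mu>0$ legitimately allows dividing by $B$. Positivity of $\varrho$ follows from the third condition defining $\mathcal O$, as in the proof of Lemma~\ref{periodic equivalence lemma}. A minor secondary point is proper ellipticity, which is automatic in two dimensions for the real scalar diagonal entries: the roots of $A^0(\boldsymbol{\xi},\boldsymbol{\Xi}+\tau\boldsymbol{\Xi}')$ in $\tau$ split evenly between the half-planes since each factor is a definite real quadratic form.
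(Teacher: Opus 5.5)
Your proposal is correct and follows essentially the same route as the paper. Both arguments reduce ADN ellipticity to $\inf_{\confD}\ellfunc>0$ using the triangular symbol and \eqref{sharp ellipticity bound}. Both then use $\mu = 1/\alpha^2$ in the isothermal case, or Bernoulli's law with $\mu(\gamma-1)B = 1$ and $(\gamma-1)H(\varrho) = c^2$ in the polytropic case, to identify $\ellfunc = \mu\bigl(c^2 - |\vfield|^2/|\qfield|^2\bigr)$.

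The only cosmetic difference is where this identification is done. The paper carries it out in the physical variables via \eqref{ADN ellipticity condition physical}. You work directly in the conformal variables with \eqref{conformal Bernoulli law}.
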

\begin{proof}
Translating back to the physical variables, we have shown
that ellipticity in the ADN sense holds provided
\be
\label{ADN ellipticity condition physical}
    \frac{1}{\mu} > \frac{\gamma+1}{2} |\nabla \phi|^2
    + \frac{\gamma-1}{F^2} y
    = |\nabla \phi|^2
    + (\gamma-1) \left( \frac{1}{2} |\nabla \phi|^2
    + \frac{1}{F^2} y \right).
\ee
First consider the case $\gamma = 1$, so that in light of
the definition of $\mu$ in~\eqref{definition Mach number},
this becomes $\alpha^2 > |\nabla \phi|^2$.
Recall that for isothermal flow, $\alpha^2 = c^2$ is the
(constant) speed of sound. Likewise, when $\gamma > 1$, we
find instead from Bernoulli's law~\eqref{nondimensional
Bernoulli law} that the
condition~\eqref{ADN ellipticity condition physical} can be
rewritten as $|\nabla \phi|^2 < c^2$.
\end{proof}

On $\Gamma_1$, where $\partial_\eta u = 0$, the quantity
$\frac{1}{2} (\partial_\xi u + \lambda)^2/|\qfield|^2$ is
half the squared relative fluid speed, and by the kinematic
condition the relative velocity is tangent to the free
surface. Thus $\partial_\xi u + \lambda = 0$ at a point of
$\Gamma_1$ means that the relative velocity vanishes there
entirely, meaning there is a (relative) \emph{stagnation point}. We emphasize that
the absence of stagnation is \emph{not} implied by
ellipticity: at $\mu = 0$ one has $\ellfunc \equiv 1$ while
$\partial_\xi u + \lambda$ may still vanish. As we will see
in Proposition~\ref{lopat prop}, it is the stagnation set
--- and not the ellipticity function $\ellfunc$ --- that
governs the boundary (Lopatinskii) condition.

\subsubsection*{Proper ellipticity and the Lopatinskii condition}

Fix a background state
$(u,w,\lambda,Q,\mu) \in (\F,\G)^{-1}(0)$ with $\mu \geq 0$.
The Schauder theory we are after gives a priori estimates in Hölder norms for the linear equation problem
\be
\label{genlinsystem}
\left\{
\begin{aligned}
    D_{(u,w,Q)}\F_1(u,w,\lambda, Q; \mu) (\dot u, \dot w, \dot Q)^T
    & = h_1 & \qquad & \textrm{in } \confD, \\
    \Delta \dot w & = h_2
    & \qquad & \textrm{in } \confD, \\
    \dot{\mathfrak{B}}(\dot u, \dot w) & = f
    & \qquad & \textrm{on } \Gamma_1, \\
    \partial_\eta \dot u & = g
    & \qquad & \textrm{on } \Gamma_1, \\
    \partial_\eta \dot u = 0, \quad \dot w & = 0
    & \qquad & \textrm{on } \Gamma_0,
\end{aligned}
\right.
\ee
where $D_{(u,w,Q)}\F_1$ denotes the first component
of~\eqref{Lqpde}. Its coefficients lie in
$C^{\beta}_\per(\overline{\confD})$ with bounds depending on $|Q|$ and $\| (u,w)\|_{C_\per^{2+\beta}}$, and its
principal symbol row is $(A_{11}^0, A_{12}^0)$
from~\eqref{linsys:coeffs}. We assign Douglis--Nirenberg
weights $s_1 = s_2 = 0$ to the two equations, and $t_1 = t_2 = 2$ to the two unknowns $\dot u$, $\dot w$; on $\Gamma_1$ both boundary operators are first
order, corresponding to weights $r_1 = r_2 = -1$, while on
$\Gamma_0$ we take $r_1 = -1$ and $r_2 = -2$. By the sharp
lower bound~\eqref{sharp ellipticity bound}, it follows that
\be
\label{gen:elliptic}
    \det A^0(\boldsymbol{\xi},\boldsymbol{\Xi})
    = |\boldsymbol{\Xi}|^2\, A_{11}^0(\boldsymbol{\xi},
    \boldsymbol{\Xi})
    \geq \Big( \inf_{\confD} \ellfunc \Big)\,
    |\boldsymbol{\Xi}|^4,
\ee
so the interior condition~\eqref{interior ellipticity
condition} implies that the system is uniformly elliptic in
the Douglis--Nirenberg sense.

To verify proper ellipticity, fix a point of
$\partial\confD$, where $\partial_\eta u = 0$, and write
$\tau$ and $\nu$ for the frequencies dual to the tangential
variable $\xi$ and the normal variable $\eta$,
respectively. Introduce
\be
\label{definition kappa}
    \kappa \colonequals \sqrt{
    \frac{1-\mu(\gamma-1)\mathfrak{B}}{\ellfunc}} \geq 1.
\ee

The symbol $A_{11}^0$ at this point reduces to
$\ellfunc \tau^2 + \big( 1 - \mu(\gamma-1)\mathfrak{B}\big)
\nu^2$, so for fixed $\tau \neq 0$ the roots $\nu \in
\mathbb{C}$ of $\det A^0(\tau, \placeholder)$ are
\be
\label{proper ellipticity roots}
    \nu = \pm i \tau, \qquad
    \nu = \pm \frac{i \tau}{\kappa}.
\ee
All four roots are purely imaginary, and for each
$\tau \neq 0$ exactly two lie in the upper half-plane;
the system is therefore properly elliptic. We record the
identity
\be
\label{mu c2 identity}
    \mu (\partial_\xi u + \lambda)^2
    = \ellfunc  (\kappa^2 - 1)\, |\qfield|^2
    \qquad \textrm{on } \partial\confD,
\ee
which follows directly from the definitions of $\ellfunc$
and $\kappa$.

Fix a point of $\Gamma_1$ and freeze the coefficients there. To
shorten the formulas, write $\sigma \colonequals \partial_\xi u +
\lambda$, and recall that $|\vfield|^2 = \sigma^2$ on $\Gamma_1$.
Using~\eqref{mu c2 identity}, the principal symbol at this point
becomes
\[
	A^0(\tau,\nu) = \begin{pmatrix}
		\ellfunc \big( \tau^2 + \kappa^2 \nu^2 \big)
		& \dfrac{\sigma \ellfunc (\kappa^2-1)\, \tau
		\big( w_\xi \tau + (w_\eta+1) \nu \big)}{|\qfield|^2}
		\\[6pt]
		0 & \tau^2 + \nu^2
	\end{pmatrix},
\]
so that $\det A^0 = \ellfunc (\tau^2+\nu^2)(\tau^2+\kappa^2\nu^2)$,
with roots in the upper half-plane $\nu = i\tau$ and
$\nu = i\tau/\kappa$ for $\tau > 0$, as already noted
in~\eqref{proper ellipticity roots}. The principal parts of the
boundary operators on $\Gamma_1$ --- the linearized Bernoulli
form~\eqref{linearized Bernoulli form}, whose zeroth-order term
$\dot w/F^2$ is subprincipal for the weight $r = -1$, and the
kinematic row --- give the boundary symbol
\[
	\mathcal{B}(\tau,\nu) = \begin{pmatrix}
		\dfrac{\sigma \tau}{|\qfield|^2}
		& -\dfrac{\sigma^2 \big( w_\xi \tau
		+ (w_\eta + 1)\nu \big)}{|\qfield|^4} \\[6pt]
		\nu & 0
	\end{pmatrix}.
\]
Following the scheme of \cite{volpert2011book1} (see also
\cite{haziot2023large}), we form the Lopatinskii matrix
\be
\label{definition Lopatinskii matrix}
	\Lambda(\tau) \colonequals \frac{1}{2\pi i} \oint_{\gamma}
	\mathcal{B}(\tau,\nu)\, \big( A^0 \big)^{-1}(\tau,\nu)\,
	\big( I, \ \nu I \big)\, d\nu,
\ee
where $\gamma$ is a contour in the upper half-plane enclosing the
roots of $\det A^0(\tau, \placeholder)$. The Lopatinskii
(complementing) condition at this boundary point is that the
$2 \times 2$ column minors $M_{ij}$ of the $2\times 4$ matrix
$\Lambda$ do not all vanish; the uniform version required for
the Schauder theory is
$\inf_{\Gamma_1} \sum_{i<j} |M_{ij}(\pm 1)| > 0$.

\begin{proposition}[Lopatinskii condition]
\label{lopat prop}
Assume the interior ellipticity
condition~\eqref{interior ellipticity condition}. Then the
following hold.
\begin{enumerate}[label=\rm(\alph*)]
\item \label{lopat surface part}
At each point of $\Gamma_1$ and for each $\tau \neq 0$,
\be
\label{lopat determinant}
	\sum_{i<j} |M_{ij}(\tau)| \, \geq \, |M_{14}(\tau)|
	= \frac{|\tau|\,\sigma^2}{4\kappa \ellfunc \,|\qfield|^3},
\ee
and every minor $M_{ij}$ vanishes wherever $\sigma = 0$. In
particular, the Lopatinskii condition holds uniformly on
$\Gamma_1$ if and only if
\be
\label{definition sigma0}
	\sigma_0 \colonequals
	\inf_{\Gamma_1} \frac{\sigma^2}{|\qfield|^3} > 0.
\ee
\item \label{lopat bed part}
On $\Gamma_0$, where the boundary rows are
$(\partial_\eta \dot u, \dot w)$ with symbol rows $(\nu,0)$ and
$(0,1)$, the minor $M_{12}$ of the corresponding Lopatinskii
matrix equals $1/(4i\kappa^2 \ellfunc \tau) \neq 0$, so the
Lopatinskii condition holds there with no further assumptions.
\end{enumerate}
\end{proposition}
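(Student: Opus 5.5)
The plan is to compute the Lopatinskii matrix~\eqref{definition Lopatinskii matrix} explicitly by residues, exploiting that $A^0$ is upper triangular. With coefficients frozen at a point of $\Gamma_1$, write $a \colonequals \ellfunc(\tau^2+\kappa^2\nu^2)$, $d \colonequals \tau^2+\nu^2$, and $b$ for the off-diagonal entry. Then
\[
	(A^0)^{-1} = \begin{pmatrix} a^{-1} & -b\,(ad)^{-1} \\ 0 & d^{-1}\end{pmatrix}.
\]
Multiplying on the left by $\mathcal{B}(\tau,\nu)$ gives a $2\times 2$ matrix whose entries are rational in $\nu$. The only poles in the upper half-plane (say $\tau>0$; the case $\tau<0$ follows by symmetry) are the simple poles $\nu = i\tau/\kappa$ from $a$ and $\nu = i\tau$ from $d$. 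These are distinct because $\kappa \geq 1$. The degenerate case $\kappa = 1$, i.e.\ $\mu\sigma^2 = 0$ by~\eqref{mu c2 identity}, has $b = 0$, and there the residue calculus is even simpler.

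Multiplying by $(I,\nu I)$ yields the $2\times 4$ matrix $\Lambda(\tau)$. Its columns correspond to $(\dot u, \dot w, \partial_\nu\dot u, \partial_\nu \dot w)$-type data, and each entry is a sum of residues at $i\tau/\kappa$ and $i\tau$. The first row of $\Lambda$ comes from the Bernoulli row of $\mathcal{B}$ and the second from the kinematic row $(\nu,0)$.

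The key structural observations are as follows.

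First, the second row of $\mathcal{B}$ has a zero in its second entry. The kinematic row of $\mathcal{B}(A^0)^{-1}$ is therefore $(\nu/a,\ -\nu b/(ad))$. Its first entry has only the residue at $i\tau/\kappa$, equal to $1/(2i\kappa\ellfunc\cdot \kappa)$ up to simple factors.

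Second, every entry of the Bernoulli row carries a factor $\sigma$. The entries of $\mathcal{B}$ in that row are proportional to $\sigma$ and $\sigma^2$, and $b$ itself carries a factor $\sigma$. Consequently, where $\sigma=0$ the first row of $\Lambda$ vanishes identically, so every $2\times 2$ minor vanishes. This gives the second assertion in~\ref{lopat surface part}.

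For the lower bound, I would single out the minor built from column $1$ (the $\dot u$ column with factor $I$) and column $4$ (the $\dot w$ column with factor $\nu I$). In column $4$ the kinematic entry is $-\nu^2 b/(ad)$. In column $1$ the Bernoulli entry is $\sigma\tau/(|\qfield|^2 a)$, whose residue is at $i\tau/\kappa$ only. The Bernoulli entry of column $4$ combines $-\sigma^2(w_\xi\tau+(w_\eta+1)\nu)\nu/(|\qfield|^4 d)$ with the $b$-contribution. Computing the two residues of each of these terms and forming $M_{14}$ should produce cancellations in which the $w_\xi$-terms drop out. The answer should collapse to $|\tau|\sigma^2/(4\kappa\ellfunc|\qfield|^3)$; the factor $|\qfield|^3$ arises after using $w_\xi^2+(w_\eta+1)^2 = |\qfield|^2$. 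This algebra is the main obstacle. I expect to organize it by expressing $b$ through~\eqref{mu c2 identity} as $\sigma\ellfunc(\kappa^2-1)\tau(w_\xi\tau+(w_\eta+1)\nu)/|\qfield|^2$, so that the factor $\kappa^2-1$ cancels against the partial fraction $1/(ad)$, whose residues carry $1/(\tau^2(1-\kappa^{-2}))$.

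Once~\eqref{lopat determinant} holds, the equivalence with $\sigma_0>0$ is straightforward. Since $0<\inf\ellfunc\leq \ellfunc\leq 1 - \mu(\gamma-1)\mathfrak{B}$, the quantities $\kappa$ and $\ellfunc$ are bounded above and below on $\Gamma_1$ in terms of $\inf\ellfunc$ and the $C^1$ norm of $(u,w)$. Hence $|M_{14}(\pm1)|$ is comparable to $\sigma^2/|\qfield|^3$. Conversely, each $M_{ij}$ is bounded by a constant times $|\sigma|$, so all minors degenerate along any sequence where $\sigma^2/|\qfield|^3 \to 0$.

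For~\ref{lopat bed part}, the boundary symbol on $\Gamma_0$ is $\begin{pmatrix}\nu & 0\\ 0 & 1\end{pmatrix}$, so $\mathcal{B}(A^0)^{-1}$ is $\begin{pmatrix}\nu/a & -\nu b/(ad)\\ 0 & 1/d\end{pmatrix}$. The $(1,1)$ entry of $\Lambda$ is then the residue of $\nu/a$ at $i\tau/\kappa$, and the $(2,2)$ entry is the residue of $1/d$ at $i\tau$. Their product, $\tfrac{1}{2\kappa^2\ellfunc}\cdot\tfrac{1}{2i\tau}$, gives $M_{12} = 1/(4i\kappa^2\ellfunc\tau)$ with no condition beyond interior ellipticity.
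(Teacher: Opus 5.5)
Your route is the same as the paper's: invert the triangular symbol, evaluate $\Lambda$ by residues at $i\tau/\kappa$ and $i\tau$, single out $M_{14}$, and use the factor $\sigma$ in the Bernoulli row. Your part (b) and the vanishing of all minors at $\sigma=0$ are complete and agree with the paper.

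The gap is the formula for $M_{14}$, which is the whole content of part (a). You only predict it ("should collapse"), and the predicted mechanism is wrong: $w_\xi$ does not drop out. What cancels is the contribution of the pole $i\tau/\kappa$, which carries the $\kappa$-weighted combination $\kappa w_\xi+i(w_\eta+1)$. What survives is
\[
M_{14}=\frac{\tau\,\sigma^2\bigl(w_\xi+i(w_\eta+1)\bigr)}{4\kappa\ellfunc\,|\qfield|^4},
\]
and only its modulus produces $|\qfield|^3$.

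The paper closes this with two structural facts you did not use. Write $q(\nu)\colonequals w_\xi\tau+(w_\eta+1)\nu$ and keep your $a,b,d$.

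\textbf{No pole at $i\tau$ in the Bernoulli row.} Using \eqref{mu c2 identity} one checks
\[
\mathcal{B}_{11}b-\mathcal{B}_{12}a=\frac{\sigma^2\ellfunc\kappa^2\,q\,d}{|\qfield|^4}.
\]
Hence the Bernoulli row of $\mathcal{B}(A^0)^{-1}$ is
\[
\frac{\sigma}{|\qfield|^2a}\Bigl(\tau,\ -\frac{\sigma\ellfunc\kappa^2 q}{|\qfield|^2}\Bigr),
\]
and the factor $d$ cancels. So the residue $R_2$ at $i\tau$ has only one nonzero entry, $(R_2)_{22}=\sigma(w_\xi+i(w_\eta+1))/(2|\qfield|^2)$.

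\textbf{$R_1$ has rank one.} $\det\bigl(\mathcal{B}(A^0)^{-1}\bigr)=\det\mathcal{B}/(ad)$ has only a simple pole at $i\tau/\kappa$. Therefore the residue $R_1$ there has rank one; the paper verifies this directly.

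\textbf{Conclusion.} Since $\Lambda=(R_1+R_2,\ \tfrac{i\tau}{\kappa}R_1+i\tau R_2)$, the $R_1$--$R_1$ part of $M_{14}$ vanishes. This leaves
\[
M_{14}=i\tau\,(R_1)_{11}(R_2)_{22},\qquad (R_1)_{11}=\frac{\sigma}{2i\kappa\ellfunc|\qfield|^2},
\]
which is the formula above. The same computation shows the first row of $\Lambda$ is just the first row of $R_1$ (rescaled by $1$ and $i\tau/\kappa$), with no $1/(\kappa^2-1)$ factors. That gives the uniform bound $|M_{ij}|\lesssim|\sigma|$ your ``if and only if'' argument relies on.

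Minor point: the two poles are distinct because $\kappa>1$, not $\kappa\ge1$. You do treat $\kappa=1$ separately, so this does not affect the argument.
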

\begin{proof}
By~\eqref{mu c2 identity}, $\sigma \neq 0$ if and only if
$\kappa > 1$ (for $\mu > 0$), while at $\mu = 0$ one has
$\kappa = 1$ regardless of $\sigma$. First assume that $\sigma \neq 0$. For $\mu > 0$ the poles
$\nu = i\tau/\kappa$ and $\nu = i\tau$ of $\det A^0(\tau,
\placeholder)$ are simple and distinct, and evaluating the
residues in~\eqref{definition Lopatinskii matrix} gives
\[
\begin{aligned}
R_1
&\colonequals
\operatorname*{Res}_{\nu = i\tau/\kappa}
\mathcal{B}(A^0)^{-1} =
\frac{1}{2 i \kappa}
\begin{pmatrix}
\dfrac{\sigma}{\ellfunc |\qfield|^2}
&
-\dfrac{\kappa \sigma^2
\bigl(\kappa w_\xi + i(w_\eta+1)\bigr)}
{|\qfield|^4}
\\[10pt]
\dfrac{i}{\kappa \ellfunc}
&
-\dfrac{i\sigma
\bigl(\kappa w_\xi + i(w_\eta+1)\bigr)}
{|\qfield|^2}
\end{pmatrix},
\\[12pt]
R_2
&\colonequals
\operatorname*{Res}_{\nu = i\tau}
\mathcal{B}(A^0)^{-1} =
\begin{pmatrix}
0 & 0 \\[4pt]
0 &
\dfrac{\sigma\bigl(w_\xi+i(w_\eta+1)\bigr)}
{2|\qfield|^2}
\end{pmatrix}.
\end{aligned}
\]
so that $\Lambda(\tau) = (R_1+R_2 \ \ \tfrac{i\tau}{\kappa}
R_1 + i\tau R_2 )$. A direct check shows $R_1$ has rank
one --- its first row equals $-(i\kappa\sigma/|\qfield|^2)$
times its second, so $\det R_1 = 0$ --- and the first entry of
$R_2\mathbf{e}_2$ vanishes. Consequently, in the minor
\[
	M_{14} = \det \begin{pmatrix} R_1\mathbf{e}_1 & \tfrac{i\tau}{\kappa}R_1\mathbf{e}_2 + i\tau
R_2\mathbf{e}_2 \end{pmatrix},
\]
 the $R_1$-contribution cancels identically
and only the cross term survives:
\[
	M_{14} = i\tau \det \begin{pmatrix} R_1\mathbf{e}_1
	& R_2\mathbf{e}_2 \end{pmatrix} 
	= \frac{\tau\,\sigma^2\big(w_\xi+i(w_\eta+1)\big)}
	{4\kappa \ellfunc\,|\qfield|^4}.
\]
Since $|w_\xi + i(w_\eta+1)| = |\qfield|$, this
gives~\eqref{lopat determinant} for $\mu>0$; all
$\kappa$-dependence beyond the prefactor cancels. At the
incompressible endpoint $\mu=0$ (still $\sigma\neq0$) the poles
merge at $\nu=i\tau$, the off-diagonal entry of $A^0$ vanishes,
and a direct evaluation at the single pole returns the same
formula with $\kappa=1$, so~\eqref{lopat determinant} extends
continuously across $\mu=0$.

Next, suppose instead that $\sigma = 0$. Every entry of the first row of
$\mathcal{B}$ carries a factor of $\sigma$, so the first row of
$\Lambda(\tau)$ vanishes identically and every $2\times2$ minor
is zero, consistent with $|M_{14}|\to0$ as $\sigma\to0$.
Combining the two cases proves part~\ref{lopat surface part}.

For part~\ref{lopat bed part}, the first column of the
$\Gamma_0$ Lopatinskii matrix is
\[
	\begin{pmatrix} \displaystyle \operatorname*{Res}_{\nu=i\tau/\kappa} \frac{\nu}{A_{11}^0} \\ 
\displaystyle 0 \end{pmatrix}  = \begin{pmatrix}\displaystyle \frac{1}{2\kappa^2 \ellfunc} \\ 0\end{pmatrix}.
\]
A similar residue calculation shows that the second entry of the second column is
$1/(2i\tau)$. Since the lower-left entry vanishes, we therefore find that
\[
	M_{12} 	= \frac{1}{4i\kappa^2 \ellfunc\tau} \neq 0. \qedhere
\]
\end{proof}

\begin{remark}
\label{kappa degeneracy remark}
The symbol degeneracy $\kappa=1$ and the breakdown of the
Lopatinskii condition are distinct phenomena. By~\eqref{mu c2
identity}, $\kappa=1$ occurs either at the incompressible
endpoint $\mu=0$, where it is harmless, or, when $\mu\neq0$,
precisely at a stagnation point $\sigma=0$. Consequently, for a
compressible solution ($\mu>0$) the no-stagnation condition
$\sigma_0>0$ is equivalent to $\kappa>1$ holding uniformly on
$\Gamma_1$, and the resulting lower bound is continuous across
$\mu=0$, so the ellipticity and Lopatinskii constants remain
uniform along the entire bifurcation curve.
\end{remark}

\subsubsection*{Schauder estimates}
We conclude the subsection with a proof of the Schauder-type a priori estimate for the linearized system~\eqref{genlinsystem}.

\begin{lemma}[Schauder estimate]
\label{schauder lemma}
Fix $\delta > 0$ and let
$(u,w,Q,\lambda,\mu) \in \mathcal{O}_\delta$. Then, for all
$(\dot u, \dot w, \dot Q) \in \Xspace_\per \times \mathbb{R}$,
\be
\label{schauder2}
\begin{aligned}
    \| \dot u \|_{C_\per^{2+\beta}(\confD)}
    + \| \dot w \|_{C_\per^{2+\beta}(\confD)}
    & \leq C \bigg(
    \Big\| D_{(u,w)} \F(u,w,Q,\lambda;\mu)
    \big(\dot u, \dot w\big)
    \Big\|_{\Yspace_{1\per} \times \Yspace_{2\per}} \\
    & \qquad
    + \Big\| D_{(u,w)} \G(u,w,Q,\lambda;\mu)
    \big(\dot u, \dot w \big)
    \Big\|_{\Yspace_{3\per}} \\
    & \qquad
    + |\dot Q|
    + \| \dot u \|_{C^0(\confD)}
    + \| \dot w \|_{C^0(\confD)} \bigg),
\end{aligned}
\ee
where the constant $C > 0$ depends only on $\delta$,
$\beta$, $\gamma$, $F$, $kh$, and upper bounds for
$|\lambda|$, $|\mu|$, $|Q|$, and
$\| (u,w) \|_{\Xspace_\per}$.
\end{lemma}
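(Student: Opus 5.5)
The estimate will follow from the general Agmon--Douglis--Nirenberg Schauder theory for elliptic systems satisfying the complementing condition; the work is to check that every hypothesis of that theory holds with constants controlled by the quantities listed in the lemma. Writing the linearized problem in the form~\eqref{genlinsystem}, I take $h_1 = D_{(u,w,Q)}\F_1(\dot u,\dot w,\dot Q)$, $h_2 = \Delta\dot w$, $g=0$ on $\Gamma_1$ (since $(\dot u,\dot w)\in\Xspace_\per$), and $f = D_{(u,w)}\G(\dot u,\dot w) + \dot Q$. Here $\dot Q$ enters only as a constant shift in the boundary datum, which is why $|\dot Q|$ appears on the right-hand side of~\eqref{schauder2}. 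Because the domain is a periodic strip, I will view the problem on a single period with periodic extension, or equivalently on a cylinder $\mathbb{T}\times[-1,0]$. This is a compact manifold with boundary, so the local ADN estimates patch together with finitely many charts. The $C^0$ terms on the right account for the lower-order contributions.

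The hypotheses will be verified in the following order.
\begin{enumerate}
\item The coefficients of~\eqref{Lqpde} and~\eqref{linearized Bernoulli form} are rational functions of $\nabla u$, $\nabla w$, $\Delta u$ and $\nabla^2 u$, $\nabla^2 w$, with denominators powers of $|\qfield|$. Hence they lie in $C^\beta$ (interior) and $C^{1+\beta}$ (boundary operators) with norms bounded by $\|(u,w)\|_{\Xspace_\per}$, $|\lambda|$, $|\mu|$, $|Q|$, $F$, and a lower bound on $|\qfield|$.
\item The lower bound on $|\qfield|$ holds on all of $\overline{\confD}$. On $\Gamma_1$ it comes from the chord-arc term in $\mathcal{O}_\delta$, which bounds $|\partial_\zeta f|$ from below. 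In the interior, $\partial_\zeta f$ is holomorphic and nonvanishing, as argued in Lemma~\ref{periodic equivalence lemma}. I would extract a quantitative bound by a compactness argument over the set of admissible $w$ with bounded $C^{2+\beta}$ norm. Alternatively, I would observe that the estimate only requires it near the boundary and use the maximum principle applied to $\log|\partial_\zeta f|$, which is harmonic; its boundary values are controlled on $\Gamma_1$ by the chord-arc term and on $\Gamma_0$ by the Hopf argument.
\item Uniform ellipticity is~\eqref{gen:elliptic}, with $\inf\ellfunc$ bounded below by a constant depending on $\delta$. This uses the third condition of $\mathcal{O}_\delta$: when $\mu\neq 0$ it gives $\ellfunc \ge |\mu|\delta$, and near $\mu=0$ it gives $\ellfunc \ge 1-|\mu|C$. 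Combining the two regimes gives a positive lower bound depending only on $\delta$ and the stated quantities.
\item Proper ellipticity follows from~\eqref{proper ellipticity roots}.
\item The uniform Lopatinskii condition follows from Proposition~\ref{lopat prop}. On $\Gamma_1$, the bound~\eqref{lopat determinant} is bounded below by $\sigma_0>\delta$ (the fourth condition in $\mathcal{O}_\delta$) times $1/(4\kappa\ellfunc)$. Here $\kappa$ is bounded above because $\ellfunc$ is bounded below, and $\ellfunc\le 1+C|\mu|$ is bounded above. On $\Gamma_0$, the minor $M_{12}$ is bounded away from zero by the same bounds.
\end{enumerate}

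Once these uniform constants are in hand, I will invoke the ADN Schauder estimate (Agmon--Douglis--Nirenberg II, Theorem~9.3, in the form used in~\cite{haziot2023large}). With weights $s_j=0$, $t_j=2$ and boundary orders $r=-1,-1$ on $\Gamma_1$ and $r=-1,-2$ on $\Gamma_0$, it yields
\[
\|\dot u\|_{C^{2+\beta}}+\|\dot w\|_{C^{2+\beta}} \le C\big(\|h_1\|_{C^\beta}+\|h_2\|_{C^\beta}+\|f\|_{C^{1+\beta}(\Gamma_1)}+\|\dot u\|_{C^0}+\|\dot w\|_{C^0}\big).
\]
Substituting $f$ then gives~\eqref{schauder2}. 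The constant in ADN depends only on the ellipticity constant, the minimum of the Lopatinskii minors, the coefficient norms, and the domain geometry, so it is controlled as claimed.

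The main obstacle is uniformity across $\mu=0$ and near small $\mu$. The principal symbol changes structure there: the roots $\nu=i\tau$ and $\nu=i\tau/\kappa$ merge, and the Lopatinskii matrix is computed by residues at simple poles that collide. I would handle this by using the contour-integral definition~\eqref{definition Lopatinskii matrix}, which is continuous in the coefficients as long as the contour stays away from the real axis. The minors are then continuous functions of the frozen coefficients on the compact parameter set determined by $\mathcal{O}_\delta$ and the bounds, and Proposition~\ref{lopat prop} with Remark~\ref{kappa degeneracy remark} shows they are nonvanishing there, including at $\kappa=1$. Continuity plus compactness then gives a uniform positive lower bound. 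The second potential issue is the strong second-order coupling term $A_{12}^0$. In the ADN framework it is simply part of the principal symbol, and it was already absorbed in the ellipticity and Lopatinskii computations, so no separate treatment is needed.
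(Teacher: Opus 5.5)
Your proposal is correct and follows the paper's proof essentially step for step. The shared steps are: the same identification of data with $f = D_{(u,w)}\G(\dot u,\dot w)+\dot Q$ and $g=0$; the same two-regime lower bound on $\ellfunc$ from the third condition of $\mathcal{O}_\delta$; the fourth condition giving $\sigma_0\ge\delta$, so that Proposition~\ref{lopat prop} (with $\kappa$ and $\ellfunc$ bounded above) yields a uniform Lopatinskii constant on $\Gamma_1$ and $\Gamma_0$; and then the ADN Schauder estimate. The one place you go beyond the paper is the lower bound on $|\qfield|$ needed to control the coefficients. It is cleanest via your second route applied on all of $\overline{\confD}$, since the interior coefficients also contain $|\qfield|^{-2}$ and ``only near the boundary'' is not enough: $|\qfield|\ge\delta$ on $\Gamma_1$ by the chord-arc term, $|\qfield|\ge\delta$ on $\Gamma_0$ by comparing $w+\eta+1$ with $\delta(\eta+1)$, and the minimum principle for the harmonic function $\log|\partial_\zeta f|$ then gives $|\qfield|\ge\delta$ everywhere. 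For that last step, take nonvanishing of $\partial_\zeta f$ as implicit in $\F$ being defined rather than quoting Lemma~\ref{periodic equivalence lemma}, whose extra hypotheses are not available at a general point of $\mathcal{O}_\delta$.
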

\begin{proof}
We first verify that the hypotheses of the linear elliptic
theory hold with constants controlled by the stated
quantities. By the definition~\eqref{definition periodic
O delta} of $\mathcal{O}_\delta$, the ellipticity function
satisfies $\inf_\confD \ellfunc \geq |\mu|\delta$ when
$\mu \neq 0$; on the other hand, the elementary bound
$|\ellfunc - 1| \leq |\mu| C_1$ holds for all $\mu$, where
$C_1$ is controlled by $\gamma$, $F$, $kh$, and
$\|(u,w)\|_{\Xspace_\per}$. If $|\mu| \leq 1/(2C_1)$ the
latter gives $\ellfunc \geq 1/2$, while otherwise the
former gives $\ellfunc \geq \delta/(2C_1)$; in either case
\[
    \inf_\confD \ellfunc \geq \ellfunc_0
    \colonequals \min\Big\{ \tfrac12,\
    \tfrac{\delta}{2 C_1} \Big\} > 0.
\]
Likewise, the fourth condition in the definition of $\mathcal{O}_\delta$ gives
$\sigma_0 \geq \delta$. Since moreover
$\ellfunc \leq 1 + |\mu| C_1$ by the same elementary bound
and $\kappa^2 = 1 + \mu(\partial_\xi u + \lambda)^2/
(\ellfunc |\qfield|^2)$ is bounded above by the stated
quantities and $\ellfunc_0$, we conclude
from~\eqref{gen:elliptic},
\eqref{proper ellipticity roots}, and
Proposition~\ref{lopat prop} that the boundary value
problem~\eqref{genlinsystem} is uniformly elliptic in the
Douglis--Nirenberg sense, properly elliptic, and satisfies
the Lopatinskii condition uniformly on $\partial\confD$,
with all constants controlled as claimed. The classical
Schauder theory for elliptic systems satisfying the
complementing condition
\cite[Theorem 9.2]{agmon1964estimates} therefore yields, for
every solution $(\dot u, \dot w) \in
C_\per^{2+\beta}(\overline\confD)$
of~\eqref{genlinsystem} with data $h_1, h_2 \in
C_\per^{\beta}(\overline{\confD})$ and $f, g \in
C_\per^{1+\beta}(\Gamma_1)$,
\be
\label{adn schauder}
\begin{aligned}
    \| \dot u \|_{C_\per^{2+\beta}(\confD)}
    + \| \dot w \|_{C_\per^{2+\beta}(\confD)}
    & \leq C \Big(
    \| h_1 \|_{C_\per^{\beta}(\confD)}
    + \| h_2 \|_{C_\per^{\beta}(\confD)}
    + \| f \|_{C_\per^{1+\beta}(\Gamma_1)}
    + \| g \|_{C_\per^{1+\beta}(\Gamma_1)} \\
    & \qquad \qquad
    + \| \dot u \|_{C^0(\confD)}
    + \| \dot w \|_{C^0(\confD)} \Big).
\end{aligned}
\ee

It remains to match~\eqref{linsystem} with the data
in~\eqref{genlinsystem}. Given
$(\dot u, \dot w, \dot Q) \in \Xspace_\per \times
\mathbb{R}$, set
\[
\begin{aligned}
    h_1 & \colonequals D_{(u,w,Q)}\F_1(u,w,Q,\lambda; \mu)
    \begin{pmatrix} \dot u \\ \dot w \\ \dot Q\end{pmatrix},
    \\
    h_2 & \colonequals \Delta \dot w,
    \\
    f &\colonequals \dot{\mathfrak{B}}
    = D_{(u,w,Q)}\G(u,w,Q,\lambda; \mu)
    \begin{pmatrix} \dot u \\ \dot w \\ \dot Q \end{pmatrix} {}+ \dot Q,
    \end{aligned}
\]
where we have used~\eqref{Lcbernoulli}. The remaining side
conditions are homogeneous: the kinematic datum is
$g = \partial_\eta \dot u = 0$ on $\Gamma_1$, and
$\partial_\eta \dot u = 0$, $\dot w = 0$ on $\Gamma_0$,
all built into the space $\Xspace_\per$. Under the
identifications
$\Yspace_{1\per}\times\Yspace_{2\per}
= C_\per^{\beta} \times C_\per^{\beta}$ and
$\Yspace_{3\per} = C_\per^{1+\beta}(\Gamma_1)$,
the data lie in exactly the spaces required
by~\eqref{adn schauder}, and
\begin{align*}
    \| h_1 \|_{C_\per^{\beta}}
    + \| h_2 \|_{C_\per^{\beta}}
    + \| f \|_{C_\per^{1+\beta}(\Gamma_1)}
    & \leq
    \big\| D_{(u,w,Q)}\F
    (\dot u, \dot w, \dot Q)\big\|_{\Yspace_{1\per}
    \times\Yspace_{2\per}} \\
    & \quad
    + \big\| D_{(u,w,Q)}\G
    (\dot u, \dot w, \dot Q)\big\|_{\Yspace_{3\per}}
    + |\dot Q|,
\end{align*}
the term $|\dot Q|$ accounting for the splitting
$f = D_{(u,w,Q)}\G(\dot u,\dot w,\dot Q) + \dot Q$.
Applying~\eqref{adn schauder} with $g = 0$ and the data
bounded as above gives~\eqref{schauder2}.
\end{proof}

\begin{remark}[Generality]
\label{generality remark}
The proof of Lemma~\ref{schauder lemma} uses only the
following structural features of~\eqref{genlinsystem}:
the principal symbol is upper triangular with entries of
the form~\eqref{linsys:coeffs}, the interior coefficients
lie in $C^{\beta}_\per(\overline\confD)$ and the boundary coefficients in $C^{1+\beta}_\per(\Gamma_1)$, and the
boundary operators have the principal rows appearing in
the proof of Proposition~\ref{lopat prop}. The same
Schauder estimate, at any order $k \geq 0$ with data
$h_1, h_2 \in C^{k+\beta}_\per$ and $f, g \in
C^{k+1+\beta}_\per(\Gamma_1)$ and solutions in
$C^{k+2+\beta}_\per$, therefore holds for any linear
elliptic system of this class, with the constant depending
on the same quantities. The case $k=0$, stated above, is
the one used in the sequel; higher values of $k$ would
apply if the background solution $(u,w)$ and its
perturbation were assumed to have correspondingly higher
regularity.
\end{remark}

\subsection{Local properness and Fredholmness}

We now prove that as a consequence of the ellipticity of the system --- and specifically the resulting Schauder estimate of Lemma~\ref{schauder lemma} --- the abstract operator $(\F,\G)$ has the requisite compactness and Fredholmness properties needed to carry out (analytic) global continuation of the local curves constructed in Section~\ref{local bifurcation section}.

\begin{lemma}[Local properness]
\label{local properness lemma}
For each $\delta > 0$, the zero-set of $(\F,\G)$ is locally proper: for any closed and bounded set $K \subset \mathcal{O}_\delta$, we have that 
\[
	(\F,\G)^{-1}(0) \cap K \subset\subset \Xspace_\per \times \mathbb{R}^3.
\]
\end{lemma}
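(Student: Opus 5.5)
The approach is to take an arbitrary sequence $(u_n,w_n,Q_n,\lambda_n,\mu_n)$ in $(\F,\G)^{-1}(0)\cap K$ and extract a subsequence converging in $\Xspace_\per\times\mathbb{R}^3$. Since $K$ is bounded, the scalars $(Q_n,\lambda_n,\mu_n)$ are bounded and, after passing to a subsequence, converge in $\mathbb{R}^3$. The functions $(u_n,w_n)$ are bounded in $C_\per^{2+\beta}(\overline\confD)$. Because the domain is a periodic strip, so effectively compact, Arzel\`a--Ascoli together with interpolation gives a further subsequence converging in $C_\per^{2+\beta'}$ for every $\beta'<\beta$ to some limit $(u,w)$, which lies in $C_\per^{2+\beta}$. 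Since $K$ is closed, it only remains to upgrade this to convergence in $C^{2+\beta}$, and the limit will then automatically belong to $K$.

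For the upgrade, I would apply the Schauder estimate of Lemma~\ref{schauder lemma} to differences. Set $\dot u=u_n-u_m$ and $\dot w=w_n-w_m$. Write $(\F,\G)(U_n)-(\F,\G)(U_m)=0$ using the mean value formula
\[
\int_0^1 D(\F,\G)\big(U_m+t(U_n-U_m)\big)\,dt\,(U_n-U_m)=0 .
\]
A cleaner choice is to linearize at the single point $U_n$ and collect the remainder: the difference $(\F,\G)(U_m)-(\F,\G)(U_n)-D(\F,\G)(U_n)(U_m-U_n)$ is quadratic in $U_m-U_n$. Because $\F$ is quasilinear (second derivatives enter linearly, multiplied by coefficients that depend on first derivatives and on $w$), this remainder is bounded in $C^\beta$ by
\[
C\,\|U_m-U_n\|_{C^{1+\beta}}\,\|U_m-U_n\|_{C^{2+\beta}} + C\big(|\lambda_n-\lambda_m|+|\mu_n-\mu_m|+|Q_n-Q_m|\big),
\]
and the boundary term $\G$ is handled similarly in $C^{1+\beta}(\Gamma_1)$.

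Lemma~\ref{schauder lemma} applies at $U_n\in\mathcal{O}_\delta$ with a constant that is uniform in $n$, because $K$ is bounded. It yields
\[
\|U_n-U_m\|_{C^{2+\beta}}\le C\big(\|U_n-U_m\|_{C^{1+\beta}}\|U_n-U_m\|_{C^{2+\beta}}+|\text{parameter differences}|+\|U_n-U_m\|_{C^0}\big).
\]
For large $n,m$ the factor $\|U_n-U_m\|_{C^{1+\beta}}$ is small, so the first term on the right can be absorbed into the left. The remaining terms tend to zero, which makes the sequence Cauchy in $C^{2+\beta}$.

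I expect the main obstacle to be the precise bound on the quasilinear remainder. In the highest-order terms, such as $\mu\,\vfield\cdot\nabla\mathfrak{B}$, which contain $\nabla^2 w$ and $\nabla^2 u$, I need to make sure that every second-derivative difference appears multiplied by a small coefficient difference, measured in $C^\beta$ norms of first derivatives, and never by an uncontrolled factor. The structure $\F=(1-\mu(\gamma-1)\mathfrak{B})\Delta u-\mu\vfield\cdot\nabla\mathfrak{B}$ suggests this works. If absorption proves awkward, the fallback is to use the integrated linearization along the segment between $U_m$ and $U_n$. This requires that segment to stay in some $\mathcal{O}_{\delta/2}$, which holds once the points are $C^{1+\beta}$-close, because the defining conditions of $\mathcal{O}_\delta$ involve only first derivatives.
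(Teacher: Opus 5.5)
Your proposal is correct, but it upgrades to $C^{2+\beta}$ convergence by a different mechanism than the paper.

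\textbf{The paper's route.} After extracting a $C^2_\per$-convergent subsequence, the paper uses translation invariance in $\xi$. The pair $(\partial_\xi u_j,\partial_\xi w_j)$ solves the \emph{homogeneous} linearized problem at $(u_j,w_j,Q_j,\lambda_j,\mu_j)$, so Lemma~\ref{schauder lemma} bounds it uniformly in $C^{2+\beta}_\per$. This gives $C^\beta_\per$-convergence of every second derivative except $\partial_\eta^2$. For $w$, the missing derivative comes from $\partial_\eta^2 w_j=-\partial_\xi^2 w_j$. For $u$, the paper differentiates $\F_1=0$ in $\eta$ and applies a scalar Schauder estimate to $\partial_\eta u_j$, which has homogeneous Dirichlet data on $\partial\confD$.

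\textbf{Your route.} You apply Lemma~\ref{schauder lemma} to the differences $U_m-U_n$, linearized at $U_n$, and absorb the quadratic remainder. Your remainder bookkeeping is right. Write $\F_1$ as coefficients depending on $(w,\nabla u,\nabla w,\lambda,\mu)$ times second derivatives, plus lower order terms. With $V=U_m-U_n$, the remainder then has the form $O(|\nabla V|^2)\,\nabla^2 U_m+O(|\nabla V|)\,\nabla^2 V$, plus parameter differences. Its $C^\beta$ norm is therefore at most $C\|V\|_{C^{1+\beta}}\|V\|_{C^{2+\beta}}$ plus parameter terms, and the same holds for $\G$ in $C^{1+\beta}(\Gamma_1)$. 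So your fallback via an averaged linearization, which Lemma~\ref{schauder lemma} does not directly cover, is unnecessary.

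\textbf{What each approach buys.} Your version only ever applies the estimate to elements of $\Xspace_\per$, since differences keep the parity, mean-zero and boundary constraints. You never need to know a priori that $\partial_\xi u_j\in C^{2+\beta}$; the paper takes this third-derivative regularity for granted when it applies an a priori estimate to $\partial_\xi u_j$. You also never differentiate the equation in $\eta$, and nothing depends on the translation symmetry. The paper's version avoids any nonlinear remainder estimate or absorption step. It also yields a uniform bound on $\partial_\xi U_j$ in $C^{2+\beta}_\per$, a genuine gain of one tangential derivative on the zero set.

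\textbf{One detail to state explicitly.} Your remainder constant depends on a lower bound for $|\nabla w+\mathbf{e}_2|$ along the segment from $U_n$ to $U_m$. This follows from a uniform lower bound at the points $U_n$ once $\|w_m-w_n\|_{C^1}$ is small. Lemma~\ref{schauder lemma} tacitly needs the same uniform bound. You can obtain it along a subsequence from Hurwitz's theorem applied to $\partial_\zeta f_n$, combined with the lower bounds on $\Gamma_1$ (from the chord-arc condition) and on $\Gamma_0$ (from the maximum principle) that are built into $\mathcal{O}_\delta$.
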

\begin{proof}
Fix $\delta > 0$. Let $\{ (u_j, w_j, Q_j, \lambda_j, \mu_j)\} \subset \Xspace_\per \times \mathbb{R}^3$ be a bounded sequence in $(\F,\G)^{-1}(0) \cap \mathcal{O}_\delta$; we must show that $\{ (u_j, w_j, Q_j, \lambda_j, \mu_j)\}$ has a subsequence converging in $\Xspace_\per \times \mathbb{R}^3$. Note that each term in the sequence is a solution of the PDE system
\[
\left\{
\begin{aligned}
	\Delta u_j & = \mu_j \left(  (\gamma-1) \Delta u_j + (\nabla u_j  + \lambda_j \mathbf{e}_1) \cdot \nabla  \right)  \left( \frac{1}{2} \frac{|\nabla u_j + \lambda_j \mathbf{e}_1|^2}{|\nabla w_j+\mathbf{e}_2|^2} + \frac{1}{F^2} (w_j+\eta) \right)   \\
	\Delta w_j & = 0
\end{aligned}
\right.
\qquad \textrm{in } \confD
\]
together with the boundary conditions 
\[
\left\{ \begin{aligned}
	\partial_\eta u_j & = 0 & \qquad & \textrm{on } \partial \confD \\
	\frac{1}{2} \frac{| \nabla u_j+\lambda_j \mathbf{e}_1|^2}{|\nabla w_j + \mathbf{e}_2|^2} + \frac{1}{F^2} w_j -Q_j  &= 0 & \qquad & \textrm{on } \Gamma_1 \\
	w_j & = 0 & \qquad & \textrm{on } \Gamma_0.
\end{aligned} \right.
 \]
 Of course, it suffices to show that $\{(u_j,w_j, Q_j,\lambda_j,\mu_j)\}$ has a convergent subsequence in $C_\per^{2+\beta}(\overline{\confD}) \times \mathbb{R}^3$. By boundedness we know that, passing to subsequences, $(Q_j,\lambda_j,\mu_j) \to (Q, \lambda, \mu)$ for some $Q, \lambda, \mu \in \mathbb{R}$. Moreover, by compactness of the embedding, we can pass to a further subsequence to conclude that there exists $u, w \in C_\per^{2+\beta}(\overline{\confD})$ such that $u_j \to u$ and $w_j \to w$ in $C_\per^2(\overline{\confD})$. We upgrade this to convergence in the full $C^{2+\beta}_\per$ norm using the following standard argument.
  
Because of translation invariance in the $\xi$-direction, $(\partial_\xi u_j, \partial_\xi w_j)$ satisfies the ``quasilinearized'' problem
\[
	D_{(u,w)} (\F,\G)(u_j,w_j,Q_j,\lambda_j,\mu_j) \begin{pmatrix} \partial_\xi u_j \\ \partial_\xi w_j \end{pmatrix} = \begin{pmatrix} 0 \\ 0 \\ 0 \end{pmatrix}.
\]
This is a system with interior coefficients uniformly bounded in $C^{\beta}_\per(\overline{\confD})$ and boundary coefficients uniformly bounded in $C^{1+\beta}_\per(\Gamma_1)$. Thanks to the definition of $\mathcal{O}_\delta$, it is moreover elliptic in the ADN sense with a constant depending only on $\delta$. Applying the Schauder estimates of Lemma~\ref{schauder lemma} (whose proof does not use the parity or mean-zero restrictions built into $\Xspace_\per$, so that it applies to $(\partial_\xi u_j, \partial_\xi w_j)$ even though these have the opposite parity), we therefore have that $\{ (\partial_\xi u_j, \partial_\xi w_j) \}$ is uniformly bounded in $C^{2+\beta}_\per(\overline{\confD})$. 
This gives convergence of all of the derivatives of $w_j$ of order up to two in $C^{\beta}_\per(\overline{\confD})$ except perhaps $\partial_\eta^2 w_{j}$. But, since each $w_j$ is harmonic, so too is the limit $w$, and we have
\[
	\partial_\eta^2 w_j = -  \partial_\xi^2 w_j \longrightarrow_{C^\beta_\per} - \partial_\xi^2 w = \partial_\eta^2 w.
\]

By the same reasoning, we have that all derivatives of $u_j$ up to order two except perhaps $\partial_\eta^2 u_j$ are convergent in $C_\per^{\beta}(\overline{\confD})$. Differentiating the equation $\F_1(u_j,w_j,Q_j,\lambda_j,\mu_j) = 0$ in $\eta$ shows that $\partial_\eta u_j$ solves
\[
	D_u \F_1(u_j, w_j, Q_j,\lambda_j, \mu_j)\, \partial_\eta u_j = g_j,
\]
where $\{g_j\}$ is uniformly bounded in $C^\beta_\per(\overline\confD)$, and $D_u\F_1$ is a uniformly elliptic PDE operator with coefficients bounded uniformly in $C^{\beta}_\per(\overline{\confD})$. As $\partial_\eta u_j$ satisfies homogeneous Dirichlet conditions on $\partial\confD$, standard (scalar) Schauder estimates imply that it is uniformly bounded in $C^{2+\beta}_\per(\overline{\confD})$, which completes the argument. 
\end{proof}

The next lemma records the fact that the linearization of $(\F,\G)$ is semi-Fredholm at any solution in $\mathcal{O}_\delta$; this is merely a consequence of the definition of $\mathcal{O}$ and linear Schauder estimates in Lemma~\ref{schauder lemma}.

\begin{lemma}[semi-Fredholmness]
\label{periodic Fredholmness lemma}
For any $(u,w,Q,\lambda,\mu) \in (\F,\G)^{-1}(0) \cap \mathcal{O}$, the linearized mapping $D_{(u,w,Q)}(\F,\G)$ is semi-Fredholm $\Xspace_\per \times \mathbb{R} \to \Yspace_\per$.
\end{lemma}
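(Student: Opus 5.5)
The plan is to deduce semi-Fredholmness from the Schauder estimate of Lemma~\ref{schauder lemma} combined with compactness of Hölder embeddings on the periodic strip, via the standard Peetre-type lemma. Recall that if $L\colon X\to Y$ is bounded and there is a compact operator $K\colon X\to Z$ with $\|x\|_X \le C(\|Lx\|_Y + \|Kx\|_Z)$, then $\kernel L$ is finite dimensional and $\range L$ is closed; that is, $L$ is upper semi-Fredholm.

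Fix $(u,w,Q,\lambda,\mu)\in(\F,\G)^{-1}(0)\cap\mathcal{O}$, so that it lies in $\mathcal{O}_\delta$ for some $\delta>0$. Write $L\colonequals D_{(u,w,Q)}(\F,\G)(u,w,Q,\lambda,\mu)\colon \Xspace_\per\times\mathbb{R}\to\Yspace_\per$. By Lemma~\ref{schauder lemma}, for all $(\dot u,\dot w,\dot Q)$ we have
\[
\|(\dot u,\dot w)\|_{C^{2+\beta}_\per} \le C\Big(\|L(\dot u,\dot w,\dot Q)\|_{\Yspace_\per} + |\dot Q| + \|\dot u\|_{C^0} + \|\dot w\|_{C^0}\Big).
\]
Here $D_{(u,w)}$ and $D_{(u,w,Q)}$ differ only by the $-\dot Q$ term in the $\G$ component, and this is absorbed into $|\dot Q|$. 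Adding $|\dot Q|$ to both sides gives the Peetre inequality on $\Xspace_\per\times\mathbb{R}$ with
\[
K(\dot u,\dot w,\dot Q)=(\dot u,\dot w,\dot Q)\in C^0_\per\times C^0_\per\times\mathbb{R}.
\]
This $K$ is compact: $\dot Q$ ranges over a one-dimensional space, and $C^{2+\beta}_\per(\overline{\confD})\hookrightarrow C^0_\per(\overline{\confD})$ is compact by Arzelà--Ascoli, since periodicity reduces everything to the compact cell $\overline{\confD^0}$.

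Next I run the standard argument. For the kernel, the unit ball of $\kernel L$ satisfies $\|x\|\le C\|Kx\|$. Any sequence in it has a subsequence with $Kx_n$ Cauchy, hence $x_n$ Cauchy, so the unit ball is compact and $\dim\kernel L<\infty$. For closed range, choose a closed complement $X_1$ of $\kernel L$, which exists because the kernel is finite dimensional. I then claim that $\|x\|\le C'\|Lx\|$ on $X_1$. If not, there are $x_n\in X_1$ with $\|x_n\|=1$ and $Lx_n\to0$. Compactness of $K$ gives a subsequence along which $x_n$ is Cauchy, with limit $x\in X_1$, $\|x\|=1$ and $Lx=0$, a contradiction. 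Consequently $\range L=L(X_1)$ is closed. Hence $L$ is semi-Fredholm, with finite-dimensional kernel and closed range.

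The only real content, and the step I expect to need care, is checking that the constant in Lemma~\ref{schauder lemma} is uniform for this fixed background. This holds because the constant depends only on $\delta$ and norms of the fixed point. One also needs the $C^0$ terms to range over the whole space with the parity and mean-zero conditions imposed. Since these conditions are closed subspace restrictions, compactness is preserved.
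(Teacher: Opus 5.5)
Your proof is correct and takes essentially the same route as the paper. Both combine the Schauder estimate of Lemma~\ref{schauder lemma} with the compact embedding of $C^{2+\beta}_\per$ into a lower-order Hölder space, then apply the standard Peetre-type argument to get a finite-dimensional kernel and closed range. The only difference is cosmetic: the paper first discards the $\dot Q$ direction as a finite-rank perturbation and works with $D_{(u,w)}(\F,\G)$ on $\Xspace_\per$, whereas you keep $\dot Q$ and absorb it into the compact operator, which is equally valid.
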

\begin{proof}
First note that it suffices to establish the semi-Fredholmness of $D_{(u,w)} (\F,\G)$ as mapping $\Xspace_\per \to \Yspace_\per$, since this differs from the linearized operator by a finite-rank perturbation. Let $(u_*,w_*,Q_*,\lambda_*,\mu_*) \in \mathcal{O}_\delta$ be given, and consider the corresponding linearized operator 
\[
	\mathscr{L} = ( \mathscr{L}_1, \mathscr{L}_2, \mathscr{L}_3 ) \colonequals D_{(u,w)}(\F,\G)(u_*,w_*,Q_*, \lambda_*, \mu_*) \colon \Xspace_{\per} \to \Yspace_\per.
\]
As a consequence of the Schauder estimates from Lemma~\ref{schauder lemma}, which hold in view of the definition of $\mathcal{O}_\delta$, we have the bound
\begin{align*}
	\| \dot u \|_{C_\per^{2+\beta}(\confD)} + \| \dot w \|_{C_\per^{2+\beta}(\confD)}  & \leq C \Big( \| \dot u \|_{C_\per^{\beta}(\confD)} + \| \dot w \|_{C_\per^{\beta}(\confD)} + \left\| \mathscr{L}_1 \begin{pmatrix} \dot u \\ \dot w \end{pmatrix} \right\|_{C_\per^{\beta}(\confD)} \\ 
	& \qquad   + \left\| \mathscr{L}_2 \begin{pmatrix} \dot u \\ \dot w \end{pmatrix} \right\|_{C_\per^{\beta}(\confD)}  +  \left\| \mathscr{L}_3  \begin{pmatrix} \dot u \\ \dot w \end{pmatrix} \right\|_{C^{1+\beta}(\Gamma_1)}\Big),
\end{align*}
where the constant $C > 0$ depends only on lower bounds on $\delta$ and upper bounds on $|Q_*|$, $|\lambda_*|$, $\mu_*$, and the $C^{2+\beta}$ norms of $u_*$ and $w_*$. Since the spaces represented by the norms on the left-hand side of this inequality are compactly embedded in the spaces represented by the (interior) norms on the right-hand side, a standard argument gives that $\mathscr{L}$ is semi-Fredholm. 
\end{proof}

\subsection{Proof of the main theorem}
\label{proof of the main theorem section}

We now complete the global continuation of the Janzen--Rayleigh expansion for the periodic case. The primary tool is the analytic global implicit function theorem. 

\begin{proof}[Proof of Theorem~\ref{periodic global bifurcation theorem}]
Applying Theorem~\ref{homoclinic global ift} at the initial incompressible solution $(0,w_0, Q_0, \lambda_0, 0)$ furnishes the continuous  curve $\cm_\per$. Local uniqueness implies that $\cm_\loc^\per \subset \cm_\per$, which proves the statement in part~\ref{periodic extension part}. Since each point on $\cm_\per$ is necessarily in the connected component of $\mathcal{O}$ containing $(0,w_0, Q_0,\lambda_0, 0)$, the physicality of the solutions asserted in part~\ref{periodic physical part} is a consequence of Lemma~\ref{periodic equivalence lemma}.

Next consider the limiting behavior as $s \to \infty$ along $\cm_\per$. Thanks to Lemmas~\ref{local properness lemma} and \ref{periodic Fredholmness lemma}, the loss of compactness alternative~\ref{K loss of compactness alternative} and loss of Fredholmness alternative~\ref{K loss of fredholmness alternative} can be excluded, respectively. This leaves only  blowup~\ref{K blowup alternative} and the closed loop alternative~\ref{K loop}. 

Assume first that the latter of these occurs. Then it must be that $\mu(T) = \mu(0) = 0$ for some $T > 0$. At the initial solution $(0,w_0,Q_0,\lambda_0,0)$, we have by local uniqueness that every solution lies on $\cm_\loc^\per$, and hence $\mu(s) < 0$ for $0 < T-s \ll 1$. Therefore, there must exist some $s_* \in (0,T)$ with $\mu(s_*) = 0$. Thus, the alternative~\ref{periodic loop alternative} occurs.

Suppose instead that we have blowup, which here means specifically that
\begin{align*}
	N(s) & \colonequals \|u(s) \|_{C_\per^{2+\beta}(\confD)} + \| w(s) \|_{C_\per^{2+\beta}(\confD)} + |Q(s)| +|\mu(s)| \\
		& \qquad + \frac{1}{\dist{((u(s), w(s), Q(s), \lambda_0, \mu(s)), \partial\mathcal{O})}} \\
		& \longrightarrow \infty.
\end{align*} 
Recalling the definition of $\mathcal{O}$ in~\eqref{definition periodic O delta}, in order for $\cm_\per$ to come arbitrarily close to $\partial \mathcal{O}$, thanks to Lemma~\ref{subsonic ADN lemma}, we must have that either~\ref{boundary degeneracy alternative}, \ref{transsonic limit alternative}, or \ref{stagnation alternative} occurs. On the other hand, if the distance between $\cm_\per$ and $\partial\mathcal{O}$ is uniformly positive and $(u(s),w(s))$ is uniformly bounded in $\Xspace_\per$, then taking the mean of the dynamic condition~\eqref{cbernoulli periodic} over a period reveals that
\[
	Q(s) = \frac{1}{2\pi} \int_{-\pi}^\pi \frac{1}{2} \frac{|\nabla u(s)(\xi,0) + \lambda_0 \mathbf{e}_1|^2}{|\nabla w(s)(\xi,0) + \mathbf{e}_2|^2} \, d\xi,
\]
hence $Q(s)$ must likewise be uniformly bounded. This allows us to pare down the unboundedness statement to~\ref{periodic unboundedness alternative}.
\end{proof}

\section{Lightly compressible internal fronts}
\label{front section}

In this section, we study lightly compressible \emph{internal waves}. These are solutions to the two-phase isentropic Euler equations in two dimensions, where the phases are separated by a free boundary across which the normal stress (and hence pressure) and normal velocity are continuous, but the tangential velocity and density need not be. Periodic and solitary waves for this system have been constructed by many authors in the incompressible regime; see, for example, the survey~\cite[Section 7]{haziot2022traveling}. One of the distinctive features of the two-phase problem is the existence of \emph{front-type} solutions, called \emph{bores}, that have different upstream and downstream limiting forms. We will restrict our attention to these, both because they are comparatively more difficult to analyze, and to complement the theory of lightly compressible solitary and periodic waves in the previous sections. 

\subsection{Governing equations}

\begin{figure}
	\includegraphics[scale=1.]{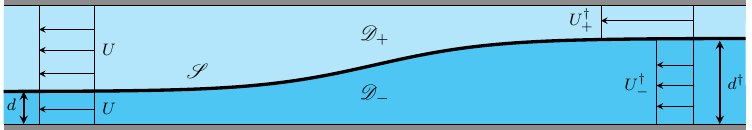}
	\caption{A front in a two-fluid system with the upper fluid region $\fluidD_+$ shaded lighter blue and the lower fluid region $\fluidD_-$ shaded darker blue. An impermeable bed bounds the configuration from below and an impermeable lid bounds it from above. The interface $\fluidS$ dividing the layers is bolded. In the upstream limit $x \to -\infty$, the relative velocity field in both layers tends to $(U,0)$; as is customary, the magnitude of the velocity is indicated by the length of the arrow. By contrast, in the downstream limit $x \to +\infty$, the relative velocity will be purely horizontal but take on different values in each layer.  }
	\label{front setup figure}
\end{figure}

Suppose that there are two immiscible fluids confined between impermeable horizontal barriers. Performing a standard nondimensionalization akin to~\eqref{nondimensionalization space and velocity}, we may choose Cartesian coordinates $(x,y)$ in a moving frame so that the line $\{y = 0\}$ corresponds to the rigid bed beneath the fluids and the line  $\{y = 1\}$ the rigid lid above them. This amounts to choosing the vertical height of the channel as the characteristic length scale; we will specify the characteristic velocity and density shortly. The region between the bed and lid is the disjoint union of an upper fluid layer occupying the region $\fluidD_+$ and a lower fluid layer occupying the region $\fluidD_-$. They are separated by a free boundary $\fluidS = \partial \fluidD_- \cap \partial \fluidD_+$. Assume that $\fluidS$ is given globally as the graph of a single-valued function $h = h(x)$, and that 
\be
\label{far-field depths}
	\lim_{x \to -\infty} h(x) = d, \qquad \lim_{x \to +\infty} h(x) = d^\dagger.
\ee
Thus $\fluidD_+$ lies above $\fluidS$ and below $\{ y = 1\}$, while $\fluidD_-$ lies below $\fluidS$ and above $\{ y = 0 \}$; see Figure~\ref{front setup figure}. We call $d$ the \emph{upstream depth} of the lower fluid layer and $d^\dagger$ the \emph{downstream depth} of the lower layer. Throughout this section, we will exclusively use a dagger superscript to denote downstream limiting quantities. As mentioned above, our main interest is in \emph{fronts}, meaning that $d \neq d^\dagger$.  Note that the upstream depth of the upper layer will then be $1-d$ and the corresponding downstream depth is $1-d^\dagger$.

The flow in each fluid layer is governed by the isentropic Euler equations, taking the equation of state in $\fluidD_\pm$ to be 
\[
	P_\pm(r) \colonequals \alpha_\pm^2 r.
\]
Note that the constants $\alpha_+, \alpha_- > 0$ may be distinct, but, for simplicity, we restrict ourselves to the isothermal case. Conservation of momentum and mass~\eqref{isentropic Euler equations} then take the form
\bse 
\label{two-phase isentropic Euler system}
\be
\label{two-phase steady Euler equation}
	\left\{\begin{aligned} 
		(\nabla \phi_\pm \cdot \nabla) \nabla \phi_\pm+ \frac{1}{\rho_\pm} \nabla P_\pm(\rho_\pm) & = -\frac{1}{F^2} \mathbf{e}_2  \\
		\nabla \cdot \left( \rho_\pm \nabla \phi_\pm \right) & = 0 
	\end{aligned} \right.
	\qquad \textrm{in } \fluidD_\pm,
\ee
where $\phi_\pm$ is the potential for the relative velocity in $\fluidD_\pm$, $\rho_\pm$ is the corresponding density, and $F$ is the Froude number.

As always, the kinematic condition requires that the relative velocity field is tangent along each boundary component, which in term of the relative potential becomes
\be
\label{two-phase kinematic condition}
	\left\{ \begin{aligned}
		\partial_y \phi_\pm & =  (\partial_x h) \partial_x \phi_\pm & \qquad & \textrm{on } \fluidS \\
		\partial_y \phi_+ & = 0 & \qquad & \textrm{on } \{ y = 1 \} \\
		\partial_y \phi_- & = 0 & \qquad & \textrm{on } \{ y = 0 \}.
	\end{aligned} \right.
\ee
The dynamic condition mandates that pressure is continuous over the interface $\fluidS$, that is
\be
\label{two-phase dynamic condition}
	P_+(\rho_+) = P_-(\rho_-) \qquad \textrm{on } \fluidS.
\ee
In particular, this implies that the traces of $\rho_+$ and $\rho_-$ are constant multiples of each other, though they need not themselves be constant:
\[
	\frac{\rho_+}{\rho_-} = \frac{\alpha_-^2}{\alpha_+^2} \equalscolon \rho_0 + \delta \qquad \textrm{on } \fluidS.
\]
Here, $\rho_0$ will be the (constant) ratio between the densities in the starting incompressible case and $\delta$ is a perturbative parameter.

Finally, to complete the system it is necessary to prescribe the asymptotic conditions upstream and downstream. The limiting velocity field in both directions is uniform and purely horizontal. For simplicity, we assume that in the upstream limit, the speed in both fluid layers is the same, and we denote it by $U$. In fact, we take $|U|$ to be the characteristic velocity, so that it suffices to assume $U=-1$. Nonetheless, we will often write $U$ so that its meaning in formulas is not obscured. In the downstream limit the limiting velocity in the layers $\fluidD_\pm$ is $U_\pm^\dagger$:
\be
\label{two-phase far-field condition}
	\partial_x \phi_\pm \to U \qquad \textrm{as } x \to -\infty, \qquad \partial_x \phi_\pm \to U_\pm^\dagger  \qquad \textrm{as } x \to +\infty.
\ee
\ese
There are not enough degrees of freedom to assume that $U_+^\dagger = U_-^\dagger$; see Corollary~\ref{conjugate flow corollary}. To complete the nondimensionalization, we take the characteristic density to be the upstream limiting value of $\rho_-$ on $\fluidS$.  

 Integrating the momentum equation in the $\fluidD_\pm$ yields Bernoulli's law
\be
\label{two-phase Bernoulli law}
	\frac{1}{2} |\nabla \phi_\pm|^2 + \frac{1}{F^2} y + H_\pm(\rho_\pm) = B_\pm \qquad \textrm{in } \fluidD_\pm,
\ee
with $B_+$ and $B_-$ being (possibly distinct) constants, and the enthalpy corresponding to the fluid in the region $\fluidD_\pm$ is 
\[
	H_\pm \colonequals  
			\alpha_\pm^2  \log{(\placeholder)}.
\]
Notice that the enthalpy, unlike the pressure, will have a jump across $\fluidS$:
\be
\label{enthalpy jump}
	(\rho_0 + \delta) H_+(\rho_+) - H_-(\rho_-) = \alpha_+^2 (\rho_0+\delta) \log{(\rho_0+\delta)} \qquad \textrm{on } \fluidS.
\ee
Importantly, the right-hand side above is constant.

As in the one-fluid case, we can eliminate the pressure and density to obtain a system for $\phi$ and $h$ alone. Taking the material derivative of Bernoulli's law~\eqref{two-phase Bernoulli law} and repeating the same process as in Section~\ref{governing equations section}, leads to  
\bse
\label{steady two-phase system}
\begin{equation}
\label{steady two-phase Euler}
	\Delta \phi_\pm = \mu_\pm  \nabla \phi_\pm \cdot \nabla  \left( \frac{1}{2}  |\nabla \phi_\pm|^2 +  \frac{1}{F^2} y \right) \qquad \textrm{in } \fluidD_\pm,
\end{equation}
where $\mu_\pm \colonequals 1/\alpha_\pm^2$ is the square Mach number for the flow in $\fluidD_\pm$. Note that $\mu_+/\mu_- = \rho_0+\delta$, so we will think of $\mu \colonequals \mu_-$ and $\delta$ as being the free parameters, and set
\[
	\mu_+ = (\rho_0+\delta) \mu_- \equalscolon (\rho_0+\delta) \mu.
\]
Using the enthalpy jump identity~\eqref{enthalpy jump} and Bernoulli's law, allows us to rewrite the dynamic condition on $\fluidS$ as 
\be
\label{steady two-phase dynamic}
	\frac{\rho_0+\delta}{2} |\nabla \phi_+|^2 - \frac{1}{2} |\nabla \phi_-|^2 + \frac{\rho_0 -1+\delta}{F^2} y = Q \qquad \textrm{on } \fluidS,
\ee
where we now define the Bernoulli constant
\be
\label{front Q relation}
	Q \colonequals (\rho_0+\delta) B_+ - B_- - \alpha_+^2 ( \rho_0+\delta)  \log{(\rho_0+\delta)} = \frac{\rho_0+\delta-1}{2} \left(U^2+\frac{d}{F^2} \right).
\ee
In the second equality, the value of $Q$ has been determined by evaluating the left-hand side of~\eqref{steady two-phase dynamic} using the far-field conditions~\eqref{far-field depths} and \eqref{two-phase far-field condition}. Finally, the kinematic condition on the interface and top and bottom rigid boundaries takes the form
\be
\label{steady surface two-phase kinematic}
	\nu_\pm \cdot\nabla\phi_\pm=0 \qquad \textrm{on } \fluidS  
\ee
with $\nu_\pm$ the outward unit norm to $\fluidD_\pm$ along $\fluidS$, and
\be
	\partial_y \phi_\pm = 0 \qquad \textrm{on } \fluidB_\pm.
\ee
\ese
Setting $(\mu,\delta) = (0,0)$, this reduces to the incompressible problem.

\subsection{Trivial solutions and conjugate flows}

We say a solution is \emph{trivial} provided that it satisfies~\eqref{two-phase steady Euler equation}--\eqref{two-phase dynamic condition} and has the form $\phi_\pm = U_\pm x$ for some constants $U_\pm$, which corresponds to the velocity field being completely horizontal in both fluids and with constant horizontal velocity $U_\pm$. Bernoulli's law~\eqref{two-phase Bernoulli law} then determines that the density has the form 
\be
\label{far-field density ansatz}
	\rho_\pm = R_\pm(y) \colonequals \exp{\left(  \mu_\pm (B_\pm-\frac{1}{2} U_\pm^2 - \frac{1}{F^2} y ) \right)}. 
	\ee

An essential consideration in both the existence and qualitative theory of fronts is a characterization of what Benjamin~\cite{benjamin1971unified} calls the \emph{conjugate flows} of the system. Observe that the upstream and downstream limiting relative potential of a front must themselves be trivial solutions.  The next proposition, which is well-known but stated here for convenience, shows that there exist a number of physical quantities that are constant on each vertical cross-section of the fluid domain.  Two trivial solutions that have the same value of all these conserved quantities are called \emph{conjugate}. In order for two trivial solutions to be the upstream and downstream limits of a front, they must be conjugate, which creates a necessary condition for the existence of a front connecting two far-field states.

\begin{proposition}[Conserved quantities]
For any classical solution to the steady isentropic Euler equations~\eqref{two-phase isentropic Euler system}, the mass flux in the lower layer $\mathfrak{m}_-$, mass flux in the upper layer $\mathfrak{m}_+$, and flow force $\mathfrak{s}$ defined by
\be
\label{conserved quantities}
\begin{aligned}
	\mathfrak{m}_-(x) & \colonequals \int_{\fluidD_-^x } \rho_- \partial_x \phi_- \, dy,  \qquad 
	\mathfrak{m}_+(x)  \colonequals  \int_{\fluidD_+^x} \rho_+ \partial_x \phi_+ \, dy, \\
	\mathfrak{s}(x) & \colonequals \int_{\fluidD_-^x} \left( P_-(\rho_-) + \rho_- (\partial_x \phi_-)^2 \right) \, dy  + \int_{\fluidD_+^x} \left( P_+(\rho_+) + \rho_+ (\partial_x \phi_+)^2 \right) \, dy
\end{aligned} 
\ee
are independent of $x$, where $\fluidD_\pm^x \colonequals \fluidD_\pm \cap ( \{ x \} \times (0,1)) $ is the $x$-section of the corresponding fluid layer.
\end{proposition}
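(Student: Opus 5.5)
The plan is to differentiate each quantity in $x$ and use the divergence theorem on the region between two vertical cross-sections, together with the kinematic conditions on the bed, lid and interface. Fix $x_1 < x_2$ and let $\Omega_\pm$ denote the part of $\fluidD_\pm$ with $x_1 < x < x_2$. Its boundary consists of the two sections $\fluidD_\pm^{x_1}$ and $\fluidD_\pm^{x_2}$, a piece of the rigid wall ($\{y=0\}$ or $\{y=1\}$), and a piece of $\fluidS$.

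For the mass fluxes, I would integrate the conservation of mass $\nabla\cdot(\rho_\pm\nabla\phi_\pm)=0$ over $\Omega_\pm$. The flux through the wall vanishes by $\partial_y\phi_\pm=0$. The flux through $\fluidS$ vanishes by the kinematic condition $\nu_\pm\cdot\nabla\phi_\pm=0$. What remains is $\mathfrak m_\pm(x_2)-\mathfrak m_\pm(x_1)=0$. Alternatively, one can differentiate under the integral, $\frac{d}{dx}\int_0^{h(x)}\rho_-\partial_x\phi_-\,dy$, using the Leibniz rule; the boundary term $h'\rho_-\partial_x\phi_-$ then cancels against $-\int\partial_y(\rho_-\partial_y\phi_-)\,dy$ evaluated at $y=h$, by \eqref{two-phase kinematic condition}.

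For the flow force, I would write the horizontal momentum equation in conservative form:
\[
\partial_x\big(P_\pm(\rho_\pm)+\rho_\pm(\partial_x\phi_\pm)^2\big)+\partial_y\big(\rho_\pm\partial_x\phi_\pm\partial_y\phi_\pm\big)=0.
\]
This follows from \eqref{two-phase steady Euler equation} multiplied by $\rho_\pm$, combined with mass conservation; gravity acts only vertically, so it does not appear. Integrating over $\Omega_-\cup\Omega_+$, the wall contributions vanish since $\partial_y\phi_\pm=0$ there. On $\fluidS$, the convective term $\rho\,\partial_x\phi\,(\nu\cdot\nabla\phi)$ vanishes by the kinematic condition, leaving only $P_\pm\nu_{\pm,1}$. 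Since $\nu_+=-\nu_-$ and $P_+(\rho_+)=P_-(\rho_-)$ on $\fluidS$ by \eqref{two-phase dynamic condition}, these two interface contributions cancel. Hence $\mathfrak s(x_2)=\mathfrak s(x_1)$.

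The main obstacle is only a technical one: justifying the divergence theorem on the unbounded-in-graph but locally bounded regions $\Omega_\pm$ with a $C^1$ interface. This is routine for classical solutions, since $\Omega_\pm$ is a bounded Lipschitz domain and all integrands are $C^1$ up to the boundary. The key bookkeeping is the pressure cancellation across $\fluidS$, which is exactly where the dynamic condition enters; the kinematic conditions alone handle everything else.
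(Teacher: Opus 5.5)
Your proposal is correct and takes the same route as the paper. Mass conservation is integrated between two sections, with the kinematic conditions removing the wall and interface fluxes. The horizontal momentum equation is written in conservative form and integrated, with the kinematic condition killing the convective interface terms and pressure continuity (with $\nu_+ = -\nu_-$) cancelling the pressure contributions of the two layers.
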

\begin{proof}
The $x$-independence of $\mathfrak{m}_\pm$ is an immediate consequence of the conservation of mass in~\eqref{two-phase steady Euler equation} and kinematic boundary condition~\eqref{two-phase kinematic condition}: for any $x_1 < x_2$, we have
\[
	\mathfrak{m}_-(x_2) - \mathfrak{m}_-(x_1) = \int_{x_1}^{x_2} \int_0^{h(x)} \nabla \cdot \left( \rho_- \nabla \phi_- \right) \, dy \, dx = 0.
\]
The proof for $\mathfrak{m}_+$ is similar. Likewise, the conservation of the flow force $\mathfrak{s}$ results from writing the horizontal component of the momentum equations in conservative form
\[
	\partial_x P_\pm + \partial_x \left( \rho_\pm (\partial_x \phi_\pm)^2 \right) + \partial_y \left( \rho_\pm (\partial_x \phi_\pm) (\partial_y \phi_\pm) \right) = 0 \qquad \textrm{in } \fluidD_\pm
\]
and integrating, using the kinematic condition and the continuity of the pressure to eliminate the boundary terms on $\fluidS$.
\end{proof}

Evaluating the conserved quantities above at upstream and downstream states of a classical solution yields a system of algebraic equations relating the parameters $(U, U_+^\dagger, U_-^\dagger, d, d^\dagger)$. These are called the \emph{conjugate flow equations} and are recorded in the next corollary. For the incompressible case, the system is polynomial, whereas for isentropic Euler, it contains transcendental equations.

\begin{remark}
Recall that we took the upstream limit of trace of $\rho_-$ on $\fluidS$ to be the characteristic density. Thus, in our nondimensionalized variables, we should have $\rho_-(x,h(x)) \to 1$ as $x \to -\infty$. Since the far-field density limits are all of the form~\eqref{far-field density ansatz}, we find that
\[
	 B_- = \frac{1}{2} U^2 + \frac{d}{F^2}, \qquad (\rho_0+\delta)\mu B_+ =  (\rho_0+\delta)\mu \left( \frac{1}{2} U^2 + \frac{d}{F^2}\right) +  \log{(\rho_0+\delta)}.
\]
In particular, this implies that $\rho_\pm \to R_\pm$ as $x \to -\infty$, where 
\be
\label{upstream density}
	R_-(y) = e^{-\mu \frac{y-d}{F^2}}, \qquad R_+(y) = (\rho_0+\delta) e^{-(\rho_0+\delta) \mu \frac{y-d}{F^2}},
\ee
and in the downstream limit, $\rho_\pm \to R_\pm^\dagger$ as $x \to +\infty$, with
\be
\label{downstream density}
	R_-^\dagger(y) = e^{-\mu\left( \frac{1}{2} ((U_-^\dagger)^2 - U^2) + \frac{y-d}{F^2}  \right)}, \qquad R_+^\dagger(y) = (\rho_0+\delta) e^{-(\rho_0+\delta) \mu \left( \frac{1}{2} ((U_+^\dagger)^2 - U^2) + \frac{y-d}{F^2} \right)}.
\ee
\end{remark}

\begin{corollary}[Conjugate flow equations]
\label{conjugate flow corollary}
If $(\phi,h)$ is a classical solution to the steady isentropic Euler equations~\eqref{two-phase isentropic Euler system}, then the limiting depths $d$ and $d^\dagger$ and velocities $U$, $U_+^\dagger$, and $U_-^\dagger$ must satisfy the following system of equations. 

\begin{enumerate}[label=\rm(\alph*)]
\item \textup{(Conservation of energy)} The dynamic condition on $\fluidS$ implies that
\be
\label{conservation of energy}
\begin{aligned}
	\frac{\rho_0-1+\delta}{2} U^2 + \frac{\rho_0-1+\delta}{F^2} d & = Q = 
	\frac{\rho_0+\delta}{2} (U_+^\dagger)^2 - \frac{1}{2} (U_-^\dagger)^2 + \frac{\rho_0-1+\delta}{F^2} d^\dagger. 
\end{aligned}
\ee
\item \textup{(Conservation of mass flux)}  The conservation of the mass flux in the upper and lower layers implies that
\be
\label{conservation of mass flux} 
\left\{
\begin{aligned}
	 Ue^{-\frac{\mu}{2} U^2 } \frac{ 1 - e^{- \frac{\mu d}{F^2}} }{1- e^{- \frac{\mu d^\dagger}{F^2} }} & =
	U_-^\dagger e^{-\frac{\mu}{2} (U_-^\dagger)^2 } \\
	 U e^{-(\rho_0+\delta)\frac{\mu}{2}U^2} \frac{  e^{\frac{(\rho_0+\delta) \mu (1-d)}{F^2}} - 1}{e^{\frac{(\rho_0+\delta)\mu (1-d^\dagger)}{F^2} } -1 } & =
	U_+^\dagger e^{-(\rho_0+\delta) \frac{\mu}{2}(U_+^\dagger)^2  }. 
\end{aligned}
\right.
\ee
\item \textup{(Conservation of flow force)} Denoting by $R_\pm$ the (trivial) density for the upstream flow in $\fluidD_\pm$ and $R_\pm^\dagger$ the trivial density for the downstream flow, then conservation of the flow force implies 
\be
\label{conservation of flow force}
\begin{aligned}
	 & \left(1 + \mu U^2 \right) \frac{R_-(d) - R_-(0)}{\mu} + \left(1 + (\rho_0+\delta) \mu U^2 \right) \frac{R_+(1) - R_+(d)}{(\rho_0+\delta)^2 \mu}  \\
		 & \qquad\qquad = \left(1 + \mu (U_-^\dagger)^2 \right) \frac{R_-^\dagger(d^\dagger) - R_-^\dagger(0)}{\mu}     + \left(1 + (\rho_0+\delta) \mu (U_+^\dagger)^2 \right) \frac{R_+^\dagger(1) -  R_+^\dagger(d^\dagger)}{(\rho_0+\delta)^2 \mu}.
		\end{aligned}
\ee
\end{enumerate}
\end{corollary}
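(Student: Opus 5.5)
The plan is to evaluate the three conserved quantities of the preceding proposition in the upstream limit $x \to -\infty$ and the downstream limit $x \to +\infty$, and equate the results. For a classical front, $h \to d$ and $h \to d^\dagger$, and $\nabla\phi_\pm \to (U,0)$ and $(U_\pm^\dagger,0)$. The densities converge to the trivial profiles~\eqref{upstream density} and~\eqref{downstream density}. I will assume the convergence is uniform on vertical cross sections, which is implicit in the notion of a classical front with these limits. Under that assumption each $x$-independent integral in~\eqref{conserved quantities} can be computed explicitly from the limiting trivial states.

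For part (a), I evaluate the dynamic condition~\eqref{steady two-phase dynamic} on $\fluidS$ as $x \to \pm\infty$. Upstream, $|\nabla\phi_\pm|^2 \to U^2$ and $y \to d$, which recovers the expression~\eqref{front Q relation} for $Q$. Downstream, $|\nabla\phi_\pm|^2 \to (U_\pm^\dagger)^2$ and $y \to d^\dagger$. Equating the two gives~\eqref{conservation of energy}.

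For part (b), I compute the mass fluxes using $\int_0^{d} R_-\,dy$. Upstream this equals $F^2\mu^{-1}\bigl(e^{\mu d/F^2}-1\bigr)$. Downstream the profile $R_-^\dagger$ carries the extra factor $e^{-\frac{\mu}{2}((U_-^\dagger)^2-U^2)}$, and the integral equals $F^2\mu^{-1}e^{\mu d/F^2}\bigl(1-e^{-\mu d^\dagger/F^2}\bigr)$. Setting $U\int R_- = U_-^\dagger\int R_-^\dagger$ and multiplying through by $e^{-\mu U^2/2}$ produces the first line of~\eqref{conservation of mass flux}. The upper layer is handled the same way, integrating $R_+$ over $(d,1)$ with rate $(\rho_0+\delta)\mu$; the prefactor $(\rho_0+\delta)$ cancels between the two sides. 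This gives the second line.

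For part (c), the isothermal law gives $P_\pm(\rho)+\rho U^2 = \rho(\alpha_\pm^2+U^2) = \rho(1+\mu_\pm U^2)/\mu_\pm$. Each term therefore reduces to a constant multiple of $\int R$, which is a difference of endpoint values of $R$ divided by $\mu_\pm/F^2$, since $R' = -\mu_\pm F^{-2}R$.

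The main bookkeeping obstacle is matching the normalizations and signs in~\eqref{conservation of flow force}. This includes the factor $(\rho_0+\delta)^2$ in the denominators, which comes from $\mu_+ = (\rho_0+\delta)\mu$ together with the prefactor in $R_+$, as well as the ordering of the endpoint differences and the overall factor $F^2$. Since the identity is a comparison of upstream with downstream quantities, common factors can be divided out. I would check these constants carefully, dividing out common factors as needed.
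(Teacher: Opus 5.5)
Your proposal is correct and follows the paper's proof: you evaluate the dynamic condition and the conserved quantities $\mathfrak{m}_\pm$, $\mathfrak{s}$ at the upstream and downstream trivial states and equate them, and your computations for parts (a) and (b) reproduce the stated formulas. One correction to your bookkeeping in (c): the factor $(\rho_0+\delta)^2$ comes from $\mu_+^2$, namely one $1/\mu_+$ from $\alpha_+^2 = 1/\mu_+$ in the equation of state and one from integrating $R_+' = -\mu_+ F^{-2} R_+$; the prefactor $(\rho_0+\delta)$ of $R_+$ stays inside $R_+(1)-R_+(d)$, and the common factor you divide out of both sides is $-F^2/\mu$.
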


\begin{proof}
The conservation of energy~\eqref{conservation of energy} simply results from evaluating the Bernoulli condition~\eqref{steady two-phase dynamic} with the hypothetical upstream and downstream flows. Likewise, the first equation in~\eqref{conservation of mass flux} is just found by rearranging the equality
\[
	U \left( R_-(d) - R_-(0) \right) = \lim_{x\to -\infty} \mathfrak{m}_-(x) = \lim_{x \to +\infty} \mathfrak{m}_-(x) = U_-^\dagger \left( R_-^\dagger(d^\dagger) - R_-^\dagger(0) \right).
\]
The second conservation of mass equation in~\eqref{conservation of mass flux} follows from the same argument but in the upper layer.

Equating the upstream and downstream limits of the force results in the identity 
\begin{align*}
	\mathfrak{s} & = \int_0^d \left( P_-(R_-) + R_- U^2 \,  \right) dy + \int_d^1 \left( P_+(R_+) + R_+ U^2 \,  \right) dy \\
		& = \int_0^{d^\dagger} \left( P_-(R_-^\dagger) + R_-^\dagger (U_-^\dagger)^2 \,  \right) dy + \int_{d^\dagger}^1 \left( P_+(R_+^\dagger) + R_+^\dagger (U_+^\dagger)^2 \,  \right) dy.
\end{align*}
Using the explicit forms of the pressure and of the upstream~\eqref{upstream density} and downstream densities~\eqref{downstream density}, these integrals can be evaluated to yield
\begin{align*}
	 & \left(\frac{1}{\mu_-} + U^2 \right) \frac{1}{\mu_-} (R_-(d) - R_-(0)) + \left(\frac{1}{\mu_+} + U^2 \right) \frac{1}{\mu_+} ( R_+(1) - R_+(d) ) \\
		\qquad & = \left(\frac{1}{\mu_-} + (U_-^\dagger)^2 \right) \frac{1}{\mu_-}   (R_-^\dagger(d^\dagger) - R_-^\dagger(0) )   + \left(\frac{1}{\mu_+} + (U_+^\dagger)^2 \right) \frac{1}{\mu_+} (R_+^\dagger(1) -  R_+^\dagger(d^\dagger) ).
\end{align*}
Multiplying by $\mu$, we arrive at the identity~\eqref{conservation of flow force}.
\end{proof}

\begin{remark}
\label{conjugate flow remark}
Letting $\delta =0$ and sending $\mu \to 0$, this recovers the well-known conjugate flow equations for the incompressible case; see, for example, \cite[Appendix A]{Laget1997interfacial} or \cite[Section 7.3]{chen2022center}. In that setting, note that conservation of mass flux~\eqref{conservation of mass flux} in the lower layer forces 
\[
	 U_-^\dagger = U \frac{d}{d^\dagger}, \qquad  U_+^\dagger = U \frac{1-d}{1-d^\dagger},
\]
which allows one to formulate the conjugate flow equation entirely in terms of the unknowns $(U,d,d^\dagger,\rho_0,F)$. This remains possible in the compressible setting so long as $U_\pm^\dagger \in (-1/\sqrt{\mu_\pm},0)$, which is the interval on which $t \mapsto t \exp{(-\mu_\pm t^2/2)}$ is injective.
\end{remark}

\subsection{Flattening transform}

\begin{figure}
	\includegraphics{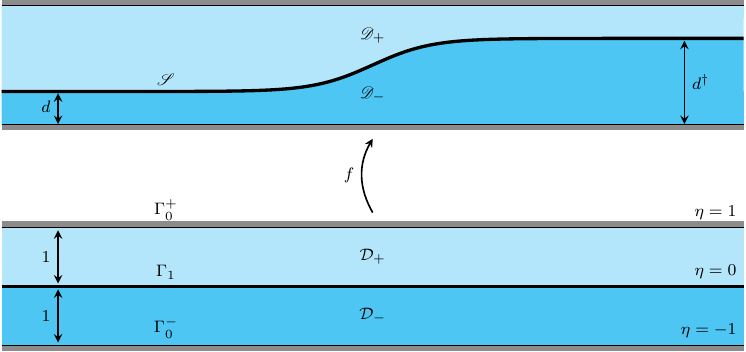}
	\caption{The flattening transform and canonical domain $\confD = \confD_+ \cup \confD_-$. Note that in contrast to the previous sections, $f$ is \emph{not} a conformal mapping.}
	\label{front flattening figure}
\end{figure}

To formulate the problem~\eqref{steady two-phase system} as an abstract operator equation, we once more make a change of coordinates to fix the domain. The conformal mapping approach of the previous section is less appealing here as one would need a separate conformal mapping for each fluid layer, and then a ``conformal welding'' problem arises as the two mappings give conflicting parameterizations of the interface. For that reason, we instead use a more robust, though less elegant, flattening transform that simply rescales the vertical variable along each vertical cross-section of the domain. We continue to use $(\xi,\eta)$ for the independent variables in the flattened domain, which is now the strip
\[
	\confD \colonequals \confD_+ \cup \confD_-, \qquad \confD_\pm \colonequals \{ (\xi,\eta) \in \mathbb{R}^2 : 0 < \pm \eta < 1 \}.
\]
The boundary components are denoted 
\[
	\Gamma_1 \colonequals \mathbb{R} \times \{ 0 \}, \qquad \Gamma_0 \colonequals \Gamma_0^+ \cup \Gamma_0^-, \qquad \Gamma_0^\pm \colonequals \mathbb{R} \times \{ \pm 1 \}.  
\]
Let 
\[
	w = w(\xi) \colonequals h(\xi) -d
\]
be the free surface profile in the flattened domain variables and normalized so that it vanishes in the upstream limit. Then, the coordinate map, which we denote again by $f$, is defined by
\be
\label{definition flattening transform}
	(x,y) = f(\xi,\eta) \colonequals (\xi, f_2(\xi,\eta)), \qquad 
	f_2(\xi,\eta) \colonequals 
		\left\{
		\begin{aligned}
			(1-\eta) (d+w(\xi) -1) + 1 & \qquad  \textrm{in } \confD_+ \\	
			(1+\eta)(d+w(\xi)) & \qquad  \textrm{in } \confD_-.
		\end{aligned}\right.
\ee
Note that $\fluidD_\pm = f(\confD_\pm)$, $\fluidS = f(\Gamma_1)$, and the top and bottom rigid boundaries are likewise the images under $f$ of $\Gamma_0^+$ and $\Gamma_0^-$, respectively. Observe that $\partial_\eta f_{2+} = 1-d-w$ in $\confD_+$ and $\partial_\eta f_{2-} = d+w$ in $\confD_-$. Finally, define
\be
\label{definition upm}
	u_\pm \circ f^{-1} \colonequals \phi_\pm - U x,
\ee
which measures the deviation of the relative potential from its the upstream limit in $\fluidD_\pm$, but pulled back to the flattened domain $\confD_\pm$. See Figure~\ref{front flattening figure} for an illustration.

The steady system~\eqref{steady two-phase Euler} is equivalent to the following problem for $(u,w)$:
\bse
\label{flattened equations}
\be
\left\{ \begin{aligned}
	\Delta^w u_+ & = (\rho_0+\delta) \mu  \mathfrak{D}_u^w  \left( \frac{1}{2}  |\nabla^w u_+ + U \mathbf{e}_1|^2 +  \frac{1}{F^2} ( (1-\eta)(w+d-1) +1 ) \right) &\quad& \textrm{in } \confD_+ \\
	\Delta^w u_- & = \mu  \mathfrak{D}_u^w  \left( \frac{1}{2}  |\nabla^w u_-+ U\mathbf{e}_1|^2 +  \frac{1}{F^2} (\eta+1)(w+d) \right) &\quad& \textrm{in } \confD_-
\end{aligned} \right.
\ee
where the differential operators above are given by
\[
	\nabla^w \colonequals (\mathcal{J}^{-1})^T \nabla_{(\xi,\eta)}, \qquad \mathfrak{D}_u^w \colonequals \nabla^w (u_\pm+U\xi) \cdot \nabla^w, \qquad \Delta^w \colonequals \nabla^w \cdot \nabla^w,
\]
and $\mathcal{J}$ is the Jacobian of $f$
\[
	\mathcal{J} = \mathcal{J}(\xi,\eta) 
		\colonequals Df(\xi,\eta) = \begin{pmatrix}
						1 & 0 \\
						\partial_\xi f_2 & \partial_\eta f_2
					\end{pmatrix}.
\]
Likewise, the kinematic conditions become
\be
\label{steady two-phase kinematic flat}
\left\{ \begin{aligned}
	\partial_\eta u_\pm & = 0 & \qquad & \textrm{on } \Gamma_0^\pm \\
	 \partial_{\nu_+}^w (u_++U\xi) = \partial_{\nu_-}^w (u_-+U\xi)  & = 0 & \qquad & \textrm{on } \Gamma_1,
\end{aligned} \right.
\ee
with $\partial_{\nu_\pm}^w$ being the pullback of the outward (non-normalized) normal derivative relative to $\fluidD_\pm$ along $\fluidS$:
\[
	\partial_{\nu_+}^w = \left( (\partial_\xi w) \partial_\xi - \frac{1+(\partial_\xi w)^2}{1-d-w} \partial_\eta \right)\Big|_{\Gamma_1}, \qquad 
	\partial_{\nu_-}^w = \left( -(\partial_\xi w) \partial_\xi + \frac{1+(\partial_\xi w)^2}{w+d} \partial_\eta \right)\Big|_{\Gamma_1}.
\]
The dynamic condition takes the form
\be
\label{steady two-phase dynamic flat}
	\frac{\rho_0+\delta}{2} |\nabla^w u_+ +U \mathbf{e}_1|^2 - \frac{1}{2} |\nabla^w u_-+U\mathbf{e}_1|^2 + \frac{\rho_0 -1+\delta}{F^2} w = \frac{\rho_0+\delta-1}{2} U^2  \quad \textrm{on } \Gamma_1,
\ee
and, thanks to our normalizations, the upstream asymptotic condition simplifies to
\be
\label{steady two-phase upstream flat}
\left\{
\begin{aligned} 
	\lim_{\xi \to -\infty} \partial_\xi u_\pm(\xi,\placeholder) & = 0, \\
	 \lim_{\xi \to +\infty} \partial_\xi u_\pm(\xi,\placeholder) & = U_\pm^\dagger-U, 
\end{aligned} \right. 
\qquad 
\left\{
\begin{aligned} 
	  \lim_{\xi \to -\infty} w(\xi)&  = 0, \\
	   \lim_{\xi \to +\infty} w(\xi) &= d^\dagger - d.
\end{aligned} \right. 
\ee
\ese
Note that we have used the upstream limit~\eqref{steady two-phase upstream flat} to evaluate the Bernoulli constant $Q$ explicitly in~\eqref{steady two-phase dynamic flat}. Moreover, from the conjugate flow equations of Corollary~\ref{conjugate flow corollary} and Remark~\ref{conjugate flow remark}, we see that for all $\mu$ sufficiently small, the downstream velocities $U_\pm^\dagger$ are uniquely determined by the upstream depth $d$, downstream depth $d^\dagger$, Froude number $F$, Mach number $\mu$, and $\delta$, keeping in mind that we have already normalized the upstream velocity $U = -1$.

\subsection{Function spaces and the abstract operator equation}

The most challenging aspect of adapting our methodology to the case of fronts is finding an appropriate functional-analytic framework. Note, for instance, that $u$ will necessarily grow linearly in the downstream limit $\xi \to +\infty$. There is no avoiding this issue, as regardless of the normalization, one of the far-field velocity fields will necessarily be nonzero for a front. On the other hand, any derivative of $u$ will be uniformly Hölder continuous with well-defined (constant) limits upstream and downstream. 

For a cylindrical subset $\Sigma = \mathbb{R} \times \Sigma^\prime$ of $\overline{\confD}$, integer $k \geq 0$, and $\beta \in (0,1)$, define 
\begin{align*}
	C_\het^{k+\beta}(\overline{\Sigma}) \colonequals \Big\{ v \in C_\bdd^{k+\beta}(\overline{\Sigma})  : & ~ \lim_{\xi \to -\infty} \partial^\gamma v = 0 \\
	&   ~\lim_{\xi \to +\infty} \partial^\gamma v \textrm{ exists } \quad \textrm{ for all } 0 \leq |\gamma| \leq k \Big\},
\end{align*}
with all the above limits uniform in $\eta$. It is easy to see that $C_\het^{k+\beta}$ is a Banach space --- indeed, a Banach algebra --- under the standard Hölder $C^{k+\beta}$ norm on $\Sigma$. These spaces should be compared to the so-called $\Xspace_\infty$ spaces considered in~\cite[Section 2]{chen2023global}; they are a natural setting for studying fronts. Of course, the relative potential $u$ itself does not have a well-defined downstream limit, so we introduce the space
\begin{align*}
	\dot C_{\het}^{k+\beta}(\overline{\confD_\pm})  \colonequals \Big\{ v \in C_\loc^{k+\beta}(\overline{\confD_\pm})  & :  ~v(0,\pm 1) = 0,  ~\partial_\xi v,\, \partial_\eta v \in C_\het^{k+\beta-1}(\overline{\confD_\pm}) \Big\}.
\end{align*}
Notice that we are prescribing the values of the functions at a point, which has the effect of removing (nonzero) constants. The result is again a Banach space under the obvious norm
\[
	\| \placeholder \|_{\dot C_{\het}^{k+\beta}({\confD_\pm})} \colonequals  \| \nabla (\placeholder) \|_{C^{k-1+\beta}(\confD_\pm;\mathbb{R}^2)}. 
\]
Finally, we set
\be
\label{definition X space and Y space front}
\begin{aligned}
	\Xspace_{\het} & \colonequals \left\{ (u,w) \in \left( \dot C_\het^{2+\beta}(\overline{\confD_+}) \cap \dot C_\het^{2+\beta}(\overline{\confD_-}) \right) \times  C_\het^{2+\beta}(\Gamma_1): ~\partial_\eta u_\pm|_{\Gamma_0^\pm} = 0 \right\} \\
	& \equalscolon \Xspace_{1 \het} \times \Xspace_{2\het}  \\
	\Yspace_{\het} &\colonequals \left( C_{\het}^{\beta}(\overline{\confD_+}) \cap C_{\het}^{\beta}(\overline{\confD_-}) \right) \times C_{\het}^{1+\beta}(\Gamma_1) \times C_{\het}^{1+\beta}(\Gamma_1) \times C_{\het}^{1+\beta}(\Gamma_1) \\
	& \equalscolon \Yspace_{1\het} \times \Yspace_{2\het} \times \Yspace_{3\het} \times \Yspace_{4\het}.
\end{aligned}
\ee
Here and below, we write subscripts of $\pm$ to denote the restriction of a function with domain $\confD$ to $\confD_\pm$.

The solutions of the flattened system~\eqref{flattened equations} can then be understood as the zero-set of the nonlinear mapping 
\[
	(\F, \G) = (\F, \G_1, \G_2, \G_3) \colon \mathcal{O}_\het \subset \Xspace_\het \times \mathbb{R}^2 \to \Yspace_\het
\]
with the first component encoding the bulk equations
\begin{align*}
	& \F(u,w; \mu,\delta)  \\
	& \quad \colonequals \left\{
		\begin{aligned}
			\Delta^w u_+ -  (\rho_0+\delta) \mu  \mathfrak{D}_u^w  \left( \frac{1}{2}  |\nabla^w u_+ + U \mathbf{e}_1|^2 +  \frac{1}{F^2} ( (1-\eta)(w+d-1) +1 ) \right) &\quad \textrm{in } \confD_+ \\
			\Delta^w u_- - \mu  \mathfrak{D}_u^w  \left( \frac{1}{2}  |\nabla^w u_-+ U\mathbf{e}_1|^2 +  \frac{1}{F^2} (\eta+1)(w+d) \right) &\quad \textrm{in } \confD_-,
		\end{aligned}
	\right.
\end{align*}
the second and third giving the kinematic boundary condition on $\Gamma_1$, 
\[
	\G_1(u,w; \mu,\delta) \colonequals  \partial_{\nu_+}^w u_+\Big|_{\Gamma_1}, \qquad \G_2(u,w; \mu,\delta) \colonequals  \partial_{\nu_-}^w u_-\Big|_{\Gamma_1},
\]
and the fourth being the dynamic condition on the interface 
\begin{align*}
	 \G_3(u,w; \mu,\delta) &  \colonequals \Big(\frac{\rho_0+\delta}{2} |\nabla^w u_+ +U \mathbf{e}_1|^2 - \frac{1}{2} |\nabla^w u_-+U\mathbf{e}_1|^2  \\
	& \qquad + \frac{\rho_0 -1+\delta}{F^2} w - \frac{\rho_0+\delta-1}{2} U^2\Big)\Big|_{\Gamma_1}.
\end{align*}
\subsection{Stream function formulation of the incompressible system}
\label{stream function formulation section}

Here, we recall the treatment of the incompressible front problem in~\cite{chen2022center}. An entirely technical --- but far from small --- nuisance is that that paper formulates the  system in terms of the relative stream function, which is the harmonic conjugate of the relative potential. We will need to show that nondegeneracy in this formulation implies nondegeneracy in ours. While this is reasonable to expect, actually verifying it requires some effort.

First, we return to the physical coordinates. Let the relative stream function $\psi \colon \overline{\fluidD} \to \mathbb{R}$ be defined so that its restriction $\psi_\pm= \psi|_{\fluidD_\pm}$ is a harmonic conjugate of $\phi_\pm$:
\bse
\label{stream function problem}
\be
\label{stream function bulk equation}
	\Delta \psi_\pm = 0, \quad \left( -\partial_y \psi_\pm, \, \partial_x \psi_\pm \right) \equalscolon \nabla^\perp \psi_\pm = \nabla \phi_\pm \qquad \textrm{in } \fluidD_\pm.
\ee
The kinematic conditions becomes the requirement that $\psi$ is constant on $\fluidS$ and $\fluidB_\pm$. For definiteness, say 
\be
\label{stream function kinematic condition}
\psi_\pm = 0 \quad \textrm{on } \fluidS, \qquad \psi_\pm = m_\pm \qquad \textrm{on } \fluidB_\pm,
\ee
with $m_+$ and $m_-$ being the (constant) volumetric mass flux through upper and lower layers. This choice ensures the kinematic condition is satisfied and selects a unique harmonic conjugate. The Bernoulli condition is simply
\be
\label{stream function dynamic condition}
	\frac{\rho_0}{2} |\nabla \psi_+|^2 - \frac{1}{2} |\nabla \psi_-|^2 + \frac{\rho_0 -1}{F^2} y = Q \qquad \textrm{on } \fluidS.
\ee
Unlike $\phi$, the relative stream function does not grow linearly but rather has the upstream and downstream limits
\be
\label{stream function far-field condition}
	\lim_{x\to-\infty} \psi_\pm(x,y) = -U(y-d), \qquad \lim_{x \to +\infty} \psi_\pm(x,y) =  -U_\pm^\dagger (y-d^\dagger).
\ee
\ese
Note that this means $m_- = Ud$ and $m_+ = -U(1-d)$.

The existence theory for front-type solutions of the incompressible problem~\eqref{stream function problem} begins with the work of Amick and Turner~\cite{amick1989small}, who constructed small-amplitude fronts for which $d \approx d^\dagger$ and $h$ is nearly constant. Through varying methods, similar solutions were found by Mielke~\cite{mielke1995homoclinic}, Makarenko~\cite{makarenko1992bore,makarenko1999conjugate}, and Chen, Walsh, and Wheeler~\cite{chen2022center}. More recently, families of large-amplitude fronts were constructed via global bifurcation theory in~\cite{chen2023global}, and later shown to include fronts arbitrarily close to overturning~\cite{chen2025extreme}. The main result of these two papers is summarized below.

\begin{theorem}[Large-amplitude incompressible fronts]
\label{large incompressible fronts theorem}
  Fix a $\beta \in (0,1)$ and density ratio $\rho_0 \in (0,1)$.  There exist a global curve $\km_\het$ of solutions to the incompressible internal wave problem \eqref{stream function problem} that admits the global $C^0$ parameterization
  \[
   	\km_\het = \left\{ \left(\psi(s), h(s), d(s) \right) : s \in \mathbb{R} \right\}
\]
with $(\psi(0), h(0), d(0)) = (0,0,1/(1+\sqrt{\rho_0}))$. 
  At each parameter value $s$, the Froude number and downstream depth and velocities are given by
  \be
  \label{incompressible conjugate flow relations}
  	F^2  = \frac{1-\sqrt{\rho_0}}{1+\sqrt{\rho_0}}, \qquad d^\dagger = \frac{1}{1+\sqrt{\rho_0}}, \qquad U_\pm^\dagger = U_\pm^\dagger(s) = U \frac{d_\pm(s)}{d_\pm^\dagger},
\ee
where $d_-(s) \colonequals d(s)$, $d_+(s) \colonequals 1-d(s)$, $d_-^\dagger \colonequals d^\dagger$, and $d_+^\dagger \colonequals 1 - d^\dagger$,
and
  \[ 
    \psi(s) \in C_\bdd^{2+\beta}(\overline{\fluidD_+(s)}) \cap C_\bdd^{2+\beta}(\overline{\fluidD_-(s)}) \cap C_\bdd^0(\overline{\fluidD(s)}), \qquad h(s) \in C_\bdd^{2+\beta}(\mathbb{R}), 
  \]
  where $\fluidD(s)$ denotes the fluid domain corresponding to $h(s)$ and $d(s)$.  Moreover, $h(s)$ is locally real analytic, and $\psi_\pm(s)$ admits an analytic extension to $\overline{\fluidD_\pm(s)}$.
  \begin{enumerate}[label=\rm(\alph*)]
  \item \label{global bore monotone part} \textup{(Strict monotonicity)} Each solution on $\km_\het$ is a strictly monotone front:
    \begin{equation}
      \begin{aligned}
        (\signum{s}) \partial_x h(s) &< 0 & \qquad &  \textrm{on } \mathbb{R}, \\
        (\signum{s}) \partial_x \psi_\pm(s) & < 0 & \qquad & \textrm{in } \overline{\fluidD_\pm(s)} \setminus \fluidB_\pm , \\
        \partial_y \psi_\pm(s) & < 0 & \qquad & \textrm{in } \overline{\fluidD_\pm(s)}.
      \end{aligned} \label{monotonicity Euler variables}
    \end{equation}
  \item \textup{(Limits)} \label{front limits part} In the limit along the elevation front portion of $\km_\het$, overturning occurs in that 
    \begin{equation}
      \limsup_{s \to -\infty} \| \partial_x h(s) \|_{L^\infty(\mathbb{R})} = \infty. \label{overturning} 
    \end{equation}
	 In the limit along depression front portion of $\km_\het$, either \emph{overturning} occurs or the interface comes arbitrarily close to the upper wall: 
  \begin{equation}
    \limsup_{s \to \infty} \| \partial_x h(s) \|_{L^\infty(\mathbb{R})} = \infty \quad \textrm{or} \quad \lim_{s \to \infty} d(s) = 1. \label{depression limit alternatives} 
  \end{equation}
  	\item \label{front discrete set part} There exist a discrete set $S \subset \mathbb{R}$ such that $(\psi(s), h(s), d(s))$ is nondegenerate for $s \not\in S$. 
  \end{enumerate}
\end{theorem}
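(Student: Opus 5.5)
This theorem is a summary of results established in \cite{chen2023global} and \cite{chen2025extreme}, so the plan is to recall how those arguments go rather than redo them. Work in the stream-function formulation~\eqref{stream function problem} and flatten each layer by a vertical rescaling, in the spirit of~\eqref{definition flattening transform}. This yields an abstract operator equation on spaces of $C^{2+\beta}_\bdd$ functions with upstream and downstream limits, which is the $\Xspace_\infty$-type framework mirrored by our $C_\het^{k+\beta}$ spaces. The conjugate flow analysis comes first. In the incompressible limit of Corollary~\ref{conjugate flow corollary} (Remark~\ref{conjugate flow remark}), mass conservation gives $U_\pm^\dagger = U d_\pm/d_\pm^\dagger$. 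Energy and flow force then reduce to polynomial relations in $(d,d^\dagger,F,\rho_0)$. Eliminating the front-specific unknowns shows that a family with $d\neq d^\dagger$ exists only when $F^2=(1-\sqrt{\rho_0})/(1+\sqrt{\rho_0})$ and $d^\dagger = 1/(1+\sqrt{\rho_0})$. With these fixed, $d$ becomes the free parameter, which gives~\eqref{incompressible conjugate flow relations}.

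Next, construct the small-amplitude fronts near $d = 1/(1+\sqrt{\rho_0})$ by a center manifold reduction, as in \cite{chen2022center}. The reduced ODE is planar Hamiltonian, and its heteroclinic orbits between conjugate equilibria give monotone fronts. As in the proof of \cite[Theorem 4.1]{wheeler2013solitary}, one checks that these are nondegenerate: the linearized operator on the front is invertible once translation is removed, using $d$ as the parameter. Apply Theorem~\ref{homoclinic global ift} in both directions $s\to\pm\infty$ to get $\km_\het$. Parts (c) and (d) of that theorem give the local real-analyticity. Analyticity of $\psi_\pm$ up to the boundary follows from elliptic regularity for analytic free boundary problems.

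Strict monotonicity (part~\ref{global bore monotone part}) is proved by a continuation argument. The derivatives $\partial_x\psi_\pm$ and $\partial_x h$ solve the linearized problem. Maximum principle arguments that account for the transmission structure across $\fluidS$, together with Hopf-type lemmas and careful treatment of the far-field limits, show that the set of $s$ where~\eqref{monotonicity Euler variables} holds is open and closed along the curve. Monotonicity also excludes the closed loop alternative, since the curve cannot return to the trivial state with the opposite sign.

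The limit alternatives in part~\ref{front limits part} are the hardest step, carried out in \cite{chen2023global,chen2025extreme}. Loss of compactness on the unbounded strip has to be ruled out. This uses monotonicity, which prevents mass from escaping to infinity and so makes translated limits conjugate flows, combined with the classification of conjugate states. Fredholmness is obtained from the invertibility of the limiting operators at the two far-field states. Blowup is then reduced to $\|\partial_x h\|_{L^\infty}\to\infty$ or $d\to1$ by Schauder bootstrapping under a slope bound and an absence of stagnation argument. For elevation fronts, $d\to 1$ is excluded by the conjugate-flow constraints. Finally, part~\ref{front discrete set part} follows from the structure of distinguished arcs: the degenerate points of $\km_\het$ form a discrete set.
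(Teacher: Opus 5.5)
Your overall route matches the paper's, which simply cites \cite{chen2023global,chen2025extreme}, Amick--Turner, and the distinguished-arc structure behind Theorem~\ref{homoclinic global ift}. Your conjugate-flow elimination is also correct: with $U=-1$ and $d\neq d^\dagger$, energy and flow force factor to $\rho_0 (d^\dagger)^2=(1-d^\dagger)^2$ and $(1-\rho_0)/F^2=1/d^\dagger+\rho_0/(1-d^\dagger)$, and impose no condition on $d$.

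The step that fails is your treatment of part~\ref{front limits part} on the elevation branch. For $s<0$, monotonicity gives $d(s)=\lim_{x\to-\infty}h(s)<\lim_{x\to+\infty}h(s)=d^\dagger=1/(1+\sqrt{\rho_0})<1$. So ``excluding $d\to1$'' there is vacuous and says nothing about overturning. The natural non-overturning degeneration on this branch is instead its counterpart at the bed, $d(s)\to 0$, with $U_-^\dagger=Ud/d^\dagger\to0$ downstream. Conjugate-flow considerations cannot exclude this: by the relations above, once $F$ and $d^\dagger$ take their critical values, every $d\in(0,1)$ is conjugate to $d^\dagger$. The continuation argument of \cite{chen2023global} alone does not settle the elevation branch. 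That overturning necessarily occurs as $s\to-\infty$ is the separate, later result \cite[Theorem~1.1]{chen2025extreme}, which is what the paper invokes. Your sketch has to cite it (or reproduce its argument) at this point.

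There are two smaller misreadings.

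First, ``$h(s)$ is locally real analytic'' concerns the interface profile as a function of $x$, not the parametrization in $s$. So parts (c) and (d) of Theorem~\ref{homoclinic global ift} are not the source. The paper obtains it from the purely local regularity argument of \cite[Theorem~2.1]{amick1986global} for the two-fluid problem, adapted to fronts. Your appeal to analytic free-boundary regularity would also give this if applied to the interface, but as written you attribute it to the wrong result.

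Second, the trivial critical state at $s=0$ is not a point where the linearization is Fredholm; this is why the small fronts come from a center manifold. So a single application of Theorem~\ref{homoclinic global ift} cannot produce a curve running through it out to both $s\to\pm\infty$. Instead, one continues the elevation and depression halves of the local curve separately, as in \cite{chen2023global}, and joins them at $s=0$.
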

\begin{proof}
The existence of $\km_\het$ is proved in \cite[Theorem 5.1]{chen2023global}. Real analyticity of the free surface profile $h(s)$ in the case of solitary internal waves was shown by Amick and Turner~\cite[Theorem 2.1]{amick1986global}, but their argument is entirely local and holds for fronts mutatis mutandis. The assertion in part~\ref{front limits part} that overturning necessarily occurs as $s \to -\infty$ was shown later in~\cite[Theorem 1.1]{chen2025extreme}. Finally, part~\ref{front discrete set part} comes, as usual, from the analytic global bifurcation theoretic machinery used in the construction of $\km_\het$; see the discussion following Theorem~\ref{homoclinic global ift}.
\end{proof}

Analogously to~\eqref{definition upm}, we define the corresponding unknown for the flattened problem to be
\be
\label{definition tilde u}
	\tilde u_\pm \circ f^{-1} = \psi_\pm + U( y-d),
\ee
which now measure the deviation of the relative stream function from its upstream limiting state. Note that $\tilde u$ vanishes on the rigid boundaries $\Gamma_0^\pm$, and, while it is continuous across $\Gamma_1$, it is generally not constant there. The natural function space setting is then 
\be
\label{definition tilde spaces}
\begin{aligned}
	\tilde{\Xspace}_\het & \colonequals \left\{ v \in C_\het^{2+\beta}(\overline{\confD_+}) \cap C_\het^{2+\beta}(\overline{\confD_-}) : ~ v|_{\Gamma_0^\pm} =0   \right\} \times \Xspace_{2\het} \equalscolon \tilde{\Xspace}_{1\het} \times \Xspace_{2\het} \\
	\tilde{\Yspace}_\het & \colonequals \Yspace_\het
\end{aligned}
\ee
and the abstract operator formulation becomes
\[
	(\tilde{\F}, \tilde{\G})(\tilde u, w; d) = 0
\]
for $(\tilde \F, \tilde\G_1, \tilde \G_2, \tilde \G_3) \colon \tilde{\mathcal{O}}_\het \subset \tilde{\Xspace}_\het \times (0,1) \to \tilde{\Yspace}_\het$ defined by
\be
\label{tilde F G definition}
\begin{aligned}
	\tilde{\F}(\tilde u, w; d) & \colonequals \Delta^w \tilde u \\
	\tilde{\G}_1(\tilde u,w; d) & \colonequals \partial_\xi \tilde u_+\Big|_{\Gamma_1} \\
	\tilde{\G}_2(\tilde u,w; d) & \colonequals \partial_\xi \tilde u_-\Big|_{\Gamma_1} \\
	\tilde{\G}_3(\tilde u,w; d) & \colonequals \left(\frac{\rho_0}{2} |\nabla^w \tilde u_+ -U \mathbf{e}_2|^2 - \frac{1}{2} |\nabla^w \tilde u_- - U\mathbf{e}_2|^2  + \frac{\rho_0 -1}{F^2} w - \frac{\rho_0-1}{2} U^2  \right)\Big|_{\Gamma_1}.
\end{aligned}
\ee
Here, $\tilde{\mathcal{O}}_{\het}$ is the open subset of $\tilde{\Xspace}_\het \times (0,1)$ derived in the obvious way from $\mathcal{O}_\het$. For consistency with the existence theory in \cite{chen2022center,chen2023global}, we are treating the upstream depth $d$ as the only varying parameter. From the conjugate flow equations in Corollary~\ref{conjugate flow corollary}, it can be shown that $d$ and $U$ determine $d^\dagger$ and $U_\pm^\dagger$ uniquely as in~\eqref{incompressible conjugate flow relations}.

\subsection{Formulation independence of nondegeneracy}

The central question addressed in this section is whether nondegeneracy of a solution to the incompressible problem implies nondegeneracy of solutions to the compressible problem at $\mu = 0$. One of course expects this to be true, but the fact that the two are expressed in terms of different quantities --- the stream function in the former case and the potential in the latter --- adds a major wrinkle. To transition between the stream function formulation and the velocity potential formulation amounts to taking the harmonic conjugate. 

With that in mind, for a fixed $w \in \Xspace_{2\het}$, consider the mapping
\[
	\mathcal{C}^w  \colon \Xspace_{1\het}  \to C_\bdd^{2+\beta}(\overline{\confD_+}) \cap C_\bdd^{2+\beta}(\overline{\confD_-})   
\]
whose restriction to $\confD_\pm$ is given by
\be
\label{definition Cw}
\begin{aligned}
	(\mathcal{C}^w u)_\pm(\xi,\eta)  & \colonequals -\int_{\pm 1}^\eta \left( \partial_\eta f_{2\pm}(\xi,\eta^\prime) \partial_\xi ( u_\pm (\xi,\eta^\prime)+ U\xi) - \partial_\xi f_{2\pm}(\xi,\eta^\prime) \partial_\eta ( u_\pm(\xi,\eta^\prime) + U\xi ) \right)  \, d\eta^\prime \\
		& \qquad + U( f_{2\pm}(\xi,\eta)-f_{2\pm}(\xi,\pm 1)).
\end{aligned}
\ee

Here, and in what follows, the dependence of $f$ on $w$ has been suppressed to improve readability. 

The motivation behind the formula~\eqref{definition Cw} is best seen when it is expressed in physical coordinates:
\[
	\begin{aligned}
		((\mathcal{C}^w u)_+ \circ f^{-1})(x,y) & =  -\int_{1}^y \left( \partial_x \phi_+(x,y^\prime) - U \right) \, dy^\prime & \\
		 ((\mathcal{C}^w u)_- \circ f^{-1})(x,y) & =  -\int_{0}^y \left(\partial_x \phi_-(x,y^\prime) - U \right)\, dy^\prime.
	 \end{aligned}
\]
In the incompressible case where $\phi_\pm$ is harmonic and its normal derivative vanishes on the interface, the integral on the right-hand side of~\eqref{definition Cw} is  the harmonic conjugate of $\phi_\pm$ pulled back to the flattened coordinates. The second term represents the limiting stream function upstream. In view of the definition of $\tilde{u}$ in~\eqref{definition tilde u}, we hope that when $(u,w)$ is a solution to the incompressible problem~\eqref{flattened equations}, then $(\mathcal{C}^w u,w)$ solves the stream function formulation of the problem.

It is easily verified that when $(u,w)$ represents an incompressible front, then $\mathcal{C}^w u$ vanishes on $\Gamma_0^\pm$, thanks to the boundary conditions satisfied by $u$ and the conjugate harmonic relations. Observe, moreover, that for any $(u,w)$, $\mathcal{C}^w u$ and its derivatives have well-defined upstream and downstream limits, and, in particular,
\[
	\lim_{\xi \to -\infty} (\mathcal{C}^w u)(\xi,\eta) = 0.
\]
As a consequence, we have that $\mathcal{C}^w u \in \tilde{\Xspace}_{1\het}$ when $(u,w) \in (\F,\G)(\placeholder, \placeholder; 0,0)^{-1}(0)$.

There is likewise an inverse mapping that converts the stream function formulation to the relative potential formulation when $\mu = 0$. It is found by writing the harmonic conjugate of the stream function in the flattened variables, then subtracting the appropriate function to have the desired upstream limit. For any $w \in \tilde{\Xspace}_{2\het}$, let  $\tilde{\mathcal{C}}^w \colon \tilde{\Xspace}_{1\het} \to \Xspace_{1\het}$ be the mapping whose restriction to $\confD_\pm$ is given by 
\be
\label{definition tilde Cw}
\begin{aligned}
	(\tilde{\mathcal{C}}^w \tilde u)_\pm(\xi,\eta)  & \colonequals   \int_{\pm1}^\eta \left( \partial_\eta f_{2\pm}(\xi,\eta^\prime) \partial_\xi \tilde u_\pm(\xi,\eta^\prime) - \partial_\xi f_{2\pm}(\xi,\eta^\prime) \partial_\eta \tilde u_\pm(\xi,\eta^\prime) \right) \, d\eta^\prime \\
	& \qquad -\int_0^\xi \frac{\partial_\eta \tilde u_\pm(\xi^\prime, \pm 1)}{\partial_\eta f_{2\pm}(\xi^\prime, \pm 1)} \, d\xi^\prime.
\end{aligned}
\ee

Again, it is best to understand the meaning of this function in the physical variables, where we see 
\[
	(\tilde{\mathcal{C}}^w \tilde u)_\pm \circ f^{-1}(x,y) + U x = -\int_{0}^x (\partial_y \psi_{\pm})(x^\prime, y_0^\pm) \, dx^\prime + \int_{y_0^\pm}^y (\partial_x \psi_{\pm} )(x,y^\prime)  \, dy^\prime,
\]
where $y_0^+ \colonequals 1$ and $y_0^- \colonequals 0$ represent the $y$-coordinate of the corresponding component of $\Gamma_0$.
When $\psi_\pm \circ f^{-1}$ is harmonic, the right-hand side is indeed the unique harmonic conjugate normalized so that it vanishes at the point $(0,y_0^\pm)$. We summarize the above observation in the next lemma.

 \begin{lemma}[Correspondence]
 \label{correspondence lemma}
For any $w \in \Xspace_{2\het}$, the mappings $\mathcal{C}^w$ and $\tilde{\mathcal{C}}^w$ given by~\eqref{definition Cw} and \eqref{definition tilde Cw}, respectively, are real-analytic and satisfy the following.
\begin{enumerate}[label=\rm(\alph*)]
	\item If $(u,w) \in \Xspace_\het$ and $\Delta^w u = 0$ in $\confD_+ \cup \confD_-$, then $(\mathcal{C}^w u,w) \in \tilde{\Xspace}_{\het}$ and 
	\[
		\Delta^w \mathcal{C}^w u = 0, \quad \nabla^w u = \begin{pmatrix} 0 & -1 \\ 1 & 0 \end{pmatrix} \nabla^w \mathcal{C}^w u \qquad \textrm{ in } \confD_+ \cup \confD_-.
	\] 
	Conversely, if $(\tilde u,w) \in \tilde{\Xspace}_{\het}$ and $\Delta^w \tilde u =0$ in $\confD_+ \cup \confD_-$, then $(\tilde{\mathcal{C}}^w \tilde u, w) \in \Xspace_\het$, and 
	\[
		\Delta^w \tilde{\mathcal{C}}^w \tilde u = 0, \quad \nabla^w \tilde{\mathcal{C}}^w \tilde u = \begin{pmatrix} 0 & -1 \\ 1 & 0 \end{pmatrix} \nabla^w \tilde u \qquad \textrm{ in } \confD_+ \cup \confD_-.
	\] 
 	\item They are inverses in the sense that
 	\be
 	\label{tilde u tilde v inverses}
 	\begin{aligned}
 		\tilde{\mathcal{C}}^w \mathcal{C}^w u & = u & \qquad & \textrm{for all } (u,w) \in \Xspace_\het, ~ \Delta^w u = 0 \textrm{ on } \confD_+ \cup \confD_- \\
		{\mathcal{C}}^w \tilde{\mathcal{C}}^w \tilde u & = \tilde u & \qquad & \textrm{for all } (\tilde u,w) \in \tilde{\Xspace}_\het, ~ \Delta^w \tilde u =0 \textrm{ on } \confD_+ \cup \confD_-.
	\end{aligned}
	 \ee 
	 
	 \item We have that 
	 \[
	 	(\F,\G)(u,w;0,0) = 0 \qquad \textrm{if and only if} \qquad (\tilde{\F},\tilde{\G})(\mathcal{C}^w u, w; d) = 0.
	 \]
\end{enumerate}
 \end{lemma}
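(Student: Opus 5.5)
The plan is to transfer every statement to the physical coordinates through the flattening map $f$, verify it there using elementary properties of harmonic conjugates, and then pull the conclusions back to the flattened variables. Real analyticity comes first and is routine. For fixed $w$, the maps $\mathcal{C}^w$ and $\tilde{\mathcal{C}}^w$ are linear in $u$ and $\tilde u$, while the coefficients $\partial_\eta f_{2\pm}$, $\partial_\xi f_{2\pm}$ and $1/\partial_\eta f_{2\pm}$ depend polynomially or rationally on $(w,\partial_\xi w)$. The integrals in $\eta'$ and $\xi'$ are bounded linear operators on the relevant Hölder spaces, so analyticity follows from standard composition results. For the $\xi'$-integral in $\tilde{\mathcal{C}}^w$, we only need its gradient to lie in $C_\het$, because the $\dot C_\het$ norm involves only derivatives.

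For part (a), we use the physical-variable formulas displayed above. If $\Delta^w u = 0$, then $\phi_\pm = u_\pm\circ f^{-1} + Ux$ is harmonic. The function $\psi_\pm(x,y) = -\int_{y_0^\pm}^y \partial_x\phi_\pm\,dy'$ satisfies $\partial_y\psi_\pm = -\partial_x\phi_\pm$ by construction. We also have
\[
	\partial_x\psi_\pm = -\int_{y_0^\pm}^y \partial_x^2\phi_\pm \, dy' = \int_{y_0^\pm}^y \partial_y^2\phi_\pm \, dy' = \partial_y\phi_\pm(x,y) - \partial_y\phi_\pm(x,y_0^\pm) = \partial_y\phi_\pm,
\]
where the last step uses the kinematic condition $\partial_\eta u_\pm = 0$ on $\Gamma_0^\pm$. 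So $\nabla\phi_\pm = \nabla^\perp\psi_\pm$, and therefore $\psi_\pm$ is harmonic. Pulling back, and noting that $\nabla^w$ is exactly the pulled-back physical gradient and that the added term $U(f_2 - f_2(\cdot,\pm1))$ is affine in $y$, gives the stated relation $\nabla^w u = J\nabla^w \mathcal{C}^w u$ with $J$ the rotation matrix. It remains to check membership in $\tilde{\Xspace}_{1\het}$. Vanishing on $\Gamma_0^\pm$ is immediate because the integral runs from $\pm1$. The limits at $\pm\infty$ follow because the integrand is built from $\nabla u$ and $w$, which have limits, and in the upstream limit the integrand tends to $0$. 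The converse direction is analogous: the $\eta'$-integral gives $\partial_y$ correctly, and differentiating in $x$ gives $\partial_x$ after using harmonicity of $\psi$. The $\xi'$-integral corrects the boundary value $\partial_x\phi(x,y_0^\pm) = -\partial_y\psi(x,y_0^\pm)$, after noting that $\partial_\eta\tilde u/\partial_\eta f_2 = \partial_y\psi + U$ on $\Gamma_0^\pm$. The normalization $v(0,\pm1) = 0$ holds by construction. The condition $\partial_\eta(\tilde{\mathcal{C}}^w\tilde u)_\pm = 0$ on $\Gamma_0^\pm$ follows from $\partial_y\phi = \partial_x\psi = 0$ there, since $\tilde u = 0$ on $\Gamma_0^\pm$.

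Part (b) follows from (a) together with uniqueness of harmonic conjugates up to constants. The function $\tilde{\mathcal{C}}^w\mathcal{C}^w u$ has the same gradient as $u$: rotating twice by $J$ gives $-I$, and the sign conventions in the two definitions compensate for this. Both functions vanish at $(0,\pm1)$, so they coincide on each connected component $\confD_\pm$. In the other order, $\mathcal{C}^w\tilde{\mathcal{C}}^w\tilde u$ and $\tilde u$ have equal gradients and both vanish on $\Gamma_0^\pm$, so they also agree.

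Part (c) is the substantive step. With $\mu = \delta = 0$, the equation $\F = 0$ reads $\Delta^w u = 0$, and by (a) this is equivalent to $\Delta^w \mathcal{C}^w u = 0$. The kinematic conditions $\G_1 = \G_2 = 0$ say that the normal derivative of $\phi_\pm$ vanishes on $\fluidS$. This is equivalent to $\psi_\pm$ having vanishing tangential derivative along $\fluidS$, that is, $\partial_x[\psi_\pm(x,h(x))] = 0$, which in flattened variables is $\partial_\xi \tilde u_\pm|_{\Gamma_1} = 0$ up to the affine correction. We need to check this correction: $\partial_\xi$ of $U(f_2(\xi,0) - d) = Uw$ is $U\partial_\xi w$. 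I expect the main obstacle to be precisely this bookkeeping, together with the need for $\tilde u$ to be constant and equal to $0$ on $\Gamma_1$. I would handle it by verifying directly that $(\mathcal{C}^w u)_\pm(\xi,0) + U(\cdots)$ equals $\psi_\pm(x,h(x))$, and that it is constant in $\xi$ with upstream limit $0$. Finally, because $|\nabla^w\tilde u_\pm - U\mathbf{e}_2| = |\nabla\psi_\pm| = |\nabla\phi_\pm| = |\nabla^w u_\pm + U\mathbf{e}_1|$, the Bernoulli components $\G_3$ and $\tilde{\G}_3$ coincide. The reverse implication uses (b) to write $u = \tilde{\mathcal{C}}^w\mathcal{C}^w u$ and runs the same equivalences backward.
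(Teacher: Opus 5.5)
Your route through physical coordinates is the one the paper has in mind. The paper records the lemma as a summary of the displayed physical-variable formulas and gives no separate proof. Your arguments for (a) and (b) are fine on the harmonic set.

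The genuine gap is the reverse implication in (c). You say $\Delta^w u=0$ is ``equivalent'' to $\Delta^w\mathcal C^w u=0$ by (a), and later use (b) to write $u=\tilde{\mathcal C}^w\mathcal C^w u$. But (a) only goes one way, and (b) presupposes $\Delta^w u=0$, which is exactly what must be shown, so the step is circular. It also cannot be repaired as stated. Since $(\mathcal C^w u)\circ f^{-1}=-\int_{y_0^\pm}^y(\partial_x\phi-U)\,dy'$ sees only $\partial_x\phi$, the map $\mathcal C^w$ has a kernel. For a front with, say, $d<d^\dagger$, fix $0<a<d^\dagger-d$ and take $g$ smooth with $g=0$ on $[0,d+a]$ and $g''\neq0$ somewhere in $(d+a,d^\dagger)$. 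Let $u_1\colonequals g\circ f_{2-}$ on $\confD_-$ and $u_1\colonequals0$ on $\confD_+$. Then $u_1\in\Xspace_{1\het}$. The integrand $\partial_\eta f_2\,\partial_\xi u_1-\partial_\xi f_2\,\partial_\eta u_1$ vanishes identically, so $\mathcal C^w u_1=0$. Yet $\Delta^w u_1=g''\circ f_{2-}\not\equiv0$. So if $(u_0,w)$ is an incompressible front, $(u_0+u_1,w)$ satisfies the right-hand side of (c) but not the left. What is true, and what Lemma~\ref{nondegeneracy front lemma} actually uses, is (c) with $\Delta^w u=0$ added on the right. Equivalently, $(\tilde\F,\tilde\G)(\tilde u,w;d)=0$ implies $(\F,\G)(\tilde{\mathcal C}^w\tilde u,w;0,0)=0$. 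This follows from the converse half of (a), then (b) applied to $\tilde u$, then the boundary identities below.

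Second, the kinematic step is left unresolved, and the residual $U\partial_\xi w$ you found comes from pairing two different readings of the boundary operators. As defined, $\G_{1,2}=\partial^w_{\nu_\pm}u_\pm$ is the normal derivative of $u\circ f^{-1}=\phi-Ux$, not of $\phi$. With $a\colonequals\nabla^w u$ (on the respective side), $\partial^w_{\nu_\pm}u_\pm=\pm(\partial_\xi w\,a_1-a_2)$ on $\Gamma_1$. The conjugate relation from (a) together with $\nabla=\mathcal J^T\nabla^w$ gives $\partial_\xi(\mathcal C^w u)_\pm=a_2-\partial_\xi w\,a_1$ there. Hence, pointwise and with no affine term,
\begin{itemize}
\item $\tilde\G_1(\mathcal C^w u,w)=-\G_1(u,w)$ and $\tilde\G_2(\mathcal C^w u,w)=\G_2(u,w)$;
\item on the zero set, $\mathcal C^w u=0$ on $\Gamma_1$, so your target ``$\tilde u$ constant and $0$ on $\Gamma_1$'' is right for the definitions as written.
\end{itemize}
If instead you impose the physical condition $\partial^w_{\nu_\pm}(u_\pm+U\xi)=0$, the same identity gives $\mathcal C^w u=Uw$ on $\Gamma_1$. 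The matching stream-function condition is then $\partial_\xi(\tilde u_\pm-Uw)|_{\Gamma_1}=0$, consistent with $\tilde u\circ f^{-1}=\psi+U(y-d)$ and $\psi|_{\fluidS}=0$. Either consistent pairing works. Mixing the physical $\G_{1,2}$ with $\tilde\G_{1,2}=\partial_\xi\tilde u_\pm|_{\Gamma_1}$ cannot.

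Finally, the ``routine'' analyticity claim is overstated: the $\eta'$-antiderivative gains no $\xi$-regularity. For non-harmonic $u$, $\mathcal C^w u$ is in general only $C^{1+\beta}$, and similarly for $\tilde{\mathcal C}^w$. For example, take $w=0$, $u_+=0$, and $u_-=\chi(\xi)$ with $\chi\in C^{2+\beta}\setminus C^3$, $\chi(0)=0$, $\chi'$ compactly supported; then $\mathcal C^w u_-=-d(1+\eta)\chi'(\xi)$. So the $C^{2+\beta}$ regularity and second-derivative limits needed for membership in (a) must come from the conjugate relation, not the integral formula. Analyticity, which here means boundedness of linear maps, holds on the closed subspaces $\ker\Delta^w$, not from all of $\Xspace_{1\het}$ into $C^{2+\beta}_\bdd$.
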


In order to actually relate the linearized problems for the stream function and potential formulations, we make use of the ``good unknowns'' of Alinhac~\cite{alinhac1986paracomposition}, whose importance to the time-dependent water wave problem is well-known. For steady water waves, to the best of our knowledge, they were first used by Wahlén~\cite{wahlen2009critical}, though they were originally called the ``$\mathcal{T}$-isomorphism'' with the connection to Alinhac being realized later. 

The basic idea is as follows. To compute the linearized system, we can imagine taking a one-parameter family $(u(s), w(s))$ with $(u(0), w(0)) = (u,w)$ being the base point for the analysis. Then we may write
\[
	u(s) = \phi(\placeholder; s) \circ f(s) - U \xi,
\]
where $f(s)$ is the flattening transform determined from $w(s)$ via~\eqref{definition flattening transform} and $\phi = \phi(x,y; s)$ is the corresponding relative velocity potential in the physical coordinates. Denoting with a dot the derivative of a quantity in $s$ evaluated at $s =0$, we compute that 
\[
	\dot u = \dot \phi \circ f + (D_{(x,y)}\phi \circ f) \dot f = 
	\left\{
	\begin{aligned}
		\dot \phi \circ f + (1-\eta)\dot w (\partial_y \phi) \circ f & \qquad  \textrm{in } \confD_+ \\
		\dot \phi \circ f + (1+\eta)\dot w (\partial_y \phi) \circ f & \qquad  \textrm{in } \confD_-.
	\end{aligned}
	\right.
\]
Rearranging this identity and pulling it back to the flattened coordinates gives
\[
	\dot \phi_\pm \circ f = \dot u_\pm - \frac{(1\mp \eta)\partial_\eta u}{\partial_\eta f_{2\pm}}\dot w. 
\]

The point here is that the left-hand side is the naïve variation of the potential in the physical coordinates. It is a good unknown in the sense that $\Delta^w \dot \phi \circ f= 0$ in the incompressible case. Working with this quantity is considerably simpler, as it allows us to disentangle the variations of the potential from those of the coordinate change.  Wahlén~\cite{wahlen2009critical} used a version of this to compute the linearization of a one-fluid water wave problem with vorticity at the trivial solution; Varholm~\cite{varholm2020global} extended this idea to linearization at arbitrary solutions. The lemma below adapts their results to the systems~\eqref{flattened equations} and \eqref{tilde F G definition}. 

\begin{lemma}[Good unknowns] 
\label{good unknowns lemma}
Suppose that 
\[
	(u,w) \in (\F,\G)(\placeholder; 0,0)^{-1}(0), \qquad (\tilde u, w) \in (\tilde\F,\tilde\G)(\placeholder;d)^{-1}(0)
\]
are incompressible fronts in the relative potential and relative stream function formulation, respectively. 
\begin{enumerate}[label=\rm(\alph*)]
	\item \label{good unknown isomorphism part} The linear mapping $\mathcal{T}(u,w) \colon \Xspace_{\het} \to \Xspace_\het$ whose restriction to $\confD_\pm$ is given by
	\[
		\mathcal{T}_\pm(u,w) \begin{pmatrix} \dot v \\ \dot w \end{pmatrix}  \colonequals \begin{pmatrix} \dot v + \frac{ (1\mp \eta) (\partial_\eta u)}{\partial_\eta f_{2\pm}}  \dot w \\  \dot w \end{pmatrix}
	\]
	is an isomorphism. The same is true for the corresponding linear mapping $\tilde{\mathcal{T}}(\tilde u, w) \colon \tilde{\Xspace}_\het \to \tilde{\Xspace}_\het$ whose restriction to $\confD_\pm$ is given by
	\[
		\tilde{\mathcal{T}}_\pm(\tilde u,w) \begin{pmatrix} \dot{\tilde{v}} \\ \dot w \end{pmatrix} \colonequals \begin{pmatrix} \dot{\tilde v} + \frac{ (1\mp \eta) (\partial_\eta \tilde u)}{\partial_\eta f_{2\pm}}  \dot w \\   \dot w\end{pmatrix}.
	\]
	\item 
	\label{equivalence of linearizations part} For all $(\dot v,\dot w) \in \Xspace_\het$ with $\Delta^w \dot v = 0$ in $\confD$, we have the identity
	 \[
 		D_{(\tilde u,w)} ( \tilde{\F},\tilde{\G})(\mathcal{C}^w u,w;d)  \tilde{\mathcal{T}}(\mathcal{C}^w u,w) \begin{pmatrix} \mathcal{C}^w \dot v \\ \dot w \end{pmatrix}  = D_{(u,w)}( {\F},{\G})(u,w;0,0)  \mathcal{T}(u,w) \begin{pmatrix} \dot v \\ \dot w \end{pmatrix}. 
 	\]
	In particular,
	\begin{align*}
		D_{(u,w)} \F(u,w;0,0) \mathcal{T}(u,w)\begin{pmatrix} \dot v \\ \dot w \end{pmatrix} & = \Delta^w \dot v \\
		D_{(\tilde u,w)} \tilde\F(\tilde u,w;d) \tilde{\mathcal{T}}(\tilde u,w)\begin{pmatrix} \dot{\tilde v} \\ \dot w \end{pmatrix} & = \Delta^w \dot{\tilde v}
	\end{align*}
\end{enumerate}
\end{lemma}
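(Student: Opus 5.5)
The plan is to prove the two parts separately. Part~\ref{good unknown isomorphism part} is essentially algebraic. Part~\ref{equivalence of linearizations part} is a chain-rule computation in the physical variables, in the spirit of Wahl\'en~\cite{wahlen2009critical} and Varholm~\cite{varholm2020global}, followed by a transfer to the stream function side using Lemma~\ref{correspondence lemma}.

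For part~\ref{good unknown isomorphism part}, $\mathcal{T}(u,w)$ is block lower triangular with identity diagonal blocks. Its formal inverse is therefore
\[
	(\dot u,\dot w) \mapsto \Big(\dot u - \tfrac{(1\mp\eta)\partial_\eta u}{\partial_\eta f_{2\pm}}\dot w,\ \dot w\Big),
\]
and it suffices to check that the multiplication operator $M\dot w \colonequals (1\mp\eta)(\partial_\eta u/\partial_\eta f_{2\pm})\dot w$ maps $\Xspace_{2\het}$ boundedly into $\Xspace_{1\het}$. This has four parts.
\begin{enumerate}[label=\rm(\roman*)]
	\item \emph{Regularity.} Since $(u,w)$ is an incompressible front, the analyticity statement in Theorem~\ref{large incompressible fronts theorem}, or elliptic regularity for $\Delta^w u = 0$ with Neumann data, gives $\partial_\eta u \in C_\het^{2+\beta}$ on each $\overline{\confD_\pm}$. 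Also $1/\partial_\eta f_{2\pm} \in C_\het^{2+\beta}$, because $d+w$ and $1-d-w$ are bounded away from $0$ on $\mathcal{O}_\het$. As $C_\het$ is a Banach algebra, $M\dot w$ has gradient in $C_\het^{1+\beta}$.
	\item \emph{Normalization.} The factor $1\mp\eta$ vanishes at $\eta = \pm1$, so $M\dot w(0,\pm1) = 0$.
	\item \emph{Neumann condition.} Differentiating in $\eta$,
	\[
		\partial_\eta (M\dot w) = \mp\frac{\partial_\eta u}{\partial_\eta f_{2\pm}}\dot w + (1\mp\eta)\,\partial_\eta\!\Big(\frac{\partial_\eta u}{\partial_\eta f_{2\pm}}\Big)\dot w .
	\]
	At $\eta=\pm1$ the second term vanishes because of the factor $1\mp\eta$, and the first vanishes because $\partial_\eta u = 0$ there. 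Hence $\partial_\eta(M\dot w) = 0$ on $\Gamma_0^\pm$.
	\item \emph{Stream function version.} For $\tilde{\mathcal{T}}$ the argument is identical. The only boundary requirement is $\tilde v|_{\Gamma_0^\pm}=0$, which again follows from the factor $1\mp\eta$. Membership in $C^{2+\beta}_\het$ holds as in (i).
\end{enumerate}

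For part~\ref{equivalence of linearizations part}, I would use the one-parameter family viewpoint described before the lemma. Take $(u(s),w(s))$ with derivative $\mathcal{T}(u,w)(\dot v,\dot w)$ at $s=0$, and set $\phi(s) = (u(s)+U\xi)\circ f(s)^{-1}$. By the computation preceding the lemma, $\dot\phi\circ f = \dot v$. For the bulk row, write $\F(u(s),w(s);0,0) = (\Delta\phi(s))\circ f(s)$ and differentiate:
\[
	\frac{d}{ds}\Big|_{s=0} (\Delta\phi(s))\circ f(s) = (\Delta\dot\phi)\circ f + \big((\nabla\Delta\phi)\circ f\big)\dot f = \Delta^w\dot v ,
\]
since $\Delta\phi \equiv 0$ at the base point. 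This gives the first displayed identity, and the same computation with $\psi$ gives the second.

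The boundary rows need more care. For $\G_1,\G_2,\G_3$ I would again differentiate in physical coordinates, where the free boundary moves by $\dot h = \dot w$. This yields expressions in $\nabla\dot\phi\circ f$, $\dot w$, $\partial_\xi\dot w$ and traces of derivatives of the base $\phi$. Examples are the linearized kinematic condition
\[
	\partial_\nu\dot\phi = \partial_x\big(\dot h\,\partial_x\phi\big) \quad\text{on } \fluidS,
\]
and the linearized Bernoulli condition, which involves $\nabla\phi\cdot\nabla\dot\phi$, the term $\dot h\,\partial_y(\tfrac12|\nabla\phi|^2)$, and $(\rho_0-1)\dot h/F^2$.

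The analogous computation on the stream side uses $\dot\psi\circ f = \mathcal{C}^w\dot v$ (the constant $U$-terms drop out under linearization). It gives, for instance, the linearized version of $\psi=0$ on $\fluidS$, namely $\partial_x\big(\dot\psi+\dot h\,\partial_y\psi\big)=0$ there. By Lemma~\ref{correspondence lemma}, $\nabla\dot\psi = \nabla^\perp$-related to $\nabla\dot\phi$, so $\partial_\tau\dot\psi = \pm\partial_\nu\dot\phi$ and $\partial_\nu\dot\psi = \mp\partial_\tau\dot\phi$, while $\partial_y\psi = -\partial_x\phi$. Substituting these relations should match each row of the $\psi$-linearization with the corresponding row of the $\phi$-linearization. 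The gradient-squared terms in the Bernoulli row are invariant under the rotation, and the gravity terms coincide.

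I expect the main obstacle to be exactly this boundary matching. The stream function kinematic operator $\partial_\xi\tilde u_\pm|_{\Gamma_1}$ is a tangential derivative, whereas $\G_{1,2}$ are non-normalized normal derivatives. Showing that they agree after applying $\mathcal{T}$ and $\tilde{\mathcal{T}}$ requires the Cauchy--Riemann relations, the base-point kinematic condition, and a careful accounting of the Jacobian factors $1+(\partial_\xi w)^2$ and $\partial_\eta f_{2\pm}$. If the rows turn out to agree only up to an invertible multiplicative factor or a reordering, I would absorb that into the statement, since only the isomorphism property is ultimately needed downstream.
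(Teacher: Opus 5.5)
This follows the paper's argument: part (a) is the same explicit-inverse check, and part (b) is the Wahl\'en--Varholm chain-rule computation that the paper cites but omits; you also note that $\mathcal{T}(u,w)$ needs $\partial_\eta u$ one derivative smoother than membership in $\Xspace_\het$ provides, which the paper glosses over too, and that extra regularity must come from the regularity of the front itself (e.g.\ the analyticity in Theorem~\ref{large incompressible fronts theorem}, upgraded to uniform bounds) rather than from scalar elliptic regularity for $\Delta^w u=0$ with $w$ merely in $C^{2+\beta}$. Your closing hedge is in fact necessary: the bulk, $\G_2$ and $\G_3$ rows agree exactly, but $\partial_\xi$ along $\Gamma_1$ pulls back the same tangential derivative $\partial_x + h'\partial_y$ in both layers while $\nu_+$ points downward and $\nu_-$ upward, so the $\tilde\G_1$ component of the left-hand side is $-1$ times the $\G_1$ component of the right-hand side, which is harmless for Lemma~\ref{nondegeneracy front lemma} since negating one component is an automorphism of $\Yspace_\het$.
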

\begin{proof}
Consider first the statements in part~\ref{good unknown isomorphism part}. It is easy to check that these are indeed bounded mappings between the stated spaces. In particular, for $(\dot v, \dot w) \in \Xspace_{\het}$ we have that $(\dot u, \dot w) \colonequals \mathcal{T}(u,w)(\dot v, \dot w)^T$ satisfies
\[
	\dot u(0,\pm 1) = \left( \dot v + \frac{ (1\mp \eta) (\partial_\eta u)}{\partial_\eta f_{2\pm}}  \dot w \right)(0,\pm 1) = 0, 
\]
as the second term vanishes identically on $\Gamma_0^\pm$ and the first is in $\Xspace_{1\het}$. Likewise,
\[
	\partial_\eta \dot u|_{\Gamma_0^\pm} = \left(\partial_\eta \dot v \mp \frac{\partial_\eta u}{\partial_\eta f_{2\pm}} \dot w \right)\Big|_{\Gamma_0^\pm} = 0,
\]
again thanks to the fact that $\dot v$ and $u$ are in $\Xspace_{1\het}$. In fact, $\mathcal{T}_\pm(u,w)$ has the explicit inverse
\[
	\mathcal{T}_\pm(u,w)^{-1} \begin{pmatrix} \dot u \\ \dot w \end{pmatrix} = \begin{pmatrix} \dot u - \frac{(1\mp \eta) (\partial_\eta u)}{\partial_\eta f_{2\pm}} \dot w \\ \dot w \end{pmatrix},
\]
which is clearly bounded. The same basic argument applies for $\tilde{\mathcal{T}}(\tilde u, w)$.  

The identities in part~\ref{equivalence of linearizations part} can be established through an elementary but long calculation following~\cite[Theorem 5]{varholm2020global}, so we omit the details.
\end{proof}

As a consequence of Lemma~\ref{good unknowns lemma}, nondegeneracy of a solution to the stream function formulation~\eqref{tilde F G definition} of the incompressible problem implies the linearized problem at the corresponding solution to $(\F,\G)(\placeholder; 0,0) = 0$ is likewise an isomorphism. 

\begin{lemma}[Nondegeneracy]
\label{nondegeneracy front lemma}
Suppose now that $(\tilde u, w) \in \tilde{\mathcal{O}}_\het$ is a nondegenerate solution of the stream function problem in that
\be
\label{definition nondegenerate stream function}
	(\tilde\F, \tilde\G)(\tilde u, w; d) = 0 ,\qquad D_{(\tilde u,w)} (\tilde\F,\tilde\G)(\tilde u,w;d)   \textrm{ is an isomorphism } \tilde{\Xspace}_\het \to \tilde{\Yspace}_\het. 
\ee
Then, $(u,w) \colonequals (\tilde{\mathcal{C}}^w \tilde u,w) \in \mathcal{O}_\het$ satisfies
\[
	(\F,\G)(u,w; 0,0) = 0, \qquad D_{(u,w)} (\F,\G)(u,w; 0,0) \textrm{ is an isomorphism } \Xspace_\het \to \Yspace_\het.
\] 
\end{lemma}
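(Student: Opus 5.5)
Write $\mathscr{L} \colonequals D_{(u,w)}(\F,\G)(u,w;0,0)$ and $\tilde{\mathscr{L}} \colonequals D_{(\tilde u,w)}(\tilde\F,\tilde\G)(\tilde u,w;d)$. The plan is to conjugate $\mathscr{L}$ to $\tilde{\mathscr{L}}$ using the good unknowns of Lemma~\ref{good unknowns lemma} together with the harmonic conjugation maps of Lemma~\ref{correspondence lemma}.

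First, Lemma~\ref{correspondence lemma}(c), combined with the inverse relations~\eqref{tilde u tilde v inverses}, gives $(\F,\G)(u,w;0,0)=0$ with $\mathcal{C}^w u = \tilde u$. Membership $(u,w)\in\mathcal{O}_\het$ follows because the defining conditions only involve $w$ and the velocity, and $\nabla^w u$ is a rotation of $\nabla^w\tilde u$. Since $\mathcal{T}(u,w)$ is an isomorphism by Lemma~\ref{good unknowns lemma}\ref{good unknown isomorphism part}, it suffices to show that $\mathscr{L}\mathcal{T}(u,w)\colon \Xspace_\het\to\Yspace_\het$ is bijective.

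\emph{Injectivity.} Suppose $\mathscr{L}\mathcal{T}(\dot v,\dot w)^T=0$. The first component is $\Delta^w\dot v=0$ by Lemma~\ref{good unknowns lemma}\ref{equivalence of linearizations part}. That lemma then converts the relation into
\[
\tilde{\mathscr{L}}\,\tilde{\mathcal{T}}(\tilde u,w)(\mathcal{C}^w\dot v,\dot w)^T=0.
\]
One must check that $\mathcal{C}^w\dot v\in\tilde\Xspace_{1\het}$, i.e.\ that it vanishes on $\Gamma_0^\pm$ and on the upstream side. Vanishing on $\Gamma_0^\pm$ is immediate from the lower limit $\pm1$ in~\eqref{definition Cw}, since here we take the linear part of $\mathcal{C}^w$ without the $U\xi$ terms. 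Nondegeneracy of $\tilde{\mathscr{L}}$ and invertibility of $\tilde{\mathcal{T}}$ then give $\mathcal{C}^w\dot v=0$ and $\dot w=0$. Applying $\tilde{\mathcal{C}}^w$ via~\eqref{tilde u tilde v inverses} yields $\dot v=0$, where the normalization $\dot v(0,\pm1)=0$ fixes the additive constant.

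\emph{Surjectivity.} Given $(Y_1,\dots,Y_4)\in\Yspace_\het$, first solve $\Delta^w\dot v_p = Y_1$ in $\confD_\pm$ with homogeneous Neumann data on $\Gamma_0^\pm$ and on $\Gamma_1$. The solution $\dot v_p$ should lie in $\dot C_\het^{2+\beta}$; this is an invertible scalar elliptic problem modulo constants, and solvability needs a compatibility adjustment that is absorbed by allowing linear growth downstream. Subtracting $\mathscr{L}\mathcal{T}(\dot v_p,0)^T$ reduces the problem to data with $Y_1=0$. For such data, solve
\[
\tilde{\mathscr{L}}(\dot{\tilde u},\dot w)=(0,Y_2',Y_3',Y_4'),
\]
set $(\dot{\tilde v},\dot w)=\tilde{\mathcal{T}}^{-1}(\dot{\tilde u},\dot w)$ so that $\Delta^w\dot{\tilde v}=0$, and then put $\dot v\colonequals\tilde{\mathcal{C}}^w\dot{\tilde v}$. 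Lemma~\ref{good unknowns lemma}\ref{equivalence of linearizations part} transports the equation back.

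\emph{Main obstacle.} The function-space bookkeeping is the hard part. Lemma~\ref{correspondence lemma} is stated for harmonic functions in its nonlinear setting, but here one needs its linear analogues: $\mathcal{C}^w$ and $\tilde{\mathcal{C}}^w$ must map $\Delta^w$-harmonic elements of the \emph{linearized} spaces into each other. Concretely, for $\dot v$ with only Neumann data on $\Gamma_0^\pm$ (with $\Gamma_1$ data coming from $Y_2,Y_3$), one must show $\mathcal{C}^w\dot v$ vanishes on all of $\Gamma_0^\pm$ and has the correct far-field limits in $C_\het$. Likewise, one must show $\tilde{\mathcal{C}}^w\dot{\tilde v}$ has gradient in $C_\het^{1+\beta}$ even though $\dot{\tilde v}$ is merely bounded. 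I would handle this directly from~\eqref{definition Cw} and~\eqref{definition tilde Cw} using the Cauchy--Riemann relations in Lemma~\ref{correspondence lemma}(a) and the uniform far-field limits of the coefficients of $f$. I would also verify that the first-order boundary rows $\G_1,\G_2$ correspond to the tangential rows $\tilde\G_1,\tilde\G_2$ through those same relations.
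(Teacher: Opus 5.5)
Your architecture matches the paper's. The kernel is killed by transporting $\mathscr{L}\mathcal{T}(u,w)$ to $\tilde{\mathscr{L}}\tilde{\mathcal{T}}(\tilde u,w)$ via Lemma~\ref{good unknowns lemma}\ref{equivalence of linearizations part} and then using nondegeneracy. Surjectivity comes from a particular solution for $Y_1$, followed by inverting the remaining boundary problem through the stream function formulation. Your bookkeeping for the linear versions of $\mathcal{C}^w$ and $\tilde{\mathcal{C}}^w$ fills in what the paper leaves implicit.

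\textbf{The gap is the particular solution.} You impose homogeneous Neumann data on $\Gamma_1$ as well as on $\Gamma_0^\pm$. You then assert the problem is invertible modulo constants, with the compatibility defect absorbed by linear growth downstream. This is false. In the lower layer set $V \colonequals \dot v_p\circ f^{-1}$ and $Z \colonequals Y_1\circ f^{-1}$. Then $\Delta V = Z$ in $\fluidD_-$, and $\partial_y V = 0$ both on $\{y=0\}$ and on $\fluidS$. Differentiating the cross-sectional flux gives
\[
	\frac{d}{dx}\int_0^{h(x)}\partial_x V(x,y)\,dy = \int_0^{h(x)} Z(x,y)\,dy + h'(x)\,\partial_x V\big(x,h(x)\big).
\]
The last term is absent if you use the conormal condition instead. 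Elements of $\Yspace_{1\het}$ need only have a limit downstream, so $Y_1$ may tend to $1$ there, say. Since $h'\to 0$, boundedness of $\nabla V$ would force the right-hand side to tend to a nonzero constant. Then $\int_0^h\partial_x V\,dy$ grows linearly, contradicting $\nabla\dot v_p\in C^{1+\beta}_\het$.

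The reason is that with Neumann conditions on both boundary components of a layer, the cross-sectional operator has the constants in its kernel, so there is no spectral gap. Linear growth of $\dot v_p$ itself, which $\dot C^{2+\beta}_\het$ permits, only absorbs a finite total flux $\iint Z \neq 0$. It cannot absorb a cross-sectional integral of $Y_1$ that is nonzero downstream.

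\textbf{The fix is to impose nothing on $\Gamma_1$.} Your own subtraction of $\mathscr{L}\mathcal{T}(\dot v_p,0)^T$ already moves whatever traces $\dot v_p$ leaves on $\Gamma_1$ into $(Y_2,Y_3,Y_4)$. This is what the paper does. It extends $Y_{1\pm}\circ f^{-1}$ to the whole channel $\{0<y<1\}$. It then solves the mixed problem with Dirichlet data on $\fluidB_\mp$ and Neumann data on $\fluidB_\pm$. That problem has cross-sectional eigenvalues bounded away from zero, so its Green's function decays exponentially in $|x-x'|$, and all derivatives up to order two vanish upstream and have downstream limits. Finally, the paper restricts to $\confD_\pm$ and subtracts the value at $(0,\pm 1)$ to land in $\Xspace_{1\het}$. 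With that replacement, the rest of your argument goes through as in the paper.
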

\begin{proof}
From Lemma~\ref{correspondence lemma} and Lemma~\ref{good unknowns lemma}, it follows that $(\F,\G)(u,w; 0,0) = 0$ and the kernel of $D_{(u,w)} (\F,\G)(u,w;0,0)$ is trivial. It remains only to show that $D_{(u,w)} (\F,\G)(u,w;0,0)$ is surjective. With that in mind, let $Y = (Y_1, Y_2, Y_3,Y_4) \in \Yspace_\het$ be given; we must prove that there exists $(\dot v, \dot w) \in \Xspace_\het$ so that 
\[
	D_{(u,w)} (\F,\G)(u,w;0,0) \mathcal{T}(u,w) 
		\begin{pmatrix} 
			\dot v \\ 
			\dot w 
		\end{pmatrix} 
		= Y. 
\]

First, we wish to solve the inhomogeneous problem in the interior with simplified boundary conditions. We claim that there exists $\dot v_1 \in \dot C_\het^{2+\beta}(\overline{\confD_+}) \cap  \dot C_\het^{2+\beta}(\overline{\confD_-})$ such that
\be
\label{inhomogeneous Poisson equation}
	\left\{ \begin{aligned}
		\Delta^w \dot v_1 & = Y_1 & \qquad & \textrm{in } \confD_+ \cup \confD_- \\
		\partial_\eta \dot v_1 & = 0 &\qquad& \textrm{on } \Gamma_0^+ \cup \Gamma_0^-.
	\end{aligned} \right.
\ee
To see this, consider first the problem in the physical variables. Since $w \in C^{2+\beta}_\het(\mathbb{R})$, we can extend $Y_{1\pm} \circ f^{-1}$ to a function $Z^\pm \in C_\bdd^{\beta}(\overline{\fluidD})$ vanishing in a neighborhood of $\fluidB_\mp$ and in the upstream limit $x \to -\infty$. The Laplacian in $\fluidD$ can be inverted explicitly using an integral kernel. Indeed, setting 
\[
	V^\pm(x,y)  \colonequals \int_{-\infty}^\infty \int_{0}^1  {G}^\pm(x,y; \, x^\prime, y^\prime) Z^\pm(x^\prime, y^\prime) \, d y^\prime \, dx^\prime,
\]
for the functions ${G}^\pm$ given by
\begin{align*}
	{G}^+(x,y; x^\prime, y^\prime) & \colonequals \frac{1}{4\pi} \log{\left( \frac{\cosh{(\frac{\pi}{2} (x-x^\prime))} - \cos{(\frac{\pi}{2} (y+y^\prime))}}{\cosh{( \frac{\pi}{2} (x-x^\prime))} - \cos{(\frac{\pi}{2} (y-y^\prime))}}\right)} \\
	{G}^-(x,y; x^\prime, y^\prime) & \colonequals \frac{1}{4\pi} \log{\left( \frac{\cosh{(\frac{\pi}{2} (x-x^\prime))} - \cos{(\frac{\pi}{2} (y+y^\prime))}}{\cosh{(\frac{\pi}{2} (x-x^\prime))} - \cos{(\frac{\pi}{2} (y-y^\prime))}}\right)},
\end{align*}
it is easy to confirm that $V^\pm \in C_\bdd^{2+\beta}(\overline{\fluidD})$,  and
\[
	\left\{ \begin{aligned}
		\Delta {V}^\pm &  = Z^\pm &\qquad& \textrm{in } \fluidD \\
		 V^\pm & = 0 &\qquad& \textrm{on } \mathscr{B}_\mp \\
		 \partial_y V^\pm & = 0 & \qquad & \textrm{on } \mathscr{B}_\pm.
	\end{aligned} \right.
\]
The exponential decay of the kernels $G^\pm$ in the $x$- and $x^\prime$-directions, moreover, ensures that
\[
	\lim_{x \to - \infty} \partial^\gamma V^\pm(x, \placeholder) = 0\quad \textrm{for all } |\gamma| \leq 2, \quad \textrm{and } \lim_{x \to +\infty} \partial^\gamma V^\pm(x,\placeholder) \textrm{ exists for all } |\gamma| \leq 2.
\]

Now, taking $v^\pm \colonequals V^\pm \circ f |_{\confD_\pm}$, we find that $\Delta^w v^\pm = Y_1$ in $\confD_\pm$ , and $\partial_\eta v^\pm = 0$ on $\Gamma_0^\pm$. 
Finally, let $\dot v_1$  be the function whose restriction to $\confD_\pm$ is $v^\pm - v^\pm(0,\pm1)$; this choice furnishes a solution to the Poisson problem~\eqref{inhomogeneous Poisson equation} lying in the space $\dot C_\het^{2+\beta}(\overline{\confD_+}) \cap  \dot C_\het^{2+\beta}(\overline{\confD_-})$. 

On the other hand, Lemma~\ref{good unknowns lemma}~\ref{equivalence of linearizations part} and the nondegeneracy of $(\tilde u,w)$ also ensures the existence of a unique $(\dot v_2, \dot w_2) \in \Xspace_\het$ such that 
\[
	 D_{( u, w)} ( \F,  \G)( u, w;0,0) {\mathcal{T}}(u, w) \begin{pmatrix} \dot{v}_2 \\ \dot w_2 \end{pmatrix} = \begin{pmatrix} 0 \\ Y_2 \\ Y_3 \\ Y_4 \end{pmatrix} 
	 - 
	 \begin{pmatrix} 
	 	0 \\
		D_{(u, w)} \G(u,w;0,0) {\mathcal{T}}( u, w) \begin{pmatrix} \dot v_1 \\ 0 \end{pmatrix} 
	\end{pmatrix}.
\]
Simply taking $\dot v = \dot v_1 + \dot v_2$ and $\dot w = \dot w_2$ completes the proof of the claim and therefore the lemma. 
\end{proof}

\subsection{Existence theory}

We are now prepared to prove our main theorem on the existence of lightly compressible fronts. First, we state this as a (rigorous) Janzen--Rayleigh expansion of a given nondegenerate solution of the incompressible problem~\eqref{stream function problem}. 

\begin{theorem}[Janzen--Rayleigh expansion for internal fronts]
\label{front theorem}
Suppose that for a fixed $\rho_0$ and $d$, $(u_0,w_0) \in \mathcal{O}_\het$ is a solution to the incompressible internal front problem~\eqref{flattened equations} which is nondegenerate in that $(\mathcal{C}^w u_0, w_0)$ is a nondegenerate solution of the corresponding stream function problem~\eqref{definition nondegenerate stream function}. There exists a two-dimensional real-analytic manifold $\mathscr{M}_\het^\loc$ of solutions to the compressible internal front problem \eqref{steady two-phase system} with the global parameterization
\[
	\mathscr{M}_\het^\loc = \left\{ (u^{\mu,\delta}, w^{\mu,\delta}; \mu,\delta) \in \Xspace_\het \times \mathbb{R} \times \mathbb{R} : |\mu| < \mu_0,~|\delta|< \delta_0 \right\} \subset \mathcal{O}_\het \cap (\F, \G)^{-1}(0),
\] 
for some $\mu_0 > 0$ and $\delta_0 > 0$. The solutions along $\mathscr{M}_\het^\loc$ admit the infinite power series representation 
\be
\label{JRE front}
\begin{aligned}
	 u^{\mu,\delta} & = u_0 + \sum_{m+n \geq 1} \mu^m \delta^n u_{m,n}, \qquad w^{\mu,\delta} = w_0 + \sum_{m+n \geq 1} \mu^m \delta^n w_{m,n} \\
	 (U_+^\dagger)^{\mu,\delta} & = U \frac{1-d}{1-d_0^\dagger} + \mu \frac{\rho_0 U (1-d)(d-d_0^\dagger)\left( F^2 U^2 (d+d_0^\dagger-2) - (1-d_0^\dagger)^2 \right)}{2 F^2 (1-d_0^\dagger)^3}  + O(\mu^2 + \delta) \\
	 (U_-^\dagger)^{\mu,\delta} & = U\frac{d}{d_0^\dagger} + \mu \frac{Ud \left( F^2 U^2 d^2 -F^2 U^2 (d_0^\dagger)^2 - d (d_0^\dagger)^2 + (d_0^\dagger)^3 \right)}{2F^2 (d_0^\dagger)^3} +O(\mu^2+\delta),
	 \end{aligned}
\ee
where $F^2 = \frac{1-\sqrt{\rho_0}}{1+\sqrt{\rho_0}}$ and $d_0^\dagger = 1/(1+\sqrt{\rho_0})$ are the Froude number and downstream depth for the incompressible case, and the leading-order terms $(u_{1,0}, w_{1,0})$ are the unique solutions in $\Xspace_\het$ of the linear problem 
\[
	D_{(u,w)}(\F,\G)(u_0,w_0;0,0) \begin{pmatrix} u_{1,0} \\ w_{1,0} \end{pmatrix} = -D_\mu (\F,\G)(u_0,w_0;0,0).
\]
\end{theorem}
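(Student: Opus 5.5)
The plan is to apply the real-analytic implicit function theorem to the mapping $(\F,\G)\colon \mathcal{O}_\het \subset \Xspace_\het \times \mathbb{R}^2 \to \Yspace_\het$ at the base point $(u_0,w_0;0,0)$, treating $(\mu,\delta)$ as two independent parameters.

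The first step is to check that $(\F,\G)$ is well defined and real analytic on a neighborhood of $(u_0,w_0;0,0)$. The nonlinearities are rational in $\nabla u_\pm$, $w$, $\partial_\xi w$, and in $\partial_\eta f_{2\pm}$, which is bounded away from zero near $w_0$. They are polynomial in $(\mu,\delta)$. Because $C_\het^{k+\beta}$ is a Banach algebra, and because $\nabla u_\pm \in C_\het^{1+\beta}$ for $(u,w)\in\Xspace_\het$, each component lands in the appropriate $C_\het$ space. The limits as $\xi\to\pm\infty$ exist since every coefficient has limits there. The one point needing care is the upstream vanishing required by $C_\het$. It holds because the constant terms, such as $\frac{\rho_0+\delta-1}{2}U^2$ in $\G_3$ and the $\mu\,\mathfrak{D}^w_u$ contributions to $\F$, cancel exactly for the upstream trivial state $u=0$, $w=0$. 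Here I use that the explicit value of $Q$ was built into $\G_3$ via \eqref{front Q relation}. The upstream trivial solution makes every component vanish for all $(\mu,\delta)$, so uniform convergence of $\nabla u$ and $w$ upstream gives decay of the residuals.

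The second step is the isomorphism property. By hypothesis $(\mathcal{C}^w u_0,w_0)$ is nondegenerate for the stream-function problem, so Lemma~\ref{nondegeneracy front lemma} gives that $D_{(u,w)}(\F,\G)(u_0,w_0;0,0)\colon\Xspace_\het\to\Yspace_\het$ is an isomorphism. The analytic implicit function theorem then yields $\mu_0,\delta_0>0$ and a real-analytic map $(\mu,\delta)\mapsto(u^{\mu,\delta},w^{\mu,\delta})$ whose graph is the whole zero set near the base point. This graph is the manifold $\mathscr{M}^\loc_\het$, and membership in the open set $\mathcal{O}_\het$ is preserved for small parameters. Convergence of the Taylor series gives \eqref{JRE front} for $(u,w)$. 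Differentiating $(\F,\G)(u^{\mu,0},w^{\mu,0};\mu,0)=0$ in $\mu$ at the origin gives the stated linear problem for $(u_{1,0},w_{1,0})$, with uniqueness coming from the isomorphism.

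The final step is the expansions of $U_\pm^\dagger$. Two routes are available: read them off from $\lim_{\xi\to+\infty}\partial_\xi u_\pm$, or, more concretely, use the conjugate flow equations of Corollary~\ref{conjugate flow corollary}. I will take the second route. Since $d$ is fixed, $d^{\dagger}=d+\lim_{\xi\to+\infty}w^{\mu,\delta}$ depends analytically on $(\mu,\delta)$ with $d^\dagger=d_0^\dagger+O(\mu+\delta)$. The mass-flux relations \eqref{conservation of mass flux} are solvable for $U_\pm^\dagger$ by the injectivity noted in Remark~\ref{conjugate flow remark}. Expanding them to first order in $\mu$ gives, for the lower layer,
\[
U_-^\dagger\Big(1-\tfrac{\mu}{2}(U_-^\dagger)^2\Big)\approx U\tfrac{d}{d^\dagger}\Big(1-\tfrac{\mu}{2}U^2-\tfrac{\mu(d-d^\dagger)}{2F^2}\Big),
\]
and the upper layer is analogous with the factor $\rho_0\mu$. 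The first-order correction to $d^\dagger$ must also be determined. I plan to obtain it from the energy and flow-force equations \eqref{conservation of energy}, \eqref{conservation of flow force}, expanded to first order, with the Froude number fixed at its incompressible value.

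I expect this algebraic bookkeeping to be the main obstacle. In particular, I must confirm that the $O(\mu)$ shift in $d^\dagger$ either cancels or is already absorbed so as to produce exactly the displayed coefficients. If it does not cancel, I would recheck the coefficients by direct series expansion using a computer algebra check.
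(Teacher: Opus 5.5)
Your existence argument (analyticity of $(\F,\G)$ near $(u_0,w_0;0,0)$, the isomorphism from Lemma~\ref{nondegeneracy front lemma}, the analytic implicit function theorem, and differentiation in $\mu$ to get the problem for $(u_{1,0},w_{1,0})$) is the paper's argument. Your linearized lower-layer mass-flux identity is also exactly the one the paper differentiates. The gap is the last step. You leave $\partial_\mu d^\dagger|_{\mu=0}$ undetermined, and the displayed $O(\mu)$ coefficients of $U_\pm^\dagger$ are precisely what your two mass-flux expansions give when that derivative is set to zero. The paper does no more than this: it invokes $\partial_\mu (d^\dagger)^{\mu,0}|_{\mu=0}=0$ without justification. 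Its own remark after the theorem says that even the $O(\mu)$ term of $d^\dagger$ needs a second-order expansion, which sits badly with that assertion. So your proposal, as written, does not establish the displayed formulas, and the paper offers no argument you could borrow.

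Moreover, your intended route will not produce a zero. At $\delta=0$ the energy relation \eqref{conservation of energy} contains no $\mu$. Differentiating it along the family and inserting the mass-flux expansions gives
\[
C\,\partial_\mu d^\dagger = U_-^\dagger \dot U_-^\dagger - \rho_0 U_+^\dagger \dot U_+^\dagger, \qquad C = \frac{\rho_0(1-d)^2}{(1-d_0^\dagger)^3}+\frac{d^2}{(d_0^\dagger)^3}-\frac{1-\rho_0}{F^2} = \frac{(1+\sqrt{\rho_0})^4}{\sqrt{\rho_0}}\,(d-d_0^\dagger)^2 .
\]
Here $U_\pm^\dagger$ are the incompressible values and $\dot U_\pm^\dagger$ are the displayed coefficients. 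Since $C\neq 0$ for a genuine front, $d_0^\dagger$ is a simple root and the energy relation alone fixes $\partial_\mu d^\dagger$. It vanishes only if $\rho_0 U_+^\dagger\dot U_+^\dagger = U_-^\dagger\dot U_-^\dagger$, and this fails. For $\rho_0=1/4$, $d=1/2$ (so $F^2=1/3$, $d_0^\dagger=2/3$, $U=-1$), the displayed values are $\dot U_+^\dagger=-21/64$ and $\dot U_-^\dagger=-3/128$. These give $63/512\neq 9/512$, hence $\partial_\mu d^\dagger=-3/8$.

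Consequently, along any analytic family of fronts with $d$ and $F$ held fixed, the first-order coefficients must carry the extra terms $-\frac{Ud}{(d_0^\dagger)^2}\partial_\mu d^\dagger$ and $+\frac{U(1-d)}{(1-d_0^\dagger)^2}\partial_\mu d^\dagger$. The flow-force relation then becomes a compatibility condition rather than a source of information. The displayed expansion therefore cannot be justified by your computation, nor by the paper's.

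Separately, and as a defect shared with the paper: the front problem is translation invariant and $\Xspace_\het$ contains no phase condition. So $(\partial_\xi u_0-\partial_\xi u_0(0,\pm 1),\partial_\xi w_0)$ lies in the kernel of the linearization. The isomorphism you import from the lemma needs a symmetry-breaking normalization before the implicit function theorem step is meaningful.
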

\begin{proof}
The existence of the manifold $\mathscr{M}_\het^\loc$ follows from Lemma~\ref{nondegeneracy front lemma} and the (analytic) implicit function theorem. To derive the specific Janzen--Rayleigh asymptotics claimed in~\eqref{JRE front}, we do an expansion of the conjugate flow equations in Corollary~\ref{conjugate flow corollary} at the starting incompressible flow. 

Set $\delta = 0$, and let $\dot U_\pm^\dagger \colonequals \partial_{\mu}|_{\mu = 0} (U_\pm^\dagger)^{\mu,0}$. Consider first the conservation of mass flux in the lower layer in~\eqref{conservation of mass flux}, which here implies
\[
		 U e^{-\frac{\mu}{2} U^2  } \frac{ 1 - e^{- \frac{\mu d}{F^2}} }{1- e^{- \frac{\mu d^\dagger}{F^2} }}  =
	U_-^\dagger e^{-\frac{\mu}{2} (U_-^\dagger)^2 } \qquad \textrm{for } d^\dagger = (d^\dagger)^{\mu,0}, \quad U_-^\dagger = (U_-^\dagger)^{\mu,0},\quad 0 \leq \mu \ll 1.
\]
Note that we already know that $(U_-^\dagger)^{0,0} = U d/d_0^\dagger$ by the conjugate flow equations in the incompressible case; see Remark~\ref{conjugate flow remark}. Then, differentiating both sides of the above identity in $\mu$ and setting $\mu = 0$ (and using that $\partial_\mu (d^\dagger)^{\mu,0}|_{\mu=0} = 0$), we eventually arrive at 
\[
	U \frac{d}{d_0^\dagger} \left( \frac{d_0^\dagger -d}{2F^2} - \frac{U^2}{2} \right) = \dot U_-^\dagger - \frac{1}{2} \left(U \frac{d}{d_0^\dagger} \right)^3. 
\]
Rearranging this and simplifying leads to the $O(\mu)$ term for the expansion of $U_-^\dagger$ in~\eqref{JRE front}. The same type of manipulation applied to the conservation of mass flux in the upper layer gives the corresponding formula for $\dot U_+^\dagger$.
\end{proof}
\begin{remark}
In principle, one can also determine asymptotic expansions (to any order) for the downstream depth $(d^\dagger)^{\mu,\delta}$ of the corresponding fronts along $\mathscr{M}_\het^\loc$. However, because $d_0^\dagger$ is a double root of conservation of flow force equation when $\mu = 0$, it turns out that computing even the $O(\mu)$ term in the expansion of $(d^\dagger)^{\mu,\delta}$ requires expanding the compressible conjugate flow system to at least second order. The resulting equations are complicated enough as to be uninstructive, so we do not provide them here. In the same way, one can also expand all of the solutions in $\delta$.
\end{remark}

Applying this result along each distinguished arc of the global family $\km_\het$ of incompressible elevation and depression bores from \cite{chen2023global,chen2025extreme}, we immediately obtain the following existence result for large-amplitude lightly compressible fronts.

\begin{corollary}[Manifolds of lightly compressible fronts]
\label{front corollary}
Let $\{\mathscr{A}_j^\het\}$  be the (at most countable) distinguished arcs of the global family of incompressible fronts $\km_\het$ given in Theorem~\ref{large incompressible fronts theorem}. There exists a corresponding at most countable ordered list $\{ \mathscr{M}_j^\het\}$ of three-dimensional real-analytic manifolds of compressible fronts. Each $\mathscr{M}_j^\het$ has the structure of a (real-analytic) fiber bundle over the base $\mathscr{A}_j^\het$.
\end{corollary}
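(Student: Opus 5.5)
The plan is to mirror the proof of Corollary~\ref{periodic JRE manifold corollary}, replacing Theorem~\ref{periodic JRE theorem} by Theorem~\ref{front theorem}. Take a distinguished arc $\mathscr{A}_j^\het$ of $\km_\het$ from Theorem~\ref{large incompressible fronts theorem}. By the discussion following Theorem~\ref{homoclinic global ift}, it admits a locally real-analytic parameterization $\{(\psi(s),h(s),d(s)) : s\in(s_j,s_{j+1})\}$ consisting of nondegenerate solutions of the stream function problem, in the sense of~\eqref{definition nondegenerate stream function}. For each such $s$, set $w(s) \colonequals h(s)-d(s)$ (pulled back as in~\eqref{definition flattening transform}), define $\tilde u(s)$ via~\eqref{definition tilde u}, and let $u(s)\colonequals\tilde{\mathcal C}^{w(s)}\tilde u(s)$. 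Lemma~\ref{correspondence lemma} shows that $(u(s),w(s))$ is an incompressible front in the potential formulation. Lemma~\ref{nondegeneracy front lemma} then shows that $D_{(u,w)}(\F,\G)(u(s),w(s);0,0)$ is an isomorphism $\Xspace_\het\to\Yspace_\het$.

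Next I would apply the analytic implicit function theorem to $(\F,\G)$, now regarding the upstream depth $d$ as an additional parameter alongside $(\mu,\delta)$. This is exactly the argument of Theorem~\ref{front theorem}, carried out near the point $(u(s_*),w(s_*);d(s_*),0,0)$. Because the linearization at $\mu=\delta=0$ is invertible, we obtain a real-analytic map $(d,\mu,\delta)\mapsto(u,w)$ on a neighborhood. We then restrict it to the analytic curve $d=d(s)$. This yields a chart
\[
(s,\mu,\delta)\mapsto (u(s,\mu,\delta),w(s,\mu,\delta);\mu,\delta),\qquad |s-s_*|+|\mu|+|\delta|\ll1.
\]
Its Janzen--Rayleigh expansion is based at $(u(s),w(s))$, and its image is a three-dimensional real-analytic manifold near the base point. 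The chart is an immersion because $\partial_s$ restricted to $\mu=\delta=0$ is the tangent to the arc, while $\mu$ and $\delta$ enter as independent coordinates.

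Finally, I would patch the charts along $\mathscr{A}_j^\het$ to obtain $\mathscr{M}_j^\het$. Local uniqueness in the implicit function theorem ensures that overlapping charts agree, so the transition maps are real analytic. The projection $(u,w;\mu,\delta)\mapsto$ the point of $\mathscr{A}_j^\het$ obtained by setting $\mu=\delta=0$ is well defined and real analytic. It gives local trivializations $\mathscr{M}_j^\het\cap\mathcal N_*\cong(\text{arc piece})\times\{|\mu|<\mu_0,|\delta|<\delta_0\}$, which is the fiber bundle structure.

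The main obstacle is the first step, because the parameter $d$ varies along $\km_\het$ while the function spaces and the Bernoulli constant in~\eqref{steady two-phase dynamic flat} depend on $d$. The downstream limits $d^\dagger$ and $U_\pm^\dagger$ encoded in $C_\het$ likewise depend on it. I would check that $(\F,\G)$ is jointly real analytic in $(u,w,d,\mu,\delta)$ on a fixed space. This works because $\Xspace_\het$ only requires that downstream limits exist, not that they take prescribed values. A further concern is that the fibers could shrink to zero near the endpoints of the arc. That does not affect the local manifold and bundle structure asserted in the corollary, so I would simply allow $\mu_0$ and $\delta_0$ to depend on $s_*$.
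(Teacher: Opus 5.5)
Your proposal is correct and is essentially the argument the paper intends. The paper obtains the corollary by applying Theorem~\ref{front theorem} (that is, Lemma~\ref{nondegeneracy front lemma} together with the analytic implicit function theorem) at every point of a distinguished arc, exactly as in Corollary~\ref{periodic JRE manifold corollary}; carrying $d$ as an extra implicit-function-theorem parameter just makes explicit the joint analyticity in the arc variable that the paper leaves implicit.

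Two cosmetic remarks. Near a nondegenerate point the incompressible zero-set is a graph over $d$, so it is cleaner to use $d$ itself as the arc coordinate and to record it in the chart, as $\lambda(s)$ is in the periodic case; this makes the graph/immersion property automatic without needing $d'(s_*)\neq 0$. Also, the right-hand side of~\eqref{steady two-phase dynamic flat} is independent of $d$, so that part of your ``main obstacle'' disappears.
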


\section*{Acknowledgments}

The research of SW is supported in part by the National Science Foundation through DMS-2306243 and the Simons Foundation through award 960210.

\bibliographystyle{siam}
\bibliography{projectdescription}

\end{document}